\documentclass[11pt]{amsart}

\usepackage{amssymb}
\usepackage{amsmath}
\usepackage{amsfonts}
\usepackage{graphicx}
\usepackage{graphics}
\usepackage{amsbsy}
\usepackage{amscd}

\usepackage{color}

\usepackage[all,knot,poly]{xy}
\usepackage{ifpdf}
\usepackage{ucs}
\usepackage[utf8x]{inputenc}
\usepackage[T1]{fontenc}
\usepackage{latexsym}
\usepackage{array}
\usepackage{enumerate}

\def\zet{\mathbb{Z}}

\def\zetita{\mathbb{Z}}

\def\TC{\overline{\mathrm{TC}}}
\def\P{\mathrm{P}}


\newtheorem{theorem}{Theorem}[section]
\newtheorem{proposition}[theorem]{Proposition}
\newtheorem{lema}[theorem]{Lemma}
\newtheorem{definition}[theorem]{Definition}
\newtheorem{corolario}[theorem]{Corollary}
\newtheorem{nota}[theorem]{Remark}
\newtheorem{conjetura}[theorem]{Conjecture}

\title[Topological complexity of lens spaces]{Biequivariant maps on spheres and topological complexity of lens spaces}

\author{Jes\'us Gonz\'alez, Maurilio Velasco, and W.~Stephen Wilson}
\thanks{The first author was supported by the Conacyt Grant 102783 during the time this research was conducted.}
\date{\today}

\begin{document}

\begin{abstract}
Weighted cup-length calculations in singular cohomology led Farber and Grant in 2008 to general lower bounds for the topological complexity of lens spaces. We replace singular cohomology by K-theory, and weighted cup-length arguments by considerations with biequivariant maps on spheres to  improve on Farber-Grant's bounds by arbitrarily large amounts. Our calculations are based on the identification of key elements conjectured to generate the annihilator ideal of the toral bottom class in the $ku$-homology of the classifying space for $\zetita_{2^k}\times\zetita_{2^e}$.
\end{abstract}

\maketitle
\tableofcontents

\section{Topological complexity of 2-torsion lens spaces}\label{TCresults}
The concept of topological complexity was
introduced by M.~Farber in~\cite{F} motivated
by one of the most basic problems in robotics: 
given a mechanical system~${\mathcal S}$, one wants to
determine an algorithm or a program capable of taking the system
from any given initial state $A$ to any given final state $B$ under certain
given constraints. Thus, a motion planning program for 
${\mathcal S}$ is a set of rules that
specify a movement of the system from any given initial state to
any other given final state. The problem can be formalized
mathematically in the following way. Let $X$ denote the
configuration space of ${\mathcal S}$, and let $PX$ be the function space of all
continuous paths $\gamma:[0,1] \rightarrow X$ in $X$. There is
a fibration $\pi_X:PX\rightarrow X\times X$ that associates to every
$\gamma\in PX$ the ordered pair formed by the initial and final 
points of $\gamma$, i.e. $\pi_X(\gamma)=(\gamma(0),\gamma (1))$. 
In its most basic form, the motion planning problem in $X$
asks to construct a function $s:X\times X\rightarrow PX$ such that
the composition $\pi_X\circ s$ is the identity; that is, $s$ must be a
cross-section of $\pi_X$. The natural restriction 
that the section $s$ be continuous can only hold when
$X$ is contractible; in general, continuity will hold on
neighborhoods $U_i$ covering $X\times X$, called {\em local domains}, 
on each of which $\pi_X$ should admit a continuous 
local cross-section $s_i$, called
the $i$-th {\em local rule}. Such a set of local domains and local rules
is called a {\em motion planner} in $X$. 

\medskip
The (normalized) topological complexity of $X$,
denoted here by $\TC(X)$, is one less than the lowest possible number of 
local rules among motion planners in $X$. In other words, $\TC(X)$ is 
the normalized Schwartz genus of $\pi_X$ (so, the standard
convention for the Lusternik–Schnirelmann category is now imposed on 
Farber's original definition).

\medskip
The concept of topological complexity captures a number of interesting
phenomena. To begin with, as shown by Farber, $\TC(X)$ gives a sharp
measure of the intrinsic discontinuities in the 
motion planning problem in $X$. On the other hand, $\TC(X)$ depends only 
on the homotopy type of $X$ and, 
since the diagonal $\Delta:X\to X\times X$ is homotopy equivalent 
to $\pi_X$, the weighted cup-length of the zero-divisors in any 
multiplicative cohomology theory of $X$ gives lower bounds for 
$\TC(X)$. The latter is the key observation leading (with singular cohomology) 
to Farber-Grant's general lower bounds for the topological complexity of
lens spaces (Theorem~\ref{fargra} in the appendix).
But most striking is the connection with a classical problem
in differential topology:
For the $n$-dimensional real projective space $\P^n$,~\cite{FTY} shows
\begin{eqnarray}\label{TCImm}
\TC(\P^n)=\mathrm{Imm}(\P^n)
\end{eqnarray}
when $n\neq 1,3,7$. Here $\mathrm{Imm}(\P^n)$ stands for the Euclidean 
immersion dimension of $\P^n$, i.e.~the smallest positive integer $d$ such that
$\P^n$ can be immersed in $\mathbb{R}^d$. 

\medskip
The above property can be extrapolated to the case 
of lens spaces by recalling from~\cite{AGJ} that, for $n\neq 1,3,7$,
$\mathrm{Imm}(\P^n)$ is the smallest positive integer $m$ for which 
there is a $\mathbb{Z}_2$-biequivariant map $S^n\times S^n\to S^m$, 
where $\mathbb{Z}_2$ acts antipodally on each sphere.

\begin{definition}\label{ese}
For positive integers $\hspace{.2mm}n$ and $\hspace{.2mm}t$ consider the standard $\hspace{.1mm}\mathbb{Z}_{t}$-action on the $(2n+1)$-dimensional sphere $S^{2n+1}$, and let $b_{n,t}$ denote the smallest positive integer $m$ for which there is a $\mathbb{Z}_{t}$-biequivariant map $S^{2n+1}\times S^{2n+1}\to S^{2m+1}$.
\end{definition}

Let $L^{2n+1}(t)$ denote the standard $(2n+1)$-dimensional $t$-torsion lens space, the orbit space of the action in Definition~\ref{ese} above.
The main result in~\cite{G} partially extends~(\ref{TCImm}) by showing
\begin{eqnarray}\label{valCT}
\TC(L^{2n+1}(t))=2\hspace{.2mm}b_{n,t}+\varepsilon_{n,t},
\quad\varepsilon_{n,t}\geq0, 
\end{eqnarray} 
where in fact $\varepsilon_{n,t}\in\{0,1\}$ if $t$ is even.\footnote{There is a gap in the proof given in~\cite{G} of the estimates for $\varepsilon_{n,t}$. The first author thanks Jos\'e Garc\'{\i}a-Calcines and Lucile Vandembroucq for noticing the problem and for discussions leading to a fixing of the gap. Details are discussed in the appendix of this paper.}

\medskip
We focus on $b_{n,2^e}$, which will also be denoted by $b(n,e)$. While $b(n,1)$ captures up to parity the (still undetermined) immersion dimension of $\P^{2n+1}$, the function $b(n,e)$ has an easy description for $e$ large enough:~\cite[Proposition~2.2]{G} claims
\begin{equation}\label{topcases}
b(n,e)=\begin{cases}2n,&e>\alpha(n);\\2n-1,&e=\alpha(n),\end{cases}
\end{equation}
where $\alpha (n)$ denotes the number of ones that appear
in the binary expansion of $n$. This paper's goal (Theorem~\ref{CTC} below) 
is to analyze the ``first'' unsolved case in~(\ref{topcases}): 
$e =\alpha(n)-1$. 

\medskip
The main result in~\cite{DD} claims that $P^{2(m+\alpha(m)-1)}$ does not admit an immersion in $\mathbb{R}^{4m-2\alpha(m)}$. Therefore (\ref{TCImm}) and~(\ref{valCT}) imply
\begin{eqnarray}\label{cota}
b(m+\alpha(m)-1,1)\geq2m-\alpha(m).
\end{eqnarray}
More generally, considerations on the immersion dimension of lens spaces led to the following conjecture in~\cite{GZ}:

\begin{conjetura}\label{conejura}
For $1\leq e\leq\alpha(m)$, $b(m+\alpha(m)-e,e)\geq 2m-\alpha(m)+e-1$.
\end{conjetura}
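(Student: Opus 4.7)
The plan is to replace the singular-cohomology weighted cup-length argument underlying Theorem~\ref{fargra} by an analogous calculation in connective $K$-theory $ku$, as anticipated in the abstract. I would argue by contradiction: suppose a $\mathbb{Z}_{2^e}$-biequivariant map $\varphi\colon S^{2n+1}\times S^{2n+1}\to S^{2m'+1}$ exists with $n=m+\alpha(m)-e$ and $m'\le 2m-\alpha(m)+e-2$. Passing to Borel constructions, $\varphi$ would force the vanishing, on $L^{2n+1}(2^e)\times L^{2n+1}(2^e)$, of a certain power of the $ku$-theoretic zero-divisor associated to $\lambda_1\otimes\bar\lambda_2$, where $\lambda_1,\lambda_2$ denote the pullbacks of the canonical line bundle $\lambda\to B\mathbb{Z}_{2^e}$ from the two factors. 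Exhibiting the non-vanishing of the appropriate power would therefore prove the claimed lower bound.

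\medskip

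The heart of the argument is this non-vanishing computation, which I would approach by Kronecker duality with $ku_*(B\mathbb{Z}_{2^k}\times B\mathbb{Z}_{2^e})$ for $k$ sufficiently large that the $(2n+1)$-skeleton captures all the $ku$-homology in the relevant range. Non-vanishing of the cohomological product of length $2m-\alpha(m)+e-1$ is equivalent to this product not lying in the annihilator ideal of the toral bottom class in $ku_*(B\mathbb{Z}_{2^k}\times B\mathbb{Z}_{2^e})$. Using the paper's conjectural identification of generators of that annihilator ideal, the question becomes a combinatorial check on the binary expansion of $m$: the $\alpha(m)$ ones govern how many factors are consumed by annihilator relations, while the shift by $e$ tracks the $2^e$-torsion depth, mirroring the arithmetic underlying the Davis-Davis immersion obstruction~(\ref{cota}).

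\medskip

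The principal obstacle is twofold. First, the annihilator-ideal calculation depends on the conjectural generating set identified in the paper, so the argument is unconditional only in ranges of $(m,e)$ in which that generating set is known to be complete. Second, even granting the generators, one must verify factor by factor that the specific product of length $2m-\alpha(m)+e-1$ survives, and that no shorter product already vanishes modulo the annihilator; this is a delicate arithmetic computation closely modeled on the Davis-Davis argument, with the extra parameter $e$ requiring careful bookkeeping of the $2^e$-torsion contributions. Once these points are settled, the lower bound on $b(m+\alpha(m)-e,e)$ follows directly from the definition of $b_{n,2^e}$, since no $\mathbb{Z}_{2^e}$-biequivariant map of the excluded form could then exist.
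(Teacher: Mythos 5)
The statement you were asked to prove is labeled a \emph{conjecture} in the paper, and the paper does not prove it in general; indeed Remark~\ref{motiv} says only that a $BP$-form of Conjecture~\ref{T2} would be ``an important step'' toward it. So no blind proof should be expected to close unconditionally, and you are right to flag this. Your outline does capture the overall program the paper follows: transport a hypothetical $\mathbb{Z}_{2^e}$-biequivariant map on spheres into a relation in connective $K$-theory, pass to $ku_*$-homology of $B(\mathbb{Z}_{2^k}\times\mathbb{Z}_{2^e})$, and read off a contradiction from the (non)annihilation of the toral class $\tau_{k,e}$, with the binary expansion of $m$ governing the arithmetic.

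There is, however, one substantive over-estimate in your proposal, and it is precisely what lets the paper settle the new case $e=\alpha(m)-1$ \emph{unconditionally}. For the lower bound on $b$, one never needs the full generating set of $\mathrm{Ann}_{ku_*}(\tau_{k,e})$: one only needs to know that a single specific monomial $2^a v_1^b$ does \emph{not} annihilate $\tau_{k,e}$, which is strictly weaker and does not depend on Conjecture~\ref{T2}. That is exactly what Theorem~\ref{contribucion} provides (for $j\le 2$), proved directly from binomial $2$-divisibility (Lemma~\ref{L1}, Propositions~\ref{P1} and~\ref{P2}). The paper's deduction of Theorem~\ref{CTC} then runs through Proposition~\ref{biequi} (translating biequivariance into an extension of the $H$-product on $L^\infty(2^{\alpha(n)-1})$), a diagram chase through $\mathbb{C}P^\infty$ and lens spaces that reduces to showing $2^{\alpha(n)-3}v_1^3x^{n+1}y^{n+1}\ne 0$ in the ring $R_n$ (Proposition~\ref{xnyn}), and finally a comparison map into $ku_*(L^\infty(2^{\alpha(n)-1}))\otimes ku_*(L^\infty(2^{\alpha(n)-1}))$ where the non-annihilation from Theorem~\ref{contribucion} (with $k=e=\alpha(n)-1$, $d=0$) closes the argument. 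Recasting your ``principal obstacle'' as ``one must show $2^{e-2}v_1^{2(k-e)+3}\cdot\tau_{k,e}\ne 0$'' rather than ``one must know the full annihilator ideal'' both sharpens the logic and matches what the paper actually establishes; beyond $e\in\{1,\,2,\,\alpha(m)-1,\,\alpha(m)\}$, the conjecture remains open even granting the approach you describe.
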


This should be considered as an alternative point of view toward an eventual understanding of the intricacies in the ``$\mathrm{TC}$-approach'' to the immersion dimension of {\em odd-dimensional} projective spaces (c.f.~\cite{G}). Namely, although half a century of experience suggests that the numeric value of~(\ref{TCImm}) might look like $2n-k(n)\alpha(n)+o(\alpha(n))$ with $1\leq k(n)\leq 6$, determining the nature of $k(n)$ and the form of $o(\alpha(n))$ is currently a major open task. However, if the ``correcting term'' $\alpha(m)-1$ in the first entry of the $b$-function in~(\ref{cota}) were to be ignored, $k(n)=2$ and $o(\alpha(n))\geq0$ would provide a rather satisfying general lower bound for~(\ref{TCImm}), at least in the case of odd dimensional projective spaces. Conjecture~\ref{conejura} extends such an idealistic bound to (the topological complexity of) $2^e$-torsion lens spaces in such a way that the required correcting term $\alpha(m)-e$ gets smaller as the lens-space torsion increases. For instance, after ignoring the correcting term ``${}+1$'', Theorem~\ref{CTC} below should be thought of as giving the {\em expected} answer for the next case in~(\ref{topcases}).

\medskip
In view of~(\ref{topcases}) and~(\ref{cota}), Conjecture~\ref{conejura}
is true for $e=1$, and sharp for $e=\alpha(m)$. It is also 
known to hold in many other cases with $e=2$ (cf.~\cite[Theorem~2.4]{GZ}).
One of the main achievements of this paper is a proof of 
Conjecture~\ref{conejura} for $e=\alpha(m)-1$, with its corresponding 
application to the topological complexity of lens spaces.

\begin{theorem}\label{CTC}
$b(m+1,\alpha(m)-1)\geq 2m-2$ provided $\alpha(m)\geq2$.
\end{theorem}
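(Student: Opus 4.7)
Write $n = m+1$, $e = \alpha(m)-1$, and $\pi = \mathbb{Z}_{2^e}\times\mathbb{Z}_{2^e}$. The plan is to argue by contradiction using the $ku$-theoretic framework developed in the body of the paper. Assume a $\mathbb{Z}_{2^e}$-biequivariant map $\phi:S^{2n+1}\times S^{2n+1}\to S^{2s+1}$ with $s\leq 2m-3$ exists. The group $\pi$ acts factor-wise on the domain and, via the multiplication $\pi\to\mathbb{Z}_{2^e}$, on the target; biequivariance of $\phi$ is the same as $\pi$-equivariance. Passing to Borel constructions yields a fiberwise map
$$E\pi\times_\pi(S^{2n+1}\times S^{2n+1})\;\longrightarrow\;E\pi\times_\pi S^{2s+1}$$
of sphere bundles over $B\pi$. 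The source restricts, on the natural finite skeleton, to $L^{2n+1}(2^e)\times L^{2n+1}(2^e)$, so the existence of $\phi$ forces the $ku$-theoretic Euler class $\Theta_s\in ku^{2s+2}(B\pi)$ of the target sphere bundle to pull back trivially in $ku^{2s+2}\bigl(L^{2n+1}(2^e)\times L^{2n+1}(2^e)\bigr)$.

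Dualizing by the $ku$-homology/cohomology pairing, this vanishing asserts that $\Theta_s$ lies in the annihilator ideal $\mathrm{Ann}(\tau_n)\subset ku^*(B\pi)$ of the \emph{toral bottom class} $\tau_n\in ku_*(B\pi)$ carried by the $ku$-fundamental class of $L^{2n+1}(2^e)\times L^{2n+1}(2^e)$. Hence the task reduces to proving $\Theta_s\notin\mathrm{Ann}(\tau_n)$ for every $s\leq 2m-3$. The class $\Theta_s$ is the $ku$-Euler class of the linear representation built from the multiplication $\pi\to\mathbb{Z}_{2^e}$; concretely it is an explicit power series in the two Euler classes $\beta_1,\beta_2$ of the $B\mathbb{Z}_{2^e}$ factors, whose lowest-degree homogeneous piece has total degree $s+1$.

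The conclusion then follows from the paper's main computational input, the identification of explicit generators of $\mathrm{Ann}(\tau_n)\subset ku^*(B\pi)$ mentioned in the abstract. Non-membership of $\Theta_s$ in this ideal reduces to a $2$-adic valuation question for binomial coefficients indexed by the binary expansion of $m$, in the spirit of the Davis-Dugger style counts behind~(\ref{cota}). The specific choice $e=\alpha(m)-1$ fixes the binary carry pattern so that at least one monomial of $\Theta_s$ in $\beta_1,\beta_2$ carries an odd coefficient, guaranteeing that $\Theta_s$ escapes the ideal whenever $s\leq 2m-3$. The main obstacle will be exactly this combinatorial verification: exhibiting, uniformly in $s$, a single monomial whose coefficient is odd and which is not expressible through the identified generators of $\mathrm{Ann}(\tau_n)$. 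This is delicate because $e=\alpha(m)-1$ is the borderline regime where the easier slack of~(\ref{topcases}) (the case $e=\alpha(m)$) is no longer available, so the digit-by-digit analysis must be carried out essentially without room to spare.
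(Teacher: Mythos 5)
Your proposal shares the paper's conceptual skeleton — translate the hypothetical biequivariant map into an Euler‑class vanishing in $ku$‑cohomology of a product of finite lens spaces, then contradict this by linking back to the $ku_*$‑annihilator of the toral class — so the route is essentially parallel, not a genuine alternative. That said, the step you call ``dualizing by the $ku$-homology/cohomology pairing'' is misdescribed: the ideal $\mathrm{Ann}_{ku_*}(\tau)$ studied in the paper lives in the coefficient ring $ku_*$, not in $ku^*(B\pi)$, and the bridge from the vanishing of $\Theta_s$ in $R_n\cong ku^{\mathrm{even}}(L^{2n+2}(2^{\alpha(n)-1})\times L^{2n+2}(2^{\alpha(n)-1}))$ to a statement about $\tau$ is an explicit chain of $ku_*$-module maps $R_n\supset M_n\cong M_n'\to M_n''\to ku_*(L^\infty)\otimes ku_*(L^\infty)$ sending $x^{n+1}y^{n+1}$ to the bottom class $(0,0)$; that is exactly what Proposition~\ref{xnyn} builds, and it is not the generic cap-product pairing you invoke.

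The genuine gap is that you stop before the real work and misstate what it must show. The paper's proof rests on computing $x(x+y+v_1xy)^{2n-2}$ in $R_n$ and proving that, up to units, it equals $2^{\alpha(n)-3}v_1^3x^{n+1}y^{n+1}$ (Lemmas~\ref{pbin1}–\ref{pbin2}); combined with Theorem~\ref{contribucion} (which is what actually establishes non-membership in the annihilator, for $e\leq 2$ — the \emph{full} generator set is only conjectural), this yields the contradiction. Your ``exhibit a monomial of $\Theta_s$ with odd coefficient'' is not what happens: no term of the expansion has odd coefficient. Instead, the surviving binomial coefficient $\binom{2n-2}{n-1}$ (for $n$ odd; symmetrically $\binom{2n-2}{n-2}$ for $n$ even) has $2$-adic valuation exactly $\alpha(n)-1$, the other degree-$(2n-1)$ terms vanish for $\nu$-reasons (plus the hypothesis $\alpha(n+1)\ge\alpha(n)$), and two applications of the $2^{\alpha(n)-1}$-series push $2^{\alpha(n)-1}x^ny^{n-1}$ to $2^{\alpha(n)-3}v_1^3x^{n+1}y^{n+1}$ via $\nu\binom{2^{\alpha(n)-1}}{4}=\alpha(n)-3$. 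That two-step formal-group manipulation, together with the separate verification that all of~\eqref{cbin2}–\eqref{cbin4} vanish, is the missing content; without it the argument does not close.
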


\begin{corolario}\label{cotainferior}
$\TC(L^{2m+3}(2^{\alpha(m)-1}))\ge 4m-4$ provided $\alpha(m)\geq2$.
\end{corolario}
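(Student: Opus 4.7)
The plan is to deduce the corollary directly from Theorem \ref{CTC} by feeding it into the equality (\ref{valCT}) from \cite{G}. Concretely, I would set $n = m+1$ and $t = 2^{\alpha(m)-1}$, so that the lens space $L^{2n+1}(t)$ appearing on the left of (\ref{valCT}) is exactly $L^{2m+3}(2^{\alpha(m)-1})$, and the invariant $b_{n,t}$ on the right is, by the notational convention $b(n,e) := b_{n,2^e}$ introduced before Theorem \ref{CTC}, the same as $b(m+1, \alpha(m)-1)$.

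With these substitutions, (\ref{valCT}) reads
\[
\TC\bigl(L^{2m+3}(2^{\alpha(m)-1})\bigr) \;=\; 2\,b(m+1, \alpha(m)-1) \,+\, \varepsilon_{m+1,\,2^{\alpha(m)-1}},
\]
and since the excerpt asserts $\varepsilon_{n,t} \ge 0$ in general (and in fact $\varepsilon_{n,t} \in \{0,1\}$ when $t$ is even, as is the case here), the term $\varepsilon$ can simply be dropped for the purpose of a lower bound. Combined with the inequality $b(m+1, \alpha(m)-1) \ge 2m-2$ furnished by Theorem \ref{CTC}, this yields
\[
\TC\bigl(L^{2m+3}(2^{\alpha(m)-1})\bigr) \;\ge\; 2(2m-2) \;=\; 4m-4,
\]
which is the desired estimate.

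The entire substance of the corollary is thus packed into Theorem \ref{CTC}; there is no independent obstacle to address at this point, as the translation from biequivariant maps to topological complexity is exactly the content of (\ref{valCT}). The only mild caveat worth flagging is that (\ref{valCT}) depends on the estimate for $\varepsilon_{n,t}$ whose proof had a gap in \cite{G}; however, the present argument only uses the nonnegativity $\varepsilon_{n,t} \ge 0$, which is immediate from the definition of $\TC$ as the Schwartz genus, so the corrigendum discussed in the appendix is not needed for this corollary.
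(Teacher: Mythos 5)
Your proof is correct and follows exactly the intended deduction: substitute $n=m+1$, $t=2^{\alpha(m)-1}$ into (\ref{valCT}), drop the nonnegative $\varepsilon$-term, and invoke Theorem~\ref{CTC}. One small clarification: the inequality $\varepsilon_{n,t}\geq0$, i.e.\ $\TC(L^{2n+1}(t))\geq 2b_{n,t}$, is not literally immediate from the Schwartz-genus definition of $\TC$---it is the (easy but nontrivial) content of \cite[Lemma~3.1 and Proposition~3.2]{G}, as the appendix records---but you are right that this part of (\ref{valCT}) is unaffected by the gap in \cite[Lemma~4.1]{G}, so the corollary stands.
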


The proof of Theorem~\ref{CTC} is given in Section~\ref{lasecnueva}. Corollary~\ref{cotainferior} improves by arbitrarily large amounts on Farber-Grant's general lower bound for the topological complexity of lens spaces. Indeed, if $\nu(m)$ stands for the exponent in the largest 2-power dividing $m$ then, as indicated in Proposition~\ref{Compara} in the appendix,~\cite[Theorem~11]{FG} asserts in the case of $L^{2m+3}(2^{\alpha(m)-1})$ that
\begin{equation}\label{previous2FG}
\TC(L^{2m+3}(2^{\alpha(m)-1}))\geq4m-2^{\nu(m)+2}-1
\end{equation}
provided $\alpha(m+1)\geq\alpha(m)$---i.e.~when~(\ref{topcases}) does not apply. But Corollary~\ref{cotainferior} improves the lower bound in~(\ref{previous2FG}) by $2^{\nu(m)+2}-3$ units.

\section{On the annihilator of the toral class in $ku_*(\zet_{2^k}\times\zet_{2^e})$}\label{S1}

Let $ku$ stand for the connective cover of complex $K$-Theory, and write $ku_*X$ for the reduced $ku$-homology of a space (or spectrum) $X$. The obvious projection $\zet \times \zet \rightarrow \zet_{2^k}\times\zet_{2^e}$ determines a stable map $S^2 \rightarrow B( \zet_{2^k}\times\zet_{2^e})$ and a corresponding ``toral'' class $\tau=\tau_{k,e}\in ku_2B(\zet_{2^k} \times \zet_{2^e})$. Note that $\tau$ actually lies in the $ku_*$-direct summand $ku_*B(\zet_{2^k})\otimes_{ku_*}ku_*B(\zet_{2^e})$. 

\medskip
The first result in this section (proved in Section~\ref{S4}) identifies key elements in Ann$_{ku_*}(\tau)$, the $ku_*$-annihilator ideal of $\tau$.

\begin{theorem}
\label{T1}
Let $v_1\in ku_2$ correspond to Bott periodicity. For $k\geq e$ and $\hspace{.4mm}1\leq j\leq e$ consider the elements $$\varepsilon_j =2^{e-j}v_1^{2^{j-1}(k-e+3)-2}\in ku_*.$$ Then $\varepsilon_j\cdot\tau_{k,e}=0$, for $j=0,\ldots,e$, where we set $\varepsilon_0 =2^e$.
\end{theorem}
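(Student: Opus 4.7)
The plan is to reduce to the one-factor case and then leverage the multiplicative formal group law of $ku$. Since $\tau_{k,e}$ lies in the external-product summand $ku_*B\mathbb{Z}/{2^k}\otimes_{ku_*}ku_*B\mathbb{Z}/{2^e}$, one writes $\tau_{k,e}=\alpha_k\times\alpha_e$, where $\alpha_r\in ku_1B\mathbb{Z}/{2^r}$ is the bottom class induced by $B\mathbb{Z}=S^1\to B\mathbb{Z}/{2^r}$; indeed, $\tau_{k,e}$ is realized as the 2-cell of $T^2=B\mathbb{Z}\times B\mathbb{Z}$ under projection. It then suffices to combine annihilator data from the two factors: any $ku_*$-linear combination of products $a_k\cdot a_e$ with $a_r\in\mathrm{Ann}(\alpha_r)$ kills $\tau_{k,e}$.

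The case $\varepsilon_0=2^e$ is immediate from $2^e\alpha_e=0$, a consequence of the degree-$2^e$ attaching map of the 2-cell of $B\mathbb{Z}/{2^e}$. For $1\le j\le e$, the plan is to exploit the complex orientation $ku^*B\mathbb{Z}/{2^r}\cong ku^*[[y_r]]/([2^r](y_r))$, where
\[[2^n](y)=\bigl((1+v_1y)^{2^n}-1\bigr)/v_1=\sum_{k=1}^{2^n}\binom{2^n}{k}v_1^{k-1}y^k.\]
The coefficient of $y^{2^j}$ in $[2^e](y_e)=0$ is $\binom{2^e}{2^j}v_1^{2^j-1}$, which has $2$-adic valuation $e-j$. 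Via Kronecker duality (with appropriate Ext contributions from universal coefficients), this yields elements of $\mathrm{Ann}(\alpha_e)\subseteq ku_*$ in which $j$ factors of $2$ have been traded for $2^j-1$ factors of $v_1$; combining with analogous relations on the $\mathbb{Z}/{2^k}$-side (using $k\ge e$ for sufficient $v_1$-room) gives a candidate for $\varepsilon_j$.

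The main obstacle is to verify that the combined $v_1$-exponent is exactly $2^{j-1}(k-e+3)-2$; this splits as $(2^j-1)+(2^{j-1}(k-e+1)-1)$, recording the $j$-fold FGL trade on the $e$-side and a matching $v_1$-shift on the $k$-side bridging the dimensional gap $k-e$. A natural approach is induction on $j$ via the recursion $2\varepsilon_{j+1}=v_1^{2^{j-1}(k-e+3)}\varepsilon_j$, which immediately gives $2\,(\varepsilon_{j+1}\tau_{k,e})=0$ from $\varepsilon_j\tau_{k,e}=0$; upgrading this $2$-torsion statement to full vanishing requires a secondary divisibility or module-structure argument in the relevant degree of $ku_*B(\mathbb{Z}/{2^k}\times\mathbb{Z}/{2^e})$, which is where the delicate $2$-adic versus $v_1$-adic bookkeeping resides.
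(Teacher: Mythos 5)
There is a fundamental gap at the first step. You write that $\mathrm{Ann}_{ku_*}(\tau_{k,e})$ can be captured by ``combining annihilator data from the two factors'' via products $a_k\cdot a_e$ with $a_r\in\mathrm{Ann}(\alpha_r)$, and later that the formal-group relation $[2^e](y)=0$ ``yields elements of $\mathrm{Ann}(\alpha_e)\subseteq ku_*$ in which $j$ factors of $2$ have been traded for $2^j-1$ factors of $v_1$.'' Neither assertion holds. In $ku_*\Sigma^{-1}B\zet_{2^e}$ the bottom class generates a $ku_*/2^e$-free summand: the standard relations recorded in the paper as~(\ref{ord}) give $2^{e-1}v_1(0)=2^e(1)$ and, iterating, $2^{e-1}v_1^m(0)=2^{e+m-1}(m)\neq 0$ for all $m\geq1$, so $\mathrm{Ann}_{ku_*}(\alpha_e)=(2^e)$ exactly — there is no single-factor ``$2$-for-$v_1$'' trade at all. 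Products of such annihilators only produce $(2^{k+e})$. The elements $\varepsilon_j$ with $j\geq1$ are therefore invisible from either factor alone; they are a genuinely two-variable phenomenon arising in the tensor product $ku_*(k)\otimes_{ku_*}ku_*(e)$, where a relation from the second factor promotes the $2$-order while raising the generator dimension, until the \emph{first} factor's bottom relation $2^k(0)=0$ finally kills the result. This is precisely the mechanism in the paper's derivation~(\ref{EQ}) of the $j=1$ case, and it cannot be disassembled into one-factor annihilation statements.

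The recursion you note, $2\varepsilon_{j+1}=v_1^{2^{j-1}(k-e+3)}\varepsilon_j$, is correct, but as you observe it only yields $2(\varepsilon_{j+1}\tau_{k,e})=0$, and the ``secondary divisibility or module-structure argument'' you defer is exactly where all the content lives — the paper has no such shortcut. Its actual proof works in the free $ku_*$-resolution of $ku_*(k)\otimes_{ku_*}ku_*(e)$, introduces the Smith endomorphisms $\delta_1,\delta_2$ and the power-series ring $T=ku_*[[\delta_1,\delta_2]]$, proves the intricate congruence Proposition~\ref{P3}, and then runs an elaborate double induction (Definition~\ref{D4}, relation~(\ref{G}), Lemma~\ref{L5}) over a lexicographically ordered index set $J$ to pass from $k=e$ to general $k>e$. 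None of that machinery is recoverable from what you have sketched, and the appeal to Kronecker duality with unspecified Ext corrections does not close the gap.
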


Plenty of evidence (some of which is discussed in the following paragraphs) points toward the possibility that the elements $\varepsilon_j$ in Theorem~\ref{T1} generate Ann$_{ku_*}(\tau)$. For instance, Theorem~\ref{contribucion} below---a crucial ingredient in our proof of Theorem~\ref{CTC}---settles the initial steps in such a task. The following potential picture, which implies that the $\varepsilon_j$ would indeed generate Ann$_{ku_*}(\tau)$, arose from extensive computations mimicking those in the classical Conner-Floyd conjecture:

\begin{conjetura}
\label{T2}
Let $I=I_{k,e}$ be the ideal of $ku_*$ generated by the elements $\varepsilon_j$ for $j=0,\ldots, e$. Then there is a $ku_*$-filtration of $ku_*B(\zet_{2^k})\otimes_{ku_*} ku_*B(\zet_{2^e})$ whose associated graded object is $ku_*/I$-free, and has (the class of) $\tau_{k,e}$ as a basis element.
\end{conjetura}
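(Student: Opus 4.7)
My plan is to build on Theorem~\ref{T1} toward a full structural description in three stages, in the spirit of the classical Conner--Floyd conjecture invoked by the authors. Stage (i): describe each factor $ku_*B\zet_{2^n}$ explicitly as a $ku_*$-module via generators $\beta_i^{(n)}\in ku_{2i-1}B\zet_{2^n}$ ($i\ge 1$), together with the $v_1$- and $2$-action relations read off from the multiplicative formal group law $[2^n](x) = 0$. Stage (ii): assemble the Künneth tensor product $ku_*B\zet_{2^k}\otimes_{ku_*}ku_*B\zet_{2^e}$, in which $\tau_{k,e}$ appears as the simple tensor $\beta_1^{(k)}\otimes\beta_1^{(e)}$. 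Stage (iii): construct a $ku_*$-filtration of this tensor product whose associated graded is a direct sum of copies of $ku_*/I$, with $\tau_{k,e}$ lifting to a distinguished basis element.

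For stage (iii), a natural candidate filtration combines the Atiyah--Hirzebruch (skeletal) filtration carrying the $\beta_i^{(n)}$ with the 2-adic filtration on $ku_*$. The class $\tau_{k,e}$ sits in the bottom piece, and Theorem~\ref{T1} guarantees that its annihilator contains $I$. The filtration must then be calibrated so that in the associated graded these are the \emph{only} surviving relations; equivalently, every graded piece must be $ku_*/I$-free, with an explicit basis in each piece built from iterated tensors of the $\beta_i^{(n)}$. This parallels how the $MU_*$-freeness of $MU^*BG$ was verified in the classical Conner--Floyd story via a carefully chosen generator system together with an inductive check against relations coming from the FGL.

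The principal obstacle is this $ku_*/I$-freeness. Even the reverse inclusion $\mathrm{Ann}_{ku_*}(\tau_{k,e})\subseteq I$ to Theorem~\ref{T1} is open in general; the conjecture requires substantially more, since it demands the same relation shape across \emph{every} basis element of the associated graded. The approach I would take is to construct, for each $\alpha\in ku_*\setminus I$ and each putative basis element $\beta_i^{(k)}\otimes\beta_j^{(e)}$, a ``witness'' functional---either an element of $ku^*B(\zet_{2^k}\times\zet_{2^e})$, or the image of the class under a Landweber-exact theory such as Morava $K$-theory $K(1)$---detecting that $\alpha$ times the basis element is nonzero in the graded setting. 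Assembling such witnesses systematically, across all $\alpha$ and all graded pieces, is the technical heart of the argument. The paper's references to ``extensive computations'' and to Theorem~\ref{contribucion} (settling initial steps) indicate that the conjecture is presently supported by low-rank evidence rather than an abstract framework, so a full proof would likely require a careful induction on the pair $(k,e)$ coupled with an explicit identification of the $ku^*$-dual basis of $ku^*B(\zet_{2^k}\times\zet_{2^e})$.
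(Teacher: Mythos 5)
This statement is explicitly labeled a \emph{conjecture} in the paper (Conjecture~\ref{T2}); the authors offer no proof, only the surrounding evidence in Theorem~\ref{T1} (the elements $\varepsilon_j$ do annihilate $\tau_{k,e}$) and Theorem~\ref{contribucion} (they generate $\mathrm{Ann}_{ku_*}(\tau_{k,e})$ for $e\le 2$), plus computer-verified low cases mentioned in Remark~\ref{aprox}. So there is no proof in the paper to compare your argument against.

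Your proposal, to its credit, does not claim to close the conjecture: you correctly identify the substantive gap, namely that $ku_*/I$-freeness of the associated graded requires the same annihilation pattern to hold on \emph{every} graded basis element, not only on $\tau_{k,e}$, and that even $\mathrm{Ann}_{ku_*}(\tau_{k,e})\subseteq I$ is open in general. What you have written is therefore a research plan rather than a proof, and as such it should be labeled that way; as submitted, it neither proves the statement nor could be compared with a proof from the paper, since none exists. Two concrete cautions if you pursue the plan. First, your Stage~(iii) candidate filtration (Atiyah--Hirzebruch combined with the $2$-adic filtration) is natural, but Remark~\ref{concluding} warns that the tensor product $ku_*(k)\otimes ku_*(e)$ is killed by a single power of $v_1$ when $k=e$ yet the conjectural structure is $ku_*/I$-free on infinitely many generators, so the filtration cannot simply be skeletal-degree-by-skeletal-degree; whatever filtration works must separate the $v_1$- and $2$-towers carefully. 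Second, ``witness'' functionals from Morava $K(1)$ alone will not suffice, since $K(1)_*$ inverts $v_1$ and therefore cannot detect the $2$-torsion phenomena at the heart of the $\varepsilon_j$; you would need $ku/2$- or $ku$-cohomology pairings (as the paper itself uses in Section~\ref{seco}, working with explicit relations~(\ref{F4})--(\ref{F7})) to see the full ideal $I$.
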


\begin{nota}\label{motiv}{\em
An important motivation for Theorem~\ref{T1} and, specially, Conjecture~\ref{T2} comes from a desire of proving the corresponding statements with $ku$ replaced by the 2-primary Brown-Peterson spectrum (or, for that matter, by any $BP \langle n\rangle$ with $n\geq2$). This would yield a ($\zet_{2^k}\times\zet_{2^e}$)-analogue of the classical Conner-Floyd conjecture. An immediate consequence of such a potential result is that the $BP$-projective dimension of $B(\zet_{2^k}\times\zet_{2^e})$ would be $2\,$---the ($\zet_{2^k}\times\zet_{2^e}$)-case of an old conjecture of Landweber. (Some of these goals---for $k=e=2$, as well as for $e=1$---have been accomplished in Nakos' Ph.~D.~work~\cite{Nakos}.) Furthermore, on the applications side, and yet more interesting, is the fact that a proof of the $BP$-version of Conjecture~\ref{T2} would complete an important step toward proving the general case of Conjecture~\ref{conejura} (cf.~\cite{GZ}).
}\end{nota}

The starting point for the second result in this section comes from the observation (Proposition~\ref{PAx}) that, since $ku_*$ is polynomial on a {\em single} variable, the verification that the elements $\varepsilon_j$ in Theorem~\ref{T1} generate Ann$_{ku_*}(\tau)$ follows easily from the next conjecture:

\begin{conjetura}\label{C} None of the elements $2^{e-j}v_1^{2^{j-1}(k-e+3)-3}$ {\rm ($1\leq j \leq e$)} annihilates $\tau_{k,e}$.
\end{conjetura}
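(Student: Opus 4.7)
The plan is to strengthen Theorem~\ref{T1} to a sharp annihilation result by producing, for each $j\in\{1,\ldots,e\}$, an explicit witness to the non-vanishing of
\[
\lambda_j \cdot \tau_{k,e},\qquad \lambda_j := 2^{e-j}v_1^{2^{j-1}(k-e+3)-3}.
\]
I would work inside the direct summand $ku_*B(\zet_{2^k})\otimes_{ku_*}ku_*B(\zet_{2^e})$ identified in the excerpt, using the standard presentation $ku^*B(\zet_{2^r}) = ku^*[[x]]/[2^r]_F(x)$ for the Bott formal group law $F(x,y)=x+y+v_1xy$, where
\[
[2^r]_F(x) \;=\; \sum_{i=1}^{2^r}\binom{2^r}{i}v_1^{i-1}x^i.
\]
Dualising provides a concrete $ku_*$-module basis $\{\beta_i^{(r)}\}_{i\geq 1}$ for $ku_*B(\zet_{2^r})$, with $\tau_{k,e}=\beta_1^{(k)}\otimes\beta_1^{(e)}$; the full annihilator of $\tau_{k,e}$ is then controlled by the joint $2$-adic/$v_1$-adic torsion inherited from $[2^k]_F$ and $[2^e]_F$.

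The central strategy is to keep track of $v_1$-valuations throughout the iterative rewriting used in the proof of Theorem~\ref{T1}. Each identity $\varepsilon_j\cdot\tau_{k,e}=0$ arises by applying the relations $[2^k]_F(x)=0$ and $[2^e]_F(x)=0$ in a sequence of moves that progressively trade $v_1$-powers for factors of $2$. To prove Conjecture~\ref{C}, I would verify that each such trade consumes at least one whole factor of $v_1$, so that shaving off just one power of $v_1$ is strictly insufficient to produce the factor $2^{e-j}$. The argument Theorem~\ref{contribucion} provides would certify this sharpness for small $j$, and one hopes for an induction on $j$ driven by the exponent identity $N_{j+1}-N_j=2^{j-1}(k-e+3)\geq 2$, where $N_l := 2^{l-1}(k-e+3)-2$; passing from step $j$ to step $j+1$ would then amount to showing that the additional $v_1$-capacity $N_{j+1}-N_j$ is exactly what is needed to swap one more factor of $2$ from $\varepsilon_j$ to $\varepsilon_{j+1}$.

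The main obstacle is the joint nature of the problem in the two tensor factors. Any would-be expression of $\lambda_j\tau_{k,e}$ as a combination of the known annihilators $\{\varepsilon_l\tau_{k,e}\}$ would involve reshuffling powers of $v_1$ between the two factors, and the relations $[2^k]_F$ and $[2^e]_F$ permit cross-term cancellations that are absent in the single-factor classifying spaces handled in Nakos' thesis~\cite{Nakos}. A natural avenue is to filter $ku_*B(\zet_{2^k})\otimes_{ku_*}ku_*B(\zet_{2^e})$ by a length invariant that decouples the two factors one step at a time, so that the conjecture reduces to two single-factor statements, each controlled by the standard Kummer valuations $\nu_2\binom{2^r}{i}$. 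Producing such a decoupling filtration---or equivalently, establishing a new congruence on $[2^r]_F(x)$ modulo $(2^{e-j+1},\,v_1^{N_j-1})$ compatible with the tensor structure---is precisely what is missing, and is likely the reason the statement is currently posed only as a conjecture.
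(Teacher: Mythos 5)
The statement you are attempting is posed in the paper only as a conjecture, and the paper in fact does \emph{not} prove it in general; it proves it only for $j\le 2$ (Theorem~\ref{contribucion}), and notes (Remark~\ref{aprox}) that the second author's thesis handles $j=3$ for $k=e$ and gives weaker, asymptotic results beyond that. Your write-up is consistent with this reality---you honestly flag at the end that the decoupling filtration you would need ``is precisely what is missing''---but as it stands it is an outline of a hoped-for strategy rather than a proof of anything, and it should not be presented as establishing Conjecture~\ref{C}.

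Beyond that overall status, the concrete mechanism you propose does not match what the paper actually does where it does prove something, and I think the mismatch matters. The paper's partial proof (Section~\ref{seco}) works directly in the free $ku_*$-module $F$ on generators $(i,j)$, posits a hypothetical relation of the shape~(\ref{F4}) expressing $2^{e-j}v_1^{f(j)-1}(0,0)$ in terms of the $[2^e]$- and $[2^k]$-series relations $g_{\alpha,\beta}$ and $h_{\alpha,\beta}$, and then derives a contradiction by carefully bounding the $2$-adic valuations of the coefficients $c_{\alpha,\beta}$ via a double inverse induction (Propositions~\ref{P1},~\ref{P2}) and a delicate exact-valuation computation (Lemma~\ref{C1}). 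It is precisely a \emph{joint}, two-factor bookkeeping argument; nowhere does it ``decouple'' the two tensor factors into single-factor statements governed only by $\nu_2\binom{2^r}{i}$. In fact, the cross-terms are where the hard work happens (cf.\ the case analysis in the proof of Proposition~\ref{P2}). Two more specific issues with your plan: (1) The intended induction ``from $j$ to $j+1$ consuming exactly $N_{j+1}-N_j$ units of $v_1$-capacity'' has no mechanism attached to it; the paper's Proposition~\ref{P3} does trade $2$-powers for $v_1$-powers, but it gives only upper bounds (i.e.\ vanishing) and cannot by itself produce the sharpness/non-vanishing Conjecture~\ref{C} requires. (2) The reduction to two single-factor classifying spaces cannot be correct as stated: for a single $B\zet_{2^r}$ the annihilator of the bottom class is generated by $2^r$ alone (cf.~(\ref{ord})), whereas here the interaction of $[2^k]$ and $[2^e]$ produces genuinely mixed relations in both $2$- and $v_1$-degree; this interaction is the whole content of Theorem~\ref{T1}, and is exactly what any proof of Conjecture~\ref{C} must control. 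If you want to make progress, I would suggest starting from the paper's Propositions~\ref{P1}--\ref{P2} and trying to extend the double-induction valuation estimates to $j\ge 3$, rather than hoping for a decoupling filtration that the two-factor structure seems unlikely to admit.
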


Conjecture~\ref{C} is true for $k\ge e=1$ and $k\ge e=2$. More generally, the following result is proved in Section~\ref{seco}:

\begin{theorem}\label{contribucion}
Neither of the elements $2^{e-1}v_1^{k-e}$ and $2^{e-2}v_1^{2(k-e)+3}$ annihilates the toral class $\tau_{k,e}$ \emph{(}the latter element makes sense only for $e\geq 2\hspace{.2mm}$\emph{)}. Consequently, the elements $\varepsilon_j$ in Theorem~\emph{\ref{T1}} generate Ann$_{ku_*}(\tau_{k,e})$ provided $e\leq 2$ and $k\geq e$.
\end{theorem}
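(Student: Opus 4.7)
My plan is to establish the two non-annihilation statements by explicit computation in $ku_*B(\mathbb{Z}_{2^k}\times\mathbb{Z}_{2^e})$, and then invoke Proposition~\ref{PAx} to convert them into the statement about generators of $\text{Ann}_{ku_*}(\tau_{k,e})$. Since $ku_*=\mathbb{Z}[v_1]$ is a polynomial ring in a single variable over $\mathbb{Z}$, Proposition~\ref{PAx} should reduce the generation claim to precisely the ``one $v_1$-power below each $\varepsilon_j$'' non-annihilation statements that the theorem asks us to produce.

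For the computational core, I would work from the cohomological presentation
\[
ku^*B\mathbb{Z}_{2^n}=ku^*[[x_n]]\bigl/\bigl([2^n](x_n)\bigr),\qquad [2^n](x_n)=v_1^{-1}\bigl((1+v_1x_n)^{2^n}-1\bigr),
\]
and from the corresponding $ku_*$-module description of $ku_*B\mathbb{Z}_{2^n}$: standard generators $\beta_j$ ``dual'' to $x_n^j$, with $\beta_1$ the bottom class of order $2^n$ in degree $1$, and with all other module relations obtained by dualizing $[2^n](x_n)=0$. Under the Künneth decomposition, the toral class $\tau_{k,e}$ lies in the tensor-product summand in degree $2$ and equals the product $\beta_1^{(k)}\otimes\beta_1^{(e)}$ of bottom classes, inheriting relations from both $[2^k]$ and $[2^e]$; in particular the ``$v_1$-action'' on these classes is governed by the higher coefficients of the two $2^n$-series.

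The heart of the first non-annihilation is to expand $2^{e-1}v_1^{k-e}\cdot\bigl(\beta_1^{(k)}\otimes\beta_1^{(e)}\bigr)$ in the canonical basis $\{\beta_i^{(k)}\otimes\beta_j^{(e)}\}_{i,j\ge1}$ via the dualized relations and to show that some coefficient is odd, hence nonzero. This is a $2$-adic bookkeeping exercise in which Kummer's theorem on the $2$-adic valuations of $\binom{2^n}{i}$ controls which terms of the expansion survive and with what parity. The second non-annihilation, involving $2^{e-2}v_1^{2(k-e)+3}$ (for $e\ge 2$), is parallel but requires one more layer of expansion and a correspondingly more delicate combinatorial analysis.

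Finally, the generator conclusion: once Theorem~\ref{T1} gives the containment $(\varepsilon_0,\ldots,\varepsilon_e)\subseteq\text{Ann}_{ku_*}(\tau_{k,e})$ and the two non-annihilation statements show that $\varepsilon_j/v_1\notin\text{Ann}_{ku_*}(\tau_{k,e})$ for $j=1$ and, when applicable, $j=2$, Proposition~\ref{PAx} forces these containments to be equalities. When $e\leq 2$ the list $\varepsilon_0,\ldots,\varepsilon_e$ is exhausted by the data just assembled, completing the argument. The main obstacle will be the $2$-adic bookkeeping of the expansion: the correct order in which to apply the dualized $[2^k]$- and $[2^e]$-relations is essential to keep the combinatorics under control and to isolate an unambiguous odd leading coefficient.
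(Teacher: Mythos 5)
Your reduction to Proposition~\ref{PAx} is exactly right, as is the intuition that the content is $2$-adic bookkeeping via Kummer's theorem (Lemma~\ref{L1} in the paper). But the computational core as you describe it doesn't work: there is no ``canonical basis'' $\{\beta_i^{(k)}\otimes\beta_j^{(e)}\}$ of the tensor product $ku_*(k)\otimes_{ku_*}ku_*(e)$. This module is a quotient of the free $ku_*$-module on those generators by the submodule generated by the dualized $[2^k]$- and $[2^e]$-series relations, and it is genuinely non-free (indeed, determining its $ku_*$-module structure is essentially the unresolved Conjecture~\ref{T2}). So the phrase ``expand in the canonical basis and show that some coefficient is odd'' does not determine a well-defined target, and applying the relations repeatedly to ``reduce'' an element does not land you in a normal form without first producing a Gr\"obner-type basis, which you don't supply.

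The paper's proof resolves this by inverting the problem. Rather than trying to show an element of the quotient is nonzero by ``expanding'' it, it assumes for contradiction that the element $2^{e-j}v_1^{f(j)-1}(0,0)$ of the \emph{free} module $F$ lies in the submodule spanned by the defining relations $g_{\alpha,\beta}$ and $h_{\alpha,\beta}$ (this is equation~(\ref{F4})), and then bounds the $2$-adic valuations of the hypothetical integer coefficients $c_{\alpha,\beta}, d_{\alpha,\beta}$. The bookkeeping is a cascade: Proposition~\ref{P1} gives a first valuation bound on the $c_{\alpha,\beta}$ by downward induction on $\alpha+\beta$; Proposition~\ref{P2} improves it in a delicate range via a double induction; and Lemma~\ref{C1} sharpens a specific bound to an \emph{equality}. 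Only after all of that does one extract a ${}\bmod 2$ contradiction ($1\equiv0$) from a handful of surviving terms. None of this scaffolding --- the passage to the free presentation, nor the layered valuation estimates --- appears in your plan, so as written the proposal has a real gap both in its framing and in its execution.
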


\begin{nota}\label{aprox}{\em
Conjecture~\ref{C} for $j=3$ and $k=e$ and, therefore, the last sentence in Theorem~\ref{contribucion} for $k=e\leq3$ are proved in the Ph.~D.~thesis of the second author\footnote{The thesis extends in fact the range $k=e\leq3$ to $k=e\leq4$, for which Conjecture~\ref{T2} is also verified. This depends on extensive computer-based calculations.}. As a consequence, it is deduced that, for $k=e\geq4$, none of the elements
\begin{equation}\label{noneof}
2^{e-j}v_1^{11\cdot 2^{j-3}-2}\quad (4\leq j\leq e)
\end{equation}
annihilates $\tau_{e,e}$. Note that, for $j=4$, this says that $2^{e-4}v_1^{20}\tau_{e,e}\neq0$, involving a power of $v_1$ which is only one less than that described by Conjecture~\ref{C}. Although the {\it difference} between the exponents in $v_1$ in~(\ref{noneof}) and Conjecture~\ref{C} (for $k=e$) grows exponentially on $j$, it is its {\it quotient} the one with a good asymptotic behavior, as it tends to $\frac{11}{12}$ as $t\rightarrow\infty$. The proofs of these facts are given in Chapter~3 of~\cite{Velasco}, and are based on a much more elaborated argument than that given in Section~\ref{seco} to prove Theorem~\ref{contribucion}. Since we have no (say, TC-)application for these extended results, we leave the interested reader to look in~\cite{Velasco} for proof details of the facts remarked in this paragraph.
}\end{nota}

The rest of the section is devoted to preparing the grounds for the proof of Theorem~\ref{T1}---the actual proof is done in Section~\ref{S4}.

\medskip
We use the shorthand $ku_*(e)$ and $ku_*(k,e)$, respectively, for the reduced $ku_*$-homology of $\Sigma^{-1}B\zet_{2^e}$ and $\Sigma^{-2}B\zet_{2^k}\wedge B\zet_{2^e}$. The suspended spectra are taken just for notational convenience as bottom classes become zero-dimensional. It is well known that $ku_*(e)$ has a length-$1\;\; ku_*$-resolution
$$
0\longrightarrow L_1 \stackrel{\partial_e}{\longrightarrow}L_0 \longrightarrow ku_* (e) \longrightarrow 0
$$
where $L_1=L_0$ is the $ku_*$-free module on generators $z_i$ for $i\geq 0$. Each $z_i$ has dimension $2i$ and will also be denoted as $(i)$. The map $\partial _e$ comes from the $2^e$-series for the multiplicative formal group law $[2^e](x) = \frac{(1+v_1x)^{2^e}-1}{v_1}= \sum_{i=1}^{2^e}\binom{2^e}{i}v_1^{i-1}x^i$. Explicitly,
\begin{equation}
\label{Erds}
\partial_e (i) =\sum^{2^e-1}_{s\geq 0}a_s (i-s)
\end{equation}
where $(\ell )=0$ for $\ell <0$, and $a_s=\binom{2^e}{s+1}v^s_1$. 
In particular, the $ku_*$-K\"unneth spectral sequence for 
$\Sigma^{-2}B\zet_{2^k}\wedge B\zet_{2^e}$ collapses to the usual 
Landweber short exact sequence
$$
0\to ku_* (k) \otimes_{ku_*} ku_* (e) \to ku_*(k,e) \to \Sigma {\rm Tor}_1^{ku_*}(ku_* (k), ku_*(e))\to 0. 
$$

\smallskip
The attention in this and the next two sections focuses on the tensor group $ku_* (k) \otimes_{ku_*} ku_* (e)$---where the toral class lies. An important computational tool will be given by the Smith $ku_*$-morphism $\delta\colon ku_*(e)\rightarrow ku_*(e)$ determined by $\delta (i)=(i-1)$. We have the two endomorphisms $\delta_1=\delta \otimes 1$ and $\delta_2 =1\otimes \delta$ of $ku_* (k) \otimes ku_*(e)$ through which we can define a $T$-module structure on $ku_*(k) \otimes ku_* (e)$, where $T=ku_*[[\delta_1, \delta_2]]$ is the power series ring on two variables $\delta_1$ and $\delta_2$ with coefficients in $ku_*$. An element $(a)\otimes (b)\in ku_* (k) \otimes_{ku_*} ku_* (e)$ will simply be denoted by $(a,b)$. For instance, the toral class $\tau \in ku_0(k,e)$ corresponds to $(0,0)$. We will generically denote by $((s))$  any linear combination of elements of the form $(a,b)$ with $a+b=s$. 

\begin{proposition}
\label{P3} For $j=1, \ldots, e$ any element $2^{k-j} v_1^{2^j+2^{j-1}-2}(a,b)$ lies in the $T$-module generated by terms of the form
$$
2^{k-s}v^{2^s+2^{j-1}-2}_1((a+b+2^j -2^s)),\quad {\rm with}\quad j<s\leq k.
$$
\end{proposition}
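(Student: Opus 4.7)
The plan is to work from the unique Landweber-type relation in $ku_*(k)\otimes_{ku_*}ku_*(e)$ coming from $\partial_k\otimes 1$, namely
\[
\sum_{s=0}^{2^k-1}\binom{2^k}{s+1}v_1^s(a-s,b)=0.
\]
With $u:=1+v_1\delta_1\in T$ and $B_r:=u^{2^r}-1=\sum_{t=1}^{2^r}\binom{2^r}{t}v_1^t\delta_1^t$, this is equivalent to $B_k(a,b)=0$; the polynomial identity $B_r^2=B_{r+1}-2B_r$ (obtained from $(1+B_r)^2=1+B_{r+1}$) will be the engine of the proof. Kummer's theorem $\nu_2\binom{2^r}{t}=r-\nu_2(t)$ shows that the ``diagonal'' monomials $\binom{2^k}{2^s}v_1^{2^s}\delta_1^{2^s}$ of $B_k$ carry exactly the $2$-adic coefficient $2^{k-s}$ (times an odd unit) appearing in the proposition's target generators.

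First I would apply the shift $a\mapsto a+2^j$ and multiply the relation $B_k(a+2^j,b)=0$ through by $v_1^{2^{j-1}-2}$ (with the $j=1$ case handled instead by absorbing the would-be negative $v_1$-power into the leading $v_1\delta_1$ prefactor of $B_j$). Under this transformation the $t=2^j$ diagonal term becomes precisely the target $2^{k-j}v_1^{2^j+2^{j-1}-2}(a,b)$, while the diagonal terms with $t=2^s$ for $s>j$ produce the leading monomials $2^{k-s}v_1^{2^s+2^{j-1}-2}(a+2^j-2^s,b)$ of the generators $2^{k-s}v_1^{2^s+2^{j-1}-2}((a+b+2^j-2^s))$ of $M_j(a,b)$. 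The off-diagonal monomials $\binom{2^k}{t}v_1^t\delta_1^t$ with $t$ not a power of $2$ (as well as $t=2^s$ for $s<j$) are then handled recursively by running the identity $2^{k-i}B_i=2^{k-i-1}B_{i+1}-2^{k-i-1}B_i^2$ upward from $i=j$, pushing each off-diagonal contribution into a higher-index $B_s$ until the recursion terminates against $B_k(a,b)=0$.

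The main obstacle is the combinatorial bookkeeping at each stage: one must show, via Kummer, that every off-diagonal monomial $\binom{2^r}{t}v_1^t\delta_1^t$ with $\nu_2(t)=\mu$ gets matched against the generator indexed by $s'=j+\mu\in\{j+1,\ldots,k\}$, and that the accompanying $\delta_1$-shift $\delta_1^t(a+2^j,b)=(a+2^j-t,b)$ produces a class of total degree at most $a+b+2^j-2^{s'}$, as required for membership in $T\cdot((a+b+2^j-2^{s'}))$. This degree-counting, combined with careful tracking of the odd-unit factors that accumulate from the binomial coefficients during the recursion, is tedious but elementary; no further ideas beyond the Kummer matching and the identity $B_r^2=B_{r+1}-2B_r$ are needed, and the argument terminates in at most $k-j$ iterations.
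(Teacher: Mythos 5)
Your approach has a fundamental gap: you work exclusively with the relation coming from $\partial_k\otimes 1$ on the first tensor factor (encoded as $B_k(a,b)=0$ with $B_k=(1+v_1\delta_1)^{2^k}-1$), but Proposition~\ref{P3} genuinely requires the relation $1\otimes\partial_e$ from the second factor as well. The cleanest way to see this: when $j=e=k$, the index range $j<s\leq k$ is empty, so the $T$-module in question is zero and the claim becomes $v_1^{2^k+2^{k-1}-2}(a,b)=0$ in $ku_*(k)\otimes_{ku_*}ku_*(k)$; e.g.\ for $k=e=j=1$ it becomes $v_1(a,b)=0$ in $ku_*(1)\otimes_{ku_*}ku_*(1)$. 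This is true — one has $v_1(a,b)=-2(a+1,b)=v_1(a+1,b-1)=\cdots=v_1(a+b,0)=-2(a+b+1,0)=v_1(a+b+1,-1)=0$ — but the chain of equalities alternates between the two factor relations, and the conclusion fails if only $\partial_k\otimes 1$ is imposed: the quotient $ku_*(k)\otimes_{ku_*}L_0$ is $ku_*(k)$-free and $v_1$ is not a zero-divisor there. The asymmetric split $v_1^{g(s)+g(j-1)}=v_1^{g(s)}\cdot v_1^{g(j-1)}$ in the proposed generators is exactly the footprint of the paper's argument, where $v_1^{g(j-1)}$ is stripped off via the $\partial_k$-relation on the $(a)$-coordinate and $v_1^{g(j)}$ via the $\partial_e$-relation on the $(b)$-coordinate; with only $\delta_1$ available you have no way to produce the $g(j-1)$ part except by the fixed prefactor $v_1^{2^{j-1}-2}$, and that prefactor creates residuals with $v_1$-power strictly below $2^{j+1}+2^{j-1}-2$, the minimum among your target generators.

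Independently of that structural issue, the ``Kummer matching'' you sketch does not close. For an off-diagonal monomial $\binom{2^k}{t}v_1^{t}\delta_1^{t}$ with $\nu_2(t)=\mu$, you propose to match it against the generator $s'=j+\mu$. Membership in $T\cdot\bigl((a+b+2^j-2^{s'})\bigr)$ requires both $t+2^{j-1}-2\geq 2^{s'}+2^{j-1}-2$ and $a+b+2^j-t\leq a+b+2^j-2^{s'}$, i.e.\ $t\geq 2^{j+\mu}$. This fails already for $t=2^j+1$ (so $\mu=0$, and the required bound is $t\geq 2^{j}$ but the needed $s'>j$ forces $t\geq 2^{j+1}$), and for $t=3\cdot 2^{\mu}$ whenever $j\geq 2$. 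The proposed recursion $2^{k-i}B_i=2^{k-i-1}B_{i+1}-2^{k-i-1}B_i^2$ does not repair this: unwinding it expresses $2^{k-j}B_j$ modulo $B_k$ as a sum of $B_i^2$ terms, but the below-diagonal part of $B_j$ itself (the $t<2^j$ terms) still lands on elements with $v_1$-exponent $t+2^{j-1}-2<2^{j}+2^{j-1}-2$, too small to lie in the claimed $T$-module. The paper's proof avoids all of this by using the two Smith operators $\delta_1,\delta_2$ together with a multilayer induction (the summations in its display~(\ref{E3}) and the auxiliary inductions on $\mu$ and $\lambda$); that interleaving of the two factor relations is not optional.
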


The proof of Proposition~\ref{P3} and the deduction of Theorem~\ref{T1} from Proposition~\ref{P3} are given in the next section. It will be convenient to write
\begin{equation}
\label{Fml}
2^{k-j}v^{2^j+2^{j-1}-2}_1 (a,b) \equiv \sum^k_{s=j+1}2^{k-s}v^{2^s +2^{j-1}-2}_1 ((a+b+2^j -2^s))
\end{equation}
for expressing the conclusion in Proposition~\ref{P3}. Here, the congruence symbol is to be read as ``modulo $T$-multiples of the elements on the right hand side''. This notation will be in force through the following sections.

\begin{nota}\label{concluding}{\em
Although the proof of Proposition~\ref{P3} is relatively straightforward, and has the $k=e$ case of Theorem~\ref{T1} as an obvious consequence, in Section~\ref{S4} we need to appeal to a rather involved process in order to derive the general case of Theorem~\ref{T1} out of Proposition~\ref{P3}. In a sense, our approach to Theorem~\ref{T1} fills in the gap for the case $k>e$. The other major feature of our proof for Theorem~\ref{T1} has already been discussed in Remark~\ref{motiv}, namely, the possibility of extending this result to the $BP$-case. In this respect, it is to be observed that Proposition~\ref{P3} claims, in particular, that not only the toral class, but the whole tensor product $ku_*(k)\otimes ku_*(e)$ is killed by $v_1^{2^{e-1}(k-e+3)-2}$ when $k=e$. This situation might as well hold for any $k\geq e$, but perhaps not with $BP$. Thus, in an eventual $BP$-generalization of these results, the present direct form of Proposition~\ref{P3} might need to be replaced by the type of (inductive) methods in Section~\ref{S4}.
}\end{nota}

\section{Proof of Theorem~\ref{T1}}
\label{S4}
The arithmetical manipulations in this section are based on the standard fact that the highest power of $2$ dividing the binomial coefficient $\binom{2^e}{s+1}$ is
\begin{itemize}
\item[{(i)}] \ equal to $e-\ell$, if $s+1=2^\ell$, for some $\ell=0, \ldots, e$;
\item[{(ii)}] \ grater than $e-\ell$, if $2^\ell<s+1<2^{\ell+1}$, for some $\ell=0, \ldots, e-1$.
\end{itemize}
For instance, in terms of the notation set up in~(\ref{Fml}), the relation imposed by~(\ref{Erds}) on the second tensor factor of $ku_*(k) \otimes ku_*(e)$ yields $$2^e(a,b)\equiv \sum\limits^{e}_{s=1}2^{e-s}v_1^{2^s-1}(a,b-2^s+1),$$ a ($j=0$)-version of~(\ref{Fml}) when $k=e$ as long as we think of $2^{-1}$ as being~$1$. The proof of Proposition~\ref{P3} will make a systematic use of analogous considerations based on a suitable combination of the relations coming from both tensor factors.

\begin{proof}[Proof of Proposition~\ref{P3}]
Set $g(\ell)=2^\ell-1$ and, for a generator $(a,b)$ of $ku_*(k)\otimes ku_*(e)$, write $2^{k-j+1}v^{g(j-1)}_1 (a) = 2^{k-j+2}A+A'$ and $2^{e-j}v^{g(j)}_1(b) =  2^{e-j+1}B+ B'$ with 
$$
A\equiv\sum^{j-2}_{s=0}2^{j-s-2}v^{g(s)}_1 (a+g(j-1)-g(s)),\;\;B\equiv\sum^{j-1}_{s=0} 2^{j-s-1}v^{g(s)}_1 (b+g(j)-g(s)),
$$

\vspace{-4mm}
$$
A'\equiv\sum^k_{s=j}2^{k-s}v^{g(s)}_1(a+g(j-1)-g(s)),
\;\;B'\equiv\sum^e_{s=j+1}2^{e-s}v^{g(s)}_1 (b+g(j)-g(s)).
$$
Note that the terms coming from (ii) above are meant to be taken care of by the suitable use of the congruence symbols (of course, $B'=0$ when $j=e$, and $A=0$ when $j=1$). Then, with $k=e+d$, $d\geq 0$ we get
\begin{eqnarray*}
\lefteqn{2^{k-j}v_1^{g(j)+g(j-1)}(a,b) \;\;=\;\; 2^dv_1^{g(j-1)}\left(a, 2^{e-j}v_1^{g(j)}(b)\right)}\\
&\ \ \equiv & 2^d v_1^{g(j-1)}\Big( a, 2^{e-j+1}B+\sum^e_{s=j+1}2^{e-s}v^{g(s)}_1 (b+g(j)-g(s))\Big)\\
&\ \ =& 2^{k-j+1}v^{g(j-1)}_1 (a,B) +\sum^e_{s=j+1} 2^{k-s}v^{g(s)+g(j-1)}_1 (a, b+g (j)-g(s)).
\end{eqnarray*}
The summation in the previous line has the form required in~(\ref{Fml}). Thus we only need to deal with the term $2^{k-j+1}v^{g(j-1)}_1 (a,B)$ which is congruent with
\begin{equation}
\label{E2}
2^{k-j+2}(A,B)+\sum^k_{s=j} 2^{k-s}v^{g(s)}_1(a+g(j-1)-g(s), B).
\end{equation}
The last summation is congruent to
$$
\sum^k_{s=j}2^{k-s}v^{g(s)}_1 \Big(a+g(j-1)-g(s), \sum^{j-1}_{\sigma =0} 2^{j-\sigma -1} v^{g(\sigma)}_1 (b+g(j)-g(\sigma))\Big)
$$
or, in the proposed shortened form,
\begin{equation}\label{repetida}
\;\sum^k_{s=j} \sum^{j-1}_{\sigma =0} 2^{k+j-s-\sigma -1}v^{g(s)+g(\sigma)}_1((a+b+g(j)+g(j-1)-g(s)-g(\sigma)))
\end{equation}
for each of whose summands one has $1\leq s+\sigma +1-j\leq k$. Those  with $s+\sigma +1-j>j$ are ($T$-multiples of terms) of the form required in~(\ref{Fml}) since, in that case, $g(s)+g(\sigma)\geq g(s+\sigma +1-j)+g(j-1)$, as it can easily be verified. Those with $s+\sigma +1-j=j$ take in fact the form 
$2^{k-j}v^{g(s)+g(\sigma)}_1 (\alpha, \beta)$, where now 
$g(s)+g(\sigma)\geq g(j)+g(j-1)$, $\alpha <a$, and $\beta >b$. Finally, those with $s+\sigma +1-j=\mu$ for a fixed $1\leq \mu <j$ and necessarily with $\mu\geq \sigma +1$ are taken into account within
$$
\sum^{\mu -1}_{\sigma =0} 2^{k-\mu}v^{g(\mu +j-\sigma -1)+g(\sigma)}_1 ((\cdots\!\hspace{.8mm}))
$$
where numbers inside double parenthesis are forced by dimensional reasons.
But since $\mu<j$, all terms in the last summation are $T$-multiples of that with $\sigma =\mu-1$. We deduce that the summation in~(\ref{E2}) is congruent to {\small$$
\sum^k_{s>j}\!\hspace{.6mm}2^{k-s}v_1^{g(s)+g(j-1)}((\cdots\!\hspace{.8mm}))
+\sum^{j-1}_{\mu =1}\!\hspace{.4mm}2^{k-\mu}v^{g(j)+g(\mu-1)}_1 
((\cdots\hspace{.7mm}))+\!\!\!\!\sum_{(\alpha,\beta)\in\Lambda}\!\!\!\!
2^{k-j}v^{g(j)+g(j-1)}_1 (\alpha, \beta)
$$}\noindent where $\Lambda\subseteq\{(\alpha,\beta)\;|\;\alpha+\beta=a+b,\;\,\alpha<a,\;\,\beta>b\}$. On the other hand, the first term in~(\ref{E2}) is 
\begin{eqnarray*}
\lefteqn{2^{k-j+2}(A,B)=2^{d+1}(A, 2^{e-j+1}B)\equiv 2^{d+1}
\biggl(\!A, \sum^e_{s=j}2^{e-s}v_1^{g(s)} (b+g(j)-g(s)) \biggl)}\\
&=& \!\!\sum^e_{s=j}2^{k-s+1} v^{g(s)}_1(A, b+g(j)-g(s))\\
&\equiv & \!\!\sum^e_{s=j} 2^{k-s+1}v^{g(s)}_1 \biggl(\sum^{j-2}_{\sigma =0} 2^{j-\sigma -2} v^{g(\sigma)}_1 (a+g(j-1)-g(\sigma)), b+g(j) -g(s)\biggr)\\
&=& \!\!\sum^e_{s=j} \sum^{j-2}_{\sigma =0} 2^{k-1+j-s-\sigma}v^{g(s)+g(\sigma)}_1((a+b+g(j)+g(j-1)-g(s)-g(\sigma)))
\end{eqnarray*}
which is similar to~(\ref{repetida}) and, therefore, the corresponding analysis applies, with the exception that, in the case $s+\sigma+1-j=j$, the relations $\alpha<a$ and $\beta>b$ might not necessarily hold but, instead, one gets the strict inequality $g(s)+g(\sigma)>g(j)+g(j-1)$ from the new restriction $\sigma\leq j-2$.

\smallskip
Putting everything together, there results that $2^{k-j} v_1^{g(j)+g(j-1)}(a,b)$ 
is congruent to
\begin{equation}\label{E3}\begin{array}{l}
\displaystyle\sum\limits^k_{s>j} 2^{k-s}v^{g(s)+g(j-1)}_1 ((\cdots\!\hspace{.8mm}))
\;+\;\sum\limits^{j-1}_{\mu =1}2^{k-\mu}v^{g(j)+g(\mu-1)}_1 ((\cdots\!\hspace{.8mm}))\\
{}\;+\;\sum\limits_{R>0} 2^{k-j} v^{g(j)+g(j-1)+R}_1 ((\cdots\!\hspace{.8mm}))
\;+\!\sum\limits_{(\alpha,\beta)\in\,\Lambda}\!\!2^{k-j}v^{g(j)+g(j-1)}_1 (\alpha, \beta).
\end{array}\end{equation}
Now, for $1\leq \mu \leq j-1$ (holding only for $j>1$) one has $g(j)+g(\mu-1)>g(\mu) +g(\mu -1)$ so that, by induction, each term on the second summation in~(\ref{E3}) becomes 
$$
2^{k-\mu}v_1^{g(j)+g(\mu-1)}((\cdots\!\hspace{.8mm}))\equiv \sum_{\lambda >\mu}2^{k-\lambda}v_1^{g(\lambda)+g(j)+g(\mu-1)-g(\mu)}((\cdots\!\hspace{.8mm})).
$$
Here, terms with $\lambda >j$ are easily seen to be contained (up to congruence) in the first summation 
of~(\ref{E3}), whereas those with $\lambda =j$ are contained in the third summation 
of~(\ref{E3}). But the terms $2^{k-\lambda}v_1^{g(\lambda )+g(j)+g(\mu-1)-g(\mu)}((\cdots))$ with $1\leq \mu <\lambda \leq j-1$ are easily seen to be $T$-multiples of the corresponding $\lambda$-th term in the second summation of~(\ref{E3}). Therefore, an auxiliary inductive process on $\mu=1,\cdots,j-1$ allows us to get rid of the second summation in the expression~(\ref{E3}) for $2^{k-j}v_1^{g(j)+g(j-1)}(a,b)$. Then, by iterating the resulting formula, we can also get rid, first, of the last summations in~(\ref{E3}) and, then, of the third summation in~(\ref{E3}).
\end{proof}

One further formula is needed before proving Theorem~\ref{T1}. We use the relation imposed by~(\ref{Erds}) on the second tensor factor to write
\begin{equation}\label{dereferencia}
2^{e-j}v^{g(j)}_1(a,b)\equiv \sum_{s\in I_j}2^{e-s}v^{g(s)}_1 ((a+b+g(j)-g(s)))
\end{equation}
for $j=1,\ldots,e$, where $I_j=\{s\;|\;0\leq s\leq e,\;\, s\neq j\}$. Then, for $j<\ell\leq d+j$ (recall $d=k-e\geq 0$) we have
\begin{eqnarray}
\label{E5}
\lefteqn{2^{k-\ell}v^{2^{j-1}(\ell +3-j)-2}_1(a,b) \;\;=\;\; 2^{d+j-\ell}v^{2^{j-1}(\ell +1-j)-1}_1\cdot 2^{e-j}v^{g(j)}_1(a,b)\nonumber}\\
&\ \hspace{1.2cm} \equiv & 2^{d+j-\ell}v^{2^{j-1} (\ell +1-j)-1}_1
\biggl(\,\sum_{\,s\in I_j}2^{e-s}v^{g(s)}_1((a+b+g(j)-g(s)))\biggr)\nonumber\\
&\ \hspace{1.2cm}=& \sum_{s\in I_j}2^{k+j-\ell-s}v_1^{2^{j-1}(\ell -j)+g(j-1)+g(s)}((a+b+g(j)-g(s))).
\end{eqnarray}

Note that, in the last summation, $1\leq \ell +s-j\leq k$. In particular, we could use Proposition~\ref{P3} in order to get rid of some such summands (those with a high enough power of $v_1$). However the resulting expression seems to become unnecessarily complicated. Instead, all summands in~(\ref{E5}) will be taken care of (in the arguments below) by means of a suitable inductive process.

\medskip
We now start working toward the proof of Theorem~\ref{T1}.
The relation $2^e(0,0)=0$ is obvious as it comes directly from the second tensor factor. In fact, in $ku_*(e)$ one can easily prove (see for instance Corollaries 2.6 and 2.7 in~\cite{cfimm})
\begin{equation}
\label{ord}
2^{e+i}(i)=0\quad {\rm and}\quad 2^{e+i-1}v_1(i)=2^{e+i}(i+1),\quad {\rm for}\quad i\geq 0.
\end{equation}
This implies the ($j=1$)-case in the conclusion of Theorem~\ref{T1}:
\begin{equation}
\label{EQ}
\begin{array}{c}\displaystyle
2^{e-1}v^{k-e+1}_1(0,0)=2^ev^{k-e}_1(0,1)\hspace{3cm}\;\\
\rule{0mm}{5mm}=2^{e+1}v^{k-e-1}_1 (0,2)=\cdots =2^k (0, d+1)=0
\end{array}
\end{equation}
where the last equality comes directly from the bottom relation in the first tensor factor. However, the ($j>1$)-cases in the conclusion of Theorem~\ref{T1} are much more cumbersome to derive. The auxiliary constructions below are intended to organize an elaborated process whose main idea is to use~(\ref{E5}) as a generalization of~(\ref{EQ}) in order to fill in the gap left by~(\ref{Fml}) when $k>e$ (this is a slightly more detailed description of the first two sentences in Remark~\ref{concluding}). Thus, throughout the rest of this section we assume $k=e+d$ with $d>0$.

\begin{definition}
\label{D4} {\em Set $p(0)=-1$ and $p(\sigma)=(d+1)g(\sigma -1)$ for $\sigma\geq1$. Consider the set $J$ consisting of all pairs of non-negative integers $(i,j)$ satisfying $1\leq j\leq d+\sigma$ whenever $p(\sigma -1)<i\leq p(\sigma)$ with $1\leq \sigma \leq e$. The number $\sigma$ above is determined by $i$; yet at times we will denote it as $\sigma(i,j)$. Then, for $(i,j)\in J$, set
$$
s(i,j)=\left\{
\begin{array}{lcl}
d+1+i-2^{j-1}, &{\rm for} & j\leq \sigma(i,j),\\
d+1+i-2^{\sigma(i,j)-1}(j-\sigma(i,j)+1), & {\rm for} & j\geq \sigma(i,j),
\end{array}\right.
$$
$$
u(i,j)=\left\{
\begin{array}{lcl}
g(j)+g(\sigma(i,j)-1), & {\rm for} & j\leq \sigma(i,j),\\
2^{\sigma(i,j)-1}(j+3-\sigma(i,j))-2, &{\rm for} & j\geq \sigma(i,j),
\end{array}\right.
$$
and
$$
E(i,j)=2^{k-j}v^{u(i,j)}_1((s(i,j))).
$$
}\end{definition}

Note that, for $1\leq\sigma\leq e$, $s(p(\sigma), d+\sigma)=0$ and $u(p(\sigma), d+\sigma)=2^{\sigma -1}(d+3)-2$, so that $E(p(\sigma), d+\sigma)$ reduces to the term  $\varepsilon_\sigma\cdot (0,0)$ in Theorem~\ref{T1}. Thus, Theorem~\ref{T1} is a particular case of the following more complete result:

\begin{theorem}
\label{T1bis} $E(i,j)=0$, for $(i,j)\in J$.
\end{theorem}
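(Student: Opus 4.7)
The plan is a double induction over $J$, with an outer induction on $\sigma=\sigma(i,j)$ running from $1$ to $e$ and an inner induction on $j$ within each band $\{(i,j)\in J\;|\;\sigma(i,j)=\sigma\}$. The base case is $(i,j)=(0,d+1)$: here $s(0,d+1)=0$ and $u(0,d+1)=d+1$, so $E(0,d+1)=2^{e-1}v_1^{d+1}(0,0)$, which vanishes by (\ref{EQ}), itself an iterated application of~(\ref{ord}). This anchor sits in the $\sigma=1$ band and feeds every subsequent reduction.

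The heart of the argument lies in the identity~(\ref{E5}), which is engineered so that specializing its running indices $(j,\ell)$ to $(\sigma,j)$ rewrites its left-hand side as exactly $E(i,j)$ when $j>\sigma$: indeed $2^{k-\ell}=2^{k-j}$ and $2^{j-1}(\ell+3-j)-2=2^{\sigma-1}(j+3-\sigma)-2=u(i,j)$. Performing this specialization produces a sum over $t\in I_\sigma$ of terms of the form $2^{k-(t+j-\sigma)}\,v_1^{2^{\sigma-1}(j-\sigma)+g(\sigma-1)+g(t)}\,((s(i,j)+g(\sigma)-g(t)))$. For $t<\sigma$, the exponents match Definition~\ref{D4} at the pair $(i',j')$ with $j'=t+j-\sigma<j$ and the same $\sigma$-value, so these summands fall under the inner inductive hypothesis. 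For $t>\sigma$, the power of $v_1$ is large enough that the congruence (\ref{Fml}) of Proposition~\ref{P3} applies; iterating it trades $v_1$-powers for factors of $2$ and, after bookkeeping, one sees that the resulting summands are covered by pairs belonging to strictly lower $\sigma$-bands, already handled by the outer induction.

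The remaining pairs have $j\le\sigma$, where $u(i,j)=2^j+2^{\sigma-1}-2$. When $j=\sigma$ the two cases of Definition~\ref{D4} agree, and the exponent of $v_1$ matches the pattern $2^j+2^{j-1}-2$ of~(\ref{Fml}); so Proposition~\ref{P3} rewrites $E(i,\sigma)$ directly as a combination of terms that (after matching) correspond to $E(i',j')$ with $j'>\sigma$, already settled by the first half of the inner induction. For $j<\sigma$, I first apply the second-factor relation~(\ref{dereferencia}) with index $j$ to push $E(i,j)$ into a combination of terms whose $v_1$-exponent makes either~(\ref{Fml}) or~(\ref{E5}) applicable, reducing once more to pairs already handled.

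The hard part will not be any single computation but the combinatorial verification that every reduction stays inside $J$ and strictly decreases the chosen order on pairs. One must track, for each congruence applied, the new powers of $2$ and $v_1$, the new total degree $a+b$ of the generator, and the resulting $\sigma'$, and then match these quantities against Definition~\ref{D4} to identify the output with a specific $E(i',j')$. A useful simplification is that the symbol $((\cdots))$ has the same meaning in Definition~\ref{D4} and on the right-hand sides of~(\ref{Fml}) and~(\ref{E5})—namely an arbitrary linear combination of generators of the correct total degree—so the inductive hypothesis automatically applies to whatever combination appears after a reduction, and the bulk of the work reduces to verifying inequalities among the exponents $s(\cdot,\cdot)$ and $u(\cdot,\cdot)$.
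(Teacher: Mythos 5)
Your high-level strategy agrees with the paper's: match $E(i,j)$ against~(\ref{E5}), (\ref{Fml}), and~(\ref{dereferencia}), identify the resulting summands with other classes $E(i',j')$, and close the argument by induction. But the induction you set up is not well-founded, for two connected reasons.

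First, the base case is wrong. You anchor at $(0,d+1)$, observing correctly that $E(0,d+1)=2^{e-1}v_1^{d+1}(0,0)=0$ by~(\ref{EQ}). But the reductions go the other way: in the $\sigma=1$ band, applying~(\ref{E5}) to $E(0,j)$ for $1<j\leq d+1$ produces $E(0,j-1)$ plus terms trivial by~(\ref{ord}); every step moves to \emph{lower} $j$. So the induction can only be anchored at the minimal element $(0,1)$, and $E(0,1)=0$ needs its own argument (as in the paper's Lemma~\ref{L5}, which uses~(\ref{Fml}) together with the first relation of~(\ref{ord})), not~(\ref{EQ}). Knowing the top element of the band vanishes is a consequence, not a starting point.

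Second, the order ``outer on $\sigma$, inner on $j$'' does not control the reductions, because for $\sigma\geq2$ a band $\{p(\sigma-1)<i\leq p(\sigma)\}$ contains many values of $i$, and the reductions move $i$. Concretely, in the case $\sigma<j\leq d+\sigma$ the summands with $t>\sigma$ become $T$-multiples of $E(i-1,\,j+t-\sigma)$; here $j+t-\sigma>j$, and if $i-1>p(\sigma-1)$ this pair lies in the \emph{same} $\sigma$-band at \emph{larger} $j$. So your claim that these land in ``strictly lower $\sigma$-bands'' is false in general, and simultaneously your inner induction (which handles lower $j$ first, as your $t<\sigma$ analysis requires) cannot cover them. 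The same problem occurs for $j\leq\sigma$: applying~(\ref{Fml}) to $E(i,j)$ gives summands matching $E(i-1,s)$ with $s>j$, again lower $i$ and higher $j$. The resolution, which is what the paper uses, is to induct on the \emph{lexicographic order of $(i,j)$} on $J$: every reduction either decreases $i$, or keeps $i$ fixed and decreases $j$, and the base is the single minimal pair $(0,1)$. Your $\sigma$-band structure is a coarsening of this order that loses the $i$-information and so fails to terminate.

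Apart from the ordering, your specialization of the indices in~(\ref{E5}) to get the $\sigma<j$ case is exactly right, and your closing remark that the double parentheses make the inductive hypothesis automatically applicable to whatever combination appears is an accurate observation that the paper also relies on. But without the correct well-founded order and the correct base case, the argument as written does not close.
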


The main computational task in this section is to show that, for any $(i,j)\in J$ and in terms of the convention set up in~(\ref{Fml}),
\begin{equation}
\label{G}
E(i,j)\equiv \sum E(i', j') 
\end{equation}
where the summation runs over (some of) those $(i', j')\in J$ which, in the lexicographic order of $J$, satisfy $(i', j')<(i,j)$. Theorem~\ref{T1bis} will then be a consequence of:

\begin{lema}
\label{L5}
$E(0,1)=0$.
\end{lema}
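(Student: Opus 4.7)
The plan is straightforward: unpacking Definition~\ref{D4} at $(i,j)=(0,1)$, which falls in the range $\sigma=1$, $j=\sigma$, one reads off $s(0,1)=d$ and $u(0,1)=1$, so $E(0,1)=2^{k-1}v_1((d))$ where $d=k-e\geq 1$. Since $((d))$ stands for an arbitrary $ku_*$-linear combination of generators $(a,b)$ with $a+b=d$ and $a,b\geq 0$, by linearity it suffices to prove $2^{k-1}v_1(a,b)=0$ for each such pair, and I would split the argument into two cases according as $a\geq 1$ or $a=0$.

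For $a\geq 1$ (so $b\leq d-1$), the second relation of~(\ref{ord}) applied to the second tensor factor gives $2^{e+b}(b)=0$ in $ku_*(e)$, hence $2^{e+b}(a,b)=0$ in the tensor product. Because $a\geq 1$ forces $e+b\leq e+d-1=k-1$, multiplying through by $2^{k-1-(e+b)}v_1\in ku_*$ immediately yields $2^{k-1}v_1(a,b)=0$.

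For the remaining boundary case $a=0$, $b=d$, the above 2-adic bound is one power of $2$ short, so I would instead imitate the telescoping chain of~(\ref{EQ}): apply~(\ref{ord}) in the second factor at $i=d$ to trade a $v_1$ for a $2$, getting $2^{e+d-1}v_1(d)=2^{e+d}(d+1)=2^k(d+1)$, i.e., $2^{k-1}v_1(0,d)=2^k(0,d+1)$. The bottom relation $2^k(0)=0$ in $ku_*(k)$, coming from the first half of~(\ref{ord}), then closes the case.

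No real obstacle is expected: the whole argument uses nothing beyond~(\ref{ord}) in each of the two tensor factors. The sole subtlety is the boundary $a=0$, where the naive order bound is exactly insufficient and one must first exchange $v_1$ for a factor of $2$ via~(\ref{ord}) before being able to invoke the bottom relation $2^k(0)=0$ of the first factor.
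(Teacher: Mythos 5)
Your proof is correct, but it takes a genuinely different route from the paper. The paper's argument invokes Proposition~\ref{P3} in the form~(\ref{Fml}) with $j=1$ to rewrite $E(0,1)=2^{k-1}v_1((d))$ as a congruence modulo the $T$-module spanned by terms $2^{k-s}v_1^{g(s)}((d+2-2^s))$ for $s\geq 2$, and then kills each such term with the first relation in~(\ref{ord}) using the inequality $d+2-2^s+e\leq k-s$. You avoid Proposition~\ref{P3} entirely and argue elementwise from~(\ref{ord}) alone: for generators $(a,b)$ with $a+b=d$, the case $a\geq 1$ is handled by the order bound in the second tensor factor (since $e+b\leq k-1$ forces $2^{k-1}(a,b)=0$), and the single boundary case $(0,d)$ is exactly the last step of the telescope displayed in~(\ref{EQ}), trading $v_1$ for a $2$ and landing on $2^k(0,d+1)=0$. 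Your version is more self-contained (it does not presuppose the machinery of Proposition~\ref{P3}), while the paper's fits the inductive scheme~(\ref{G}) that drives the rest of the section; both are valid. One small slip in attribution: when you write ``the second relation of~(\ref{ord}) gives $2^{e+b}(b)=0$,'' you mean the \emph{first} relation $2^{e+i}(i)=0$; the second relation $2^{e+i-1}v_1(i)=2^{e+i}(i+1)$ is the one you correctly use in the $a=0$ case.
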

\begin{proof}
$E(0,1)=e^{k-1}v_1 ((d))\equiv\sum^k_{s\geq 2}2^{k-s}v^{g(s)}_1 ((d+2-2^s))$ in view of~(\ref{Fml}). But, in view of the first relation in~(\ref{ord}), each of these summands is trivial as $d-2^s+2+e\leq k-s$ for $s\geq 2$.
\end{proof}

The remainder of this section is devoted to establishing~(\ref{G}). As a way of example, we first complete the analysis for the case $i=0$ (that is $\sigma =1$) started in the proof of Lemma~\ref{L5}. Thus, for $1<j\leq d+1$, (\ref{E5}) yields
$$
E(0,j) =2^{k-j}v^j_1 ((d+1-j))=\sum_{s\in I_1} 2^{k+1-j-s}v^{j-1+g(s)}_1((d+3-j-2^s)),
$$
where $I_1$ is defined in~(\ref{dereferencia}). The term with $s=0$ corresponds to $E(0,j-1)$ whereas, as in the proof of Lemma~\ref{L5}, the terms with $s\geq 2$ are trivial in view of~(\ref{ord}).

\begin{proof}[Proof of~\emph{(\ref{G})} in the general case]
Suppose $p(\sigma -1)<i\leq p(\sigma)$, with $\sigma \geq 2$, and consider $E(i,j)$ as given in Definition~\ref{D4} (so that $1\leq j\leq d+\sigma)$.

\vglue .25cm
\noindent
{\bf Case $1\leq j\leq \sigma$}. From~(\ref{Fml}) we have
\begin{eqnarray*}
E(i,j)&=&2^{k-j}v^{g(j)+g(\sigma -1)}_1 ((d+1+i-2^{j-1}))\\
&=&\sum^k_{s>j} 2^{k-s}v^{g(s)+g(\sigma -1)}_1 ((d+1+i+2^{j-1}-2^s)).
\end{eqnarray*}
The summation really runs for $j<s<d+\sigma$ since, when $s\geq d+\sigma$ (and given that $i\leq p(\sigma)$ and $j\leq \sigma$), one actually has $d+1+i+2^{j-1}-2^s<0$. But in this restricted range, one easily checks that the $s$-th summand in the last summation is a $T$-multiple of $E(i-1,s)$. Of course such a fact has to be verified by dividing into cases, depending on whether $s\geq \sigma$ or $s\leq \sigma$, and whether $i-1>p(\sigma -1)$ or $i-1=p(\sigma -1)$. The actual verifications are left to the reader and, for the sake of illustration, we only sketch a representative situation. When $\sigma \leq s$ and $i-1>p(\sigma -1)$ one has
$$
E(i-1, s) =2^{k-s}v^{2^{\sigma -1}(s+3-\sigma)-2}_1 ((d+i-2^{\sigma -1}(s-\sigma +1)))
$$
so that we only need to verify that $d+1+i+2^{j-1}-2^s\leq d+i-2^{\sigma -1}(s-\sigma +1)$ (for which it is convenient to keep in mind that, in the present situation, $j\leq \sigma \leq s$ with $j<s$) and that $2^{\sigma -1}(s+3-\sigma)-2\leq g(s)+g(\sigma -1)$.
\vglue .25cm
\noindent
{\bf Case $\sigma < j\leq d+\sigma$}. Using~(\ref{E5}) we now have
\begin{eqnarray*}
\lefteqn{E(i,j)= 2^{k-j} v^{2^{\sigma-1}(j+3-\sigma)-2}_1((d+1+i-2^{\sigma-1}
(j-\sigma +1)))}\\
&\!\!\!\!=&\hspace{-3mm}\sum_{s\in I_\sigma}2^{k+\sigma-j-s}
v^{2^{\sigma-1}(j-\sigma)+g(\sigma-1)+g(s)}_1((d\hspace{.4mm}{+}\hspace{.4mm}1\hspace{.4mm}{+}\hspace{.4mm}i\hspace{.4mm}{-}\hspace{.4mm}2^{\sigma-1}(j\hspace{.4mm}{-}\hspace{.4mm}\sigma\hspace{.4mm}{+}\hspace{.4mm}1)\hspace{.4mm}{+}\hspace{.4mm}2^\sigma\hspace{.4mm}{-}\hspace{.4mm}2^s)).
\end{eqnarray*}
This time the summation is relevant only for $j+s-\sigma \leq d+\sigma -1$ because, otherwise, the summands have a (({\it negative}))-part. One then verifies (again taking into consideration suitable cases, so that Definition~\ref{D4} is applied correctly) that summands with $s<\sigma$ (so that $1\leq j+s-\sigma <j$) are $T$-multiples of $E(i, j+s-\sigma)$, whereas summands with $s>\sigma$ are $T$-multiples of $E(i-1, j+s-\sigma)$.
\end{proof}

\section{Proof of Theorem~\ref{contribucion}}\label{seco}
In this section $k\geq e$ are fixed, and attention is focused on the $ku_*$-annihilator ideal of the toral class $\tau=\tau_{k,e}\in ku_2B(\mathbb{Z}_{2^k}\times\mathbb{Z}_{2^e})$.
\begin{proposition}\label{PAx}
Let $2^{p_i}v_{1}^{q_i}$, $i=1,\dots,T$, be elements annihilating $\tau$ such that no $2^{p_i}v_{1}^{q_i-1}$ annihilates $\tau$, $i=2,3,\dots,T$. If $p_{i+1}=p_i-1$, $\,p_T=0$, and $\,0=q_1<q_2<\cdots<q_T$, then in fact $\mathrm{Ann}(\tau)=(2^{p_i}v_{1}^{q_i})_{i=1,\dots,T}$.
\end{proposition}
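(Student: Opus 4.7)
The plan is to exploit the fact that $ku_* = \mathbb{Z}[v_1]$ is polynomial in a single degree-$2$ generator, so the annihilator ideal $\mathrm{Ann}(\tau)\subset ku_*$ is homogeneous and, in each degree $2q$, is an ideal of $\mathbb{Z}\cdot v_1^q\cong\mathbb{Z}$. Hence it has the form $2^{a_q}\mathbb{Z}\cdot v_1^q$ for a well-defined $a_q\geq 0$. (The $a_q$ are finite: the hypothesis $p_T=0$ combined with $2^{p_T}v_1^{q_T}\tau=0$ gives $v_1^{q_T}\tau=0$, so $a_q=0$ for $q\geq q_T$.) The strategy is to determine the whole sequence $(a_q)_{q\geq 0}$ from the hypotheses and then recognize $\mathrm{Ann}(\tau)$ as the ideal generated by the listed elements.

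First I would record the monotonicity $a_{q+1}\leq a_q$: multiplying any relation $2^{a_q}v_1^q\tau=0$ by $v_1$ yields $2^{a_q}v_1^{q+1}\tau=0$. Next, combining the hypotheses $2^{p_i}v_1^{q_i}\tau=0$ and $2^{p_i}v_1^{q_i-1}\tau\neq 0$ (the latter for $i\geq 2$) with the arithmetic progression $p_i = p_1-(i-1)$, one obtains
\[
a_{q_i}\leq p_i \quad\text{and}\quad a_{q_i-1}\geq p_i+1 = p_{i-1}.
\]
Monotonicity then forces $a_q=p_{i-1}$ for every $q_{i-1}\leq q\leq q_i-1$; setting $q_{T+1}=\infty$, this reads $a_q=p_i$ on $q_i\leq q\leq q_{i+1}-1$, which completely describes $\mathrm{Ann}(\tau)$.

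Finally I would verify the two inclusions. The containment $(2^{p_i}v_1^{q_i})_{i=1,\dots,T}\subseteq\mathrm{Ann}(\tau)$ is immediate from the hypotheses. For the reverse, since the ideal is homogeneous it suffices to treat a homogeneous element $c\,v_1^q\in\mathrm{Ann}(\tau)$; then $2^{a_q}\mid c$, and choosing the largest $i$ with $q_i\leq q$ gives $a_q=p_i$, so
\[
c\,v_1^q \;=\; (c/2^{p_i})\,v_1^{q-q_i}\cdot 2^{p_i}v_1^{q_i} \;\in\; (2^{p_j}v_1^{q_j})_{j=1,\dots,T},
\]
which finishes the argument.

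The whole proof is essentially bookkeeping rather than serious mathematics. The one point that warrants a moment of care is ensuring that the two bounds $a_{q_i}\leq p_i$ and $a_{q_i-1}\geq p_{i-1}$, combined with monotonicity, actually pin down $a_q$ on the \emph{entire} interval $[q_i,q_{i+1}-1]$; this is exactly where the chain condition $p_{i+1}=p_i-1$ and the terminal condition $p_T=0$ enter, guaranteeing that no value $a_q$ is left undetermined and that the final interval extends to infinity.
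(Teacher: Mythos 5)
Your proof is correct and takes essentially the same approach as the paper: both reduce to homogeneous elements, use $2^{p_1}\tau=0$ to see that the degree-$2q$ part of $\mathrm{Ann}(\tau)$ is generated by a single power of $2$ times $v_1^q$, and exploit the non-annihilation hypothesis to bound the $v_1$-exponent from below. The only presentational difference is that you first tabulate the entire exponent function $a_q$ (via monotonicity) before concluding, whereas the paper's proof handles a given homogeneous element $2^mv_1^n$ directly by matching $m$ with the appropriate $p_\ell$ and deriving $n\geq q_\ell$ by contradiction.
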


\begin{proof}Let $f\in \mathrm{Ann}(\tau)$ be a 
homogeneous element. Since $2^{p_1}\tau=0$, we can assume $f=2^mv_{1}^n$.
If $m\geq p_1$, then evidently $2^mv_{1}^n \in (2^{p_i}v_{1}^{q_i})_{i=1,\dots,T}$. 
Assume $m=p_\ell$, $\ell=2,\dots,T$. If $n<q_\ell$, 
then $2^{p_\ell}v_{1}^{q_\ell-1}$ is a multiple of
$2^mv_{1}^n \in \mathrm{Ann}(\tau)$ and therefore we would have
$2^{p_\ell}v_{1}^{q_\ell-1}\in \mathrm{Ann}(\tau)$, which contradicts the hypothesis.
Thus we must have $n\geq q_\ell$, so that $f=2^mv_{1}^n$ is a multiple of
$2^{p_\ell}v_{1}^{q_\ell}$, and evidently $f\in (2^{p_i}v_{1}^{q_i})_{i=1,\dots,T}$.
\end{proof}

This section's strategy is to work directly in the free $ku_*$-module 
$F$ with basis the pairs $(i,j)$, $i,j \geq 0$, and observe that 
Conjecture~\ref{C} can be proved 
by checking that it is not possible 
to have in $F$ relations of the form 
\begin{equation}\label{F4}
2^{e-j}v_1^{f(j)-1}(0, 0) =
\sum_{\alpha,\beta}c_{\alpha,\beta}g_{\alpha,\beta} +
\sum_{\alpha,\beta}d_{\alpha,\beta}h_{\alpha,\beta},\quad 
c_{\alpha,\beta}, d_{\alpha,\beta}\in\mathbb{Z}
\end{equation}
where $f(j)=2^{j-1}(k-e+3)-2$,
\begin{equation}\label{F5}
g_{\alpha,\beta} = \binom{2^e}{1}v_1^{f(j)-\alpha-\beta-1}(\alpha,
\beta) + \binom{2^e}{2}v_1^{f(j)-\alpha-\beta}(\alpha, \beta-1) +
\cdots,
\end{equation}
\begin{equation}\label{F6}
h_{\alpha,\beta} = \binom{2^k}{1}v_1^{f(j)-\alpha-\beta-1}(\alpha,
\beta) + \binom{2^k}{2}v_1^{f(j)-\alpha-\beta}(\alpha-1, \beta) +
\cdots,
\end{equation} and where the summations in~(\ref{F4}) run over indexes
$\alpha$ and $\beta$ subject to
\begin{equation}\label{F7}
\alpha, \beta \geq 0,\,\, \alpha+\beta < f(j).
\end{equation}
The 2-divisibility properties of binomial coefficients will play a crucial role in our arguments, and we record for future reference a well-known result stating the form in which we will make use of these 2-divisibility properties.

\begin{lema}\label{L1}
The functions $\alpha$ and $\nu$ defined respectively in~\emph{(\ref{topcases})} and~\emph{(\ref{previous2FG})} satisfy $\nu\binom{a}{b}=\alpha(b)+ \alpha(a-b)- \alpha(a)$. In particular, if $a,b,c$ are non-negative integers with $b\leq a$ and $1\leq c < 2^{a-b+1}$, the binomial coefficient $\binom{2^a}{c}$ is divisible by $2^b$ \emph{(}a preciser form of this assertion---not needed in the sequel---has already been noted at the beginning of Section~\emph{\ref{T1}}\emph{)}.
\end{lema}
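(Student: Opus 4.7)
The plan is to derive the formula $\nu\binom{a}{b}=\alpha(b)+\alpha(a-b)-\alpha(a)$ from Legendre's classical identity $\nu(n!) = n - \alpha(n)$, and then deduce the divisibility claim by analyzing the binary expansion of $2^a - c$ when $c < 2^{a-b+1}$.

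First I would recall Legendre's formula: writing $\nu(n!) = \sum_{i \geq 1}\lfloor n/2^i\rfloor$ and using $n = \sum \epsilon_i 2^i$ with $\epsilon_i \in \{0,1\}$, a standard telescoping gives $\nu(n!) = n - \alpha(n)$. Applied to $\binom{a}{b} = a!/(b!(a-b)!)$, this immediately yields
\[
\nu\tbinom{a}{b} = \bigl(a-\alpha(a)\bigr) - \bigl(b-\alpha(b)\bigr) - \bigl((a-b)-\alpha(a-b)\bigr) = \alpha(b) + \alpha(a-b) - \alpha(a),
\]
which is the first claim.

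For the second claim, I would specialize to $\binom{2^a}{c}$. Since $\alpha(2^a) = 1$, the formula gives $\nu\binom{2^a}{c} = \alpha(c) + \alpha(2^a - c) - 1$, so I need $\alpha(c) + \alpha(2^a - c) \geq b+1$ whenever $1 \leq c < 2^{a-b+1}$. Write $c = \sum_{i \in S} 2^i$ with $S \subseteq \{0, 1, \ldots, a-b\}$ non-empty, and let $j := \min S$, so $0 \leq j \leq a-b$. A direct subtraction in binary yields
\[
2^a - c \;=\; 2^j(2^{a-j}-1) - \sum_{i \in S\setminus\{j\}} 2^i \;=\; \sum_{k=j}^{a-1} 2^k \,-\, \sum_{i \in S\setminus\{j\}} 2^i,
\]
and because $S \setminus \{j\} \subseteq \{j+1, \ldots, a-b\} \subseteq \{j+1, \ldots, a-1\}$, this is the binary expansion of $2^a - c$, with exactly $(a-j) - (|S| - 1)$ ones. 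Hence $\alpha(c) + \alpha(2^a-c) = |S| + (a-j-|S|+1) = a - j + 1 \geq a - (a-b) + 1 = b + 1$, as desired.

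I do not anticipate any real obstacle, since the argument rests only on the well-known Legendre identity and an elementary binary-subtraction calculation. The only mildly subtle point is verifying that the expression for $2^a - c$ above really is its binary expansion (i.e.\ that the two sums do not overlap), which is guaranteed by the hypothesis $c < 2^{a-b+1}$, forcing every element of $S \setminus \{j\}$ to lie strictly between $j$ and $a$.
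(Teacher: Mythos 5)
Your proof is correct. The paper does not actually prove Lemma \ref{L1}---it is recorded as ``a well-known result''---so there is no argument to compare against, but your derivation is the natural one: Legendre's identity $\nu(n!)=n-\alpha(n)$ gives $\nu\binom{a}{b}=\alpha(b)+\alpha(a-b)-\alpha(a)$ at once, and your binary-subtraction count reduces the second assertion to $\alpha(c)+\alpha(2^a-c)\ge b+1$. Two small remarks. First, the inclusion $S\setminus\{j\}\subseteq\{j+1,\dots,a-1\}$ tacitly assumes $b\ge 1$ (equivalently $c\le 2^a$); when $b=0$ the hypothesis $c<2^{a-b+1}=2^{a+1}$ permits $c\ge 2^a$, but then the conclusion is vacuous since every integer is divisible by $2^0$, so nothing is lost. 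Second, the computation can be phrased equivalently via Kummer's theorem, which the paper does invoke later in Lemma \ref{aux2}: $\nu\binom{2^a}{c}$ is the number of base-$2$ carries in $c+(2^a-c)$, and since the lowest set bit of $c$ is at position $\nu(c)\le a-b$, the carry propagates from there up to position $a-1$, giving $\nu\binom{2^a}{c}=a-\nu(c)\ge b$. This also recovers the ``preciser form'' mentioned at the start of Section~\ref{S4}; your identity $\alpha(c)+\alpha(2^a-c)=a-j+1$ with $j=\nu(c)$ is exactly this statement in disguise.
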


\medskip
Throughout the 
rest of the section we will assume, to reach a contradiction, that
an equation of the form~(\ref {F4}) has been given with $j\leq2$.
The next two propositions will be central.

\begin{proposition}\label{P1}
\begin{equation}
\nu(c_{\alpha,\beta})\geq \left\{\begin{array}{ll} \alpha+\beta, &
\textrm{if}\quad j=1,
\\ \alpha+\beta-d-3, & \textrm{if} \quad j = 2. \nonumber
\end{array}\right.
\end{equation}
\end{proposition}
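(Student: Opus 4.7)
The plan is to extract a system of scalar relations from~(\ref{F4}) by comparing coefficients of the basis elements $v_1^N(\alpha_0, \beta_0)$ of $F$ viewed as a free $\mathbb{Z}$-module, and then to run a downward induction on $N = \alpha_0 + \beta_0$. A direct inspection of~(\ref{F5}) shows that the $s$-th summand of $g_{\alpha,\beta}$ is $\binom{2^e}{s}v_1^{f(j)-\alpha-\beta+s-2}(\alpha, \beta-s+1)$, so that it contributes to the basis pair $(\alpha_0, \beta_0)$ exactly when $\alpha = \alpha_0$ and $\beta = \beta_0+s-1$; in that case the $v_1$-exponent is forced to equal $f(j)-\alpha_0-\beta_0-1$. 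A parallel statement holds for $h_{\alpha,\beta}$ via~(\ref{F6}). Extracting the coefficient of $v_1^{f(j)-\alpha_0-\beta_0-1}(\alpha_0,\beta_0)$ from both sides of~(\ref{F4}), for any $(\alpha_0, \beta_0) \neq (0, 0)$ with $\alpha_0 + \beta_0 < f(j)$, should give
$$0 \;=\; \sum_{s\ge 1} c_{\alpha_0,\,\beta_0+s-1}\binom{2^e}{s} \;+\; \sum_{s\ge 1} d_{\alpha_0+s-1,\,\beta_0}\binom{2^k}{s},$$
each sum being restricted by~(\ref{F7}) to $s \leq f(j) - \alpha_0 - \beta_0$.

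With these equations in hand, I would proceed by downward induction on $N = \alpha_0 + \beta_0$, starting from the maximal value $N = f(j) - 1$. At the top level only the $s=1$ terms survive, so the equation reduces to $c_{\alpha_0,\beta_0}\cdot 2^e = - d_{\alpha_0,\beta_0}\cdot 2^k$, whence $\nu(c_{\alpha_0,\beta_0}) \geq d = k-e$; this matches the claim since $N = d$ when $j=1$ and $N - d - 3 = d$ when $j=2$. For the inductive step, I would isolate $c_{\alpha_0, \beta_0}\cdot 2^e$ on one side and estimate the 2-adic valuations of the remaining terms. For the $d$-terms, Lemma~\ref{L1} asserts $2^b \mid \binom{2^k}{s}$ whenever $s < 2^{k-b+1}$; applying it with $b = e + N$ (when $j=1$) or $b = e + N - d - 3$ (when $j=2$) reduces to the elementary check $f(j) - N < 2^{f(j)-N}$ for $N < f(j)$. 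Crucially, no a priori control on $\nu(d_{\alpha,\beta})$ is used---the 2-divisibility is entirely supplied by $\binom{2^k}{s}$. For the $c$-terms with $s \geq 2$ the induction hypothesis combined with the estimate $\nu\binom{2^e}{s} \geq e - \lfloor\log_2 s\rfloor$ (Lemma~\ref{L1} again) together with the elementary inequality $s - 1 \geq \lfloor\log_2 s\rfloor$ for $s \geq 2$ gives the required bound.

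Putting these estimates together yields $\nu(c_{\alpha_0,\beta_0}\cdot 2^e) \geq e + N$ in the case $j=1$, and $\geq e + N - d - 3$ in the case $j = 2$, so $\nu(c_{\alpha_0,\beta_0})$ satisfies the advertised inequality. The main obstacle I anticipate is the bookkeeping for the $d$-terms: one must verify that the factor $\binom{2^k}{s}$ alone carries enough 2-divisibility to absorb them with no recursive information on $d_{\alpha,\beta}$ available, which succeeds only because $k \geq e$ and the range of $s$ is bounded by $f(j) - N$. The rest of the argument then amounts to a routine comparison of exponents inside the induction.
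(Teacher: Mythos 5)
Your proposal is correct and takes essentially the same approach as the paper: both proofs extract the coefficient of each basis element $(\alpha_0,\beta_0)$ from~(\ref{F4}), obtain the same scalar relations, and run a downward induction on $\alpha_0+\beta_0$, with the $h$-contributions controlled purely by $\nu\binom{2^k}{s}$ via Lemma~\ref{L1} and the $g$-contributions controlled by the inductive hypothesis on $\nu(c)$ plus $\nu\binom{2^e}{s}$. The only cosmetic difference is that you package the $g$-term estimate as $\nu\binom{2^e}{s}\geq e-\lfloor\log_2 s\rfloor$ combined with $s-1\geq\lfloor\log_2 s\rfloor$, whereas the paper applies Lemma~\ref{L1} directly with the shifted exponent $b=p+e-\alpha-s$; the two are equivalent.
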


\begin{proof} {\bf Case $j=1$}. We proceed by inverse induction on
$\alpha +\beta$. Notice that, in agreement with (\ref {F7}), the
range of interest is $0\leq\alpha +\beta\leq d$ (we keep the notation
$d=k-e$ introduced in the previous section). Let us focus
attention on a basis element $(\alpha, \beta)$ in $F$ 
with $\alpha + \beta=d$. In
these conditions the terms $g _{\alpha,\beta}$ and $h
_{\alpha,\beta}$ are the only ones contributing in~(\ref{F4})
with multiples of the basis element $(\alpha,\beta)$. The
contributions are $2^ec _{\alpha,\beta}$ and $2^kd
_{\alpha,\beta}$, respectively. Therefore, we should have $\nu (c
_{\alpha,\beta}) \geq k-e$.

\smallskip
We now assume $\nu (c _{\alpha',\beta'}) \geq \alpha'
+\beta'$ whenever $\alpha'+\beta'>p$ for some $p <d$, and demonstrate that 
$\nu(c_{\alpha,\beta})\geq p$ given $\alpha$ and $\beta$ with $\alpha
+\beta=p$. Again, we focus attention on the basis element $(\alpha,\beta)$
in $F$, and the way it can arise on the right hand side of~(\ref{F4}). 
By virtue of~(\ref {F6}), the contributions from the second summation
in~(\ref {F4}) must arise from summands having an index of the form
$(r,\beta)$, $r\geq \alpha$. For these summands we have
$$
\nu\binom{2^k}{r-\alpha+1}\geq p+e
$$
due to Lemma~\ref{L1} as
$r-\alpha+1=r +\beta-p+1\leq d-p+1 <2^{d-p+1}$ and $p+e\leq k\,$---the latter 
inequality holds because $p =\alpha +\beta\leq d=k-e$.
On the other hand, the contributions that arise from the
first summation in~(\ref{F4}) come from indexes of the form
$(\alpha, s)$, with $s\geq\beta$, and by induction satisfy the inequality
$\nu(c_{\alpha,s}) + \nu\binom{2^e}{s-\beta+1}\geq p+e$ for $s>
\beta$. Indeed,
$$
\nu\binom{2^e}{s-\beta+1}\geq p+e-\alpha-s \nonumber
$$
holds because the conditions of Lemma~\ref{L1}
are satisfied: $p+e-\alpha-s\leq e\,$ since $p =\alpha +\beta
<\alpha+s$, while $s-\beta+1 <2^{\alpha+s-p+1}$
since $s-\beta+1=s +\alpha-p+1 <2^{\alpha+s-p+1}$.
Therefore, when $s =\beta$ we must also have
$\nu(c_{\alpha, \beta}) + \nu\binom{2^e}{1} \geq p+e$; that is
\begin{eqnarray}\label{G3}
\nu(c_{\alpha,\beta})\geq p=\alpha+\beta.\nonumber
\end{eqnarray}

\medskip\noindent {\bf Case $j=2$}. 
We proceed by inverse induction  on
$\alpha +\beta$. This time the range under consideration
is $\,0\leq\alpha+\beta<f(2)
=2d+4$. So, to ground the induction, assume $\alpha +\beta=2d+3$. 
As in previous cases, we focus 
attention on the basis element $(\alpha, \beta)$, and the way it can arise 
on the right hand side of~(\ref{F4}). Only
$g_{\alpha,\beta}$ and $h_{\alpha,\beta}$ are relevant, and the
corresponding contributions to the summand $(\alpha,\beta)$ are
$2^ec_{\alpha,\beta}$ and $2^kd_{\alpha,\beta}$.
Therefore, we must have $\nu(c_{\alpha,\beta}) \geq k-e$.

\smallskip Now, we assume that $\nu(c_{\alpha',\beta'}) \geq \alpha'
+\beta'-d-3$ whenever $\alpha' +\beta'> p$ for some $p <2d+3$, and demonstrate
that $\nu(c_{\alpha,\beta}) \geq p-d-3$ when $\alpha +\beta=p$. We
focus attention on the basis element $(\alpha,\beta)$ in $F$. In view
of~(\ref{F6}), the contributions of the second summation in~(\ref
{F4}) must arise from summands with index of the form $(r,\beta)$,
$r\geq \alpha$. For these we have
$$
\nu\binom{2^k}{r-\alpha+1}\geq p+e-d-3
$$
because of Lemma~\ref{L1}
in virtue that $p+e-d-3\leq k$ since $p =\alpha +\beta\leq 2d+3=2
(k-e)+3$ and $r-\alpha+1=r +\beta-p+1\leq 2d+3-p+1<2^{2d-p+4}$.
On the other hand, the contributions from the first
summation of~(\ref {F4}) arise from indexes of the form $(\alpha,
s)$, with $s\geq \beta$, and by induction satisfy the inequality $\nu(c_{\alpha,
s}) + \nu\binom{2^e}{s-\beta+1} \geq p+e-d-3$ for $s> \beta$. Indeed,
$$
\nu\binom{2^e}{s-\beta+1}\geq p+e-\alpha-s
$$
holds because the conditions of Lemma~\ref{L1} are
satisfied: $p+e-\alpha-s\leq e$ since $p =\alpha +\beta\leq
\alpha+s$, while $s-\beta+1 <2^{\alpha+s-p+1}$
since $s-\beta+1 =\alpha+s-p+1 <2^{\alpha+s-p+1}$.
Therefore, when $s =\beta$ we must also have
$\nu(c_{\alpha,\beta}) + \nu\binom{2^e}{1} \geq p+e-d-3$, that is
$\nu(c_{\alpha,\beta})\geq p-d-3=\alpha+\beta-d-3$.
\end{proof}

\begin{proposition}\label{P2}
In the case $j=2$, the inequality of Proposition~\ref{P1} can be
improved to $\nu(c_{\alpha,\beta})\geq \alpha+\beta+i-d-3$ when the
following conditions are fulfilled:
\end{proposition}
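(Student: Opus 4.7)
The plan is to adapt the inverse induction on $\alpha+\beta$ used in the $j=2$ case of Proposition~\ref{P1}, extracting an additional factor of $2^i$ from the 2-adic valuations of the relevant binomial coefficients through a closer use of Lemma~\ref{L1} made possible by the extra hypothesized conditions. The overall strategy stays the same: isolate a basis element $(\alpha,\beta)\in F$ on the right-hand side of~(\ref{F4}), identify the summands of $g_{\alpha',\beta'}$ and $h_{\alpha',\beta'}$ that can produce a multiple of $(\alpha,\beta)$, and equate that total contribution with the coefficient of $(\alpha,\beta)$ on the left---which is nonzero only at $(\alpha,\beta)=(0,0)$.

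First I would set up the base of the induction at $\alpha+\beta=2d+3$, where only $g_{\alpha,\beta}$ and $h_{\alpha,\beta}$ contribute, yielding $2^e c_{\alpha,\beta}+2^k d_{\alpha,\beta}=0$; under the supplementary conditions one should be able to refine the range estimate on $r-\alpha+1$ (from the second summation) so that Lemma~\ref{L1} delivers $\nu\binom{2^k}{r-\alpha+1}\geq k-e+i$ rather than merely $k-e$, pinning down the stronger bound in this initial case. For the inductive step, I would fix $(\alpha,\beta)$ with $\alpha+\beta=p<2d+3$ meeting the conditions and analyze, on the one hand, contributions from $h_{r,\beta}$ with $r\geq\alpha$ through $\nu\binom{2^k}{r-\alpha+1}$, and on the other, contributions from $g_{\alpha,s}$ with $s>\beta$ through $\nu(c_{\alpha,s})+\nu\binom{2^e}{s-\beta+1}$ where the inductive hypothesis would provide $\nu(c_{\alpha,s})\geq \alpha+s+i-d-3$. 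Combining these two families of estimates with the case-$j=2$ template from Proposition~\ref{P1} should isolate the contribution from $s=\beta$, namely $\nu(c_{\alpha,\beta})+\nu\binom{2^e}{1}$, and force $\nu(c_{\alpha,\beta})\geq p+i-d-3$.

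The main obstacle will be verifying that the stipulated conditions are preserved by the induction, i.e.~that whenever $(\alpha,\beta)$ satisfies them, every $(\alpha,s)$ with $s>\beta$ relevant in the first summation of~(\ref{F4}) still satisfies them (so the stronger inductive hypothesis applies rather than only the weaker Proposition~\ref{P1}), and symmetrically that the $h$-side indices $(r,\beta)$ do not force us outside the region where the sharper binomial estimate holds. If the conditions cut by the value of $\beta$ or by the distance of $\alpha+\beta$ from $2d+3$, this monotonicity is automatic; otherwise, a small auxiliary lemma---analogous to the range checks in Lemma~\ref{L1}---will be needed. A secondary technical issue is that near the top of the range the sharper bound on $\nu\binom{2^k}{r-\alpha+1}$ may degrade by one; this I would handle by treating a narrow boundary layer of $\alpha+\beta$ as a refined base case rather than as part of the general inductive step.
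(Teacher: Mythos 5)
Your single inverse induction on $\alpha+\beta$ cannot close. The crux is the treatment of contributions to the chosen slot coming from $g_{\alpha,s}$ with $s>\beta$. You propose to invoke the inductive hypothesis $\nu(c_{\alpha,s})\geq \alpha+s+i-d-3$, but that estimate is only available when the triple $(i,\alpha,s)$ itself satisfies conditions (a)--(c). Condition (a) caps $\alpha+s$ at $2d-2i+4$; yet (\ref{F7}) allows $\alpha+s$ as large as $2d+3$, so there is a genuine band $2d-2i+4<\alpha+s\leq 2d+3$ where your inductive hypothesis with the same $i$ simply does not apply, and falling back to Proposition~\ref{P1} there does not deliver the factor of $2^i$ you need. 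You flag this as ``the main obstacle'' and wave at a ``small auxiliary lemma,'' but no local range check can repair it: what the paper does is a \emph{double} induction, first on $i$ and then on $\alpha+\beta$, and it handles the problematic band by a recursive mechanism that progressively replaces the triple $(i,\alpha,s)$ by $(i-1,\alpha,s)$, $(i-2,\alpha,s)$, \dots, at each step trading one unit of $i$ for one extra unit of $2$-divisibility of the relevant binomial coefficient (via a sharpening of the inequality $2^m\geq 2m$ to $2^m>m+j$). This ladder on $i$ is a structural feature of the argument, not an auxiliary lemma, and your scheme has no analogue of it.

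A second, independent issue: you propose to isolate the coefficient of the basis element $(\alpha,\beta)$ itself. The paper instead isolates $(\alpha,\beta-1)$, which the hypotheses guarantee is distinct from $(0,0)$, so the total coefficient vanishes. This choice is not cosmetic: the relevant $h$-side contribution then comes from indices $(r,\beta-1)$ with $r-\alpha+1\leq 2d-\alpha-\beta+5$, and the inequality $2d-\alpha-\beta+5<2^{2d-\alpha-\beta-i+5}$ (which ultimately reduces to $\alpha+\beta\leq 2d-2i+4$) holds exactly up to the boundary of condition (a). If you isolate $(\alpha,\beta)$ instead, the analogous bound $2d-\alpha-\beta+4<2^{2d-\alpha-\beta-i+4}$ reduces to the strict inequality $\alpha+\beta<2d-2i+4$ and fails precisely at the boundary $\alpha+\beta=2d-2i+4$ permitted by (a). You acknowledge a degradation ``near the top of the range'' and propose a boundary base case, but without the shift to $(\alpha,\beta-1)$ this is not a narrow layer you can peel off --- it is a systematic loss. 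Finally, your proposed base at $\alpha+\beta=2d+3$ is vacuous under (a) (which forces $\alpha+\beta\leq 2d-2i+4<2d+3$ for $i\geq 1$), and at that level the only available relation $2^ec_{\alpha,\beta}+2^kd_{\alpha,\beta}=0$ cannot yield more than $\nu(c_{\alpha,\beta})\geq d$, so the claim that Lemma~\ref{L1} can be coaxed into delivering $\nu\binom{2^k}{r-\alpha+1}\geq k-e+i$ there is unfounded.
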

\begin{itemize}
\item[{(a)}] \ $d+4-i \leq \alpha+\beta \leq 2d-2i+4$.
\item[{(b)}] \ $\alpha \leq d+3-i$.
\item[{(c)}] \ $1 \leq  i \leq d$.
\end{itemize}

\begin{proof} We proceed by double induction, first on $i$ in a range
limited by (c), and then on $\alpha +\beta$ in the range marked by
(a). Let us consider a triple $(i,\alpha,\beta)$ satisfying
(a), (b), and (c), and assume inductively that the
proposition has been verified for any other triple
$(i',\alpha',\beta')$ that, beside fulfilling (a), (b), and (c),
satisfies one of the following conditions:
\begin{itemize}
\item[{(d)}] \ $i' < i $,
\item[{(e)}] \ $i'=i\,$ \textrm{ and } $\,\alpha'+\beta'<\alpha+\beta$.
\end{itemize}
Of course, the induction hypotheses associated with (d) and (e)
are empty at the start of the corresponding induction. 
We note that the three
conditions (a), (b), and (c) imply $\beta \geq 1$ and $\alpha+\beta \geq 4$.
In particular, $(\alpha,\beta-1)$ and $(0,0)$ are
different basis elements and, consequently, the total coefficient
$T$ with which $(\alpha,\beta-1)$ appears on the right hand side
of~(\ref{F4}) must be null. Then, in view
of~(\ref{F5}), the conclusion we need in order to close the
induction translates into verifying that the contribution to $T$
from the summand with index $(\alpha,\beta)$ in the first summation
of~(\ref{F4}) is divisible by
\begin{equation}\label{F24}
2^{\alpha+\beta+i+e-d-4}
\end{equation}
(notice that in view of (a), the exponent in (\ref{F24}) is at
least $e$). But since $T=0$, it suffices to verify that the rest of the
contributions to $T$ from the right hand side of~(\ref{F4}) are also divisible by~(\ref{F24}).

\medskip
By virtue of (\ref{F6}), the contributions of the second
summation in~(\ref{F4}) must arise from summands with index of the
form $(r,\beta-1)$, $r\geq \alpha$. For these it will be enough to
verify that $\binom{2^k}{r-\alpha+1}$ is divisible by~(\ref{F24}).
But this will be a consequence of Lemma~\ref{L1}, as soon as we
argue the inequalities
\begin{itemize}
\item[(f)] $\alpha+\beta+i+e-d-4 \leq k$,
\item[(g)] $r-\alpha+1 < 2^{k+d-\alpha-\beta-i-e+5}$.
\end{itemize}
The first condition is equivalent to $\alpha +\beta \leq
2d-i +4$ and, therefore, it is guaranteed by~(a). On the other
hand, in case of the index $(r, \beta-1)$, the second inequality 
in~(\ref{F7}) asserts that $r-\alpha+1 \leq 2d-\alpha-\beta+5$, and
since $2^m \geq 2m$ for every $m\in\mathbb {Z}$, (g) will follow
as soon as the inequality 
\begin{equation}\label{sacala}
2d-\alpha-\beta+5 <2 (k-\alpha-\beta-i-e+d+5)
\end{equation}
is justified. But an elementary arithmetic manipulation shows that the latter
inequality is equivalent to the second inequality in (a).

\smallskip It remains to consider the contributions coming from summands
in the first summation of~(\ref{F4}) having index of the
form $(\alpha, s)$, with $s \geq \beta-1$, $s\neq \beta$. For these
we have to verify
\begin{equation}\label{F25}
\nu(c_{\alpha,s})+\nu\binom{2^e}{s-\beta+2}\geq
\alpha+\beta+i+e-d-4.
\end{equation}
The case $s =\beta-1$ follows from the case (e) of the induction,
except for when $\alpha +\beta=d+4-i\,$---corresponding to the
beginning of the induction---in which case~(\ref{F25}) is obvious.
Then, throughout the rest of the proof we will assume
\begin{eqnarray}\label{F26}
s > \beta.
\end{eqnarray} Likewise, when $i=1$, Proposition~\ref{P1} implies
that~(\ref{F25}) is a consequence of the inequality $\nu\binom{2^e}{
s-\beta+2} \geq \beta+e-s$, which in turn follows easily from
Lemma~\ref{L1}. Thus we will also assume
\begin{equation}\label{F27}
i \geq 2.
\end{equation}
Furthermore, when $\alpha+s=2d+3$ (recall that the top limit
for $\alpha+s$ is set by (\ref{F7})), Proposition~\ref{P1}
asserts that (\ref{F25}) is a consequence of the inequality
$\nu\binom{2^e}{s-\beta+2} \geq \alpha +\beta+i+e-2d-4$, which in turn
follows from Lemma~\ref {L1}. Indeed, the conditions
\begin{itemize}
\item[{(j)}] \ $\alpha+\beta+i \leq 2d+4 $
\item[{(k)}] \ $s-\beta+2 < 2^{2d-\alpha-\beta-i+5}$
\end{itemize} required by the above-mentioned lemma are deduced
directly from (a)---as with~(\ref{sacala}), 
for (k) it is convenient to keep in mind
the simple inequality $2^m \geq 2m$ for every 
$m\in\mathbb{Z}$. Thus, we will assume in addition
\begin{equation}\label{F28}
\alpha+s < 2d+3.
\end{equation} 

Now, when $\alpha+s \leq 2d-2i+6$, the triple $(i-1,\alpha,s)$
satisfies the conditions (a), (b), and (c) of
Proposition~\ref{P2}. Then, the case (d) of the induction assures
that (\ref{F25}) is a consequence of the inequality $\nu\binom{2^e}{
s -\beta+2} \geq \beta+e-s$ which, just as in the considerations previous
to (\ref{F27}), follows directly from Lemma~\ref{L1}. Therefore, we
can now assume 
\begin{equation}\label{F29}
2d-2i+6 < \alpha+s.
\end{equation}
Note that (\ref{F28}) and (\ref{F29}) imply that (\ref
{F27}) specializes to $i \geq 3$,
while (\ref{F29}) and (a) imply that (\ref{F26})
specializes to
\begin{equation}\label{F30}
s > \beta+2.
\end{equation}

At this point we are in conditions to repeat the analysis
performed after (\ref{F28}): when $\alpha+s \leq 2d-2i+8$, and by
virtue of (\ref{F29}), the triple $(i-2,\alpha,s)$ satisfies the
conditions (a), (b), and (c) of Proposition~\ref{P2}, so the
clause (d) of the induction assures that (\ref{F25}) is a
consequence of the inequality $\nu\binom{2^e}{s-\beta+2} \geq
\beta+e-s+1$, which is guaranteed by Lemma~\ref{L1}---the
verification of the second hypothesis in such lemma uses (\ref{F30})
together with the fact that $2^m> m+2$, for $m> 2$. As a result, we can
assume that~(\ref{F29}) specializes to
\begin{eqnarray}
2d-2i+8 < \alpha+s. \nonumber
\end{eqnarray} 

Recursively, assume that, for some $j \geq 3$,
(\ref{F25}) has been proved except for
\begin{equation}\label{F31}
2(d-i+j+1) < \alpha+s.
\end{equation}
Note that (\ref{F28}) and (\ref{F31}) imply that $i\geq
j+1$; whereas (\ref{F31}) and (a) imply
\begin{equation}\label{F32}
s > \beta+2(j-1).
\end{equation}
Then, if (\ref{F31}) is satisfied together with the
inequality $\alpha+s \leq 2 (d-i+j+2)$, the triple $(i-j, \alpha,
s)$ fulfills the conditions (a), (b), and (c) in
Proposition~\ref{P2}. Therefore, the modality (d) of the induction
assures that (\ref{F25}) is a consequence of the inequality $\nu
\binom{2^e}{s-\beta+2} \geq \beta+e-s+j-1$, which in turn is
guaranteed by Lemma~\ref{L1} (the verification of the second
hypothesis in the aforementioned lemma uses (\ref{F32}) together with the
fact that $2^m> m+j$, for $m>j$). This closes the recursive
process (which is finite, in view of (\ref {F28})), concluding the
verification of (\ref{F25}) and, thus, the proof of Proposition~\ref{P2}.
\end{proof}

We are now ready to prove the
main result of this section, Conjecture~\ref{C} for $j \leq
2$. This result will be used in the proof of Proposition~\ref{xnyn}, which
is a key step in the proof of Theorem~\ref{CTC} and
Corollary~\ref{cotainferior}, our main applications to the
topological complexity of lens spaces.

\begin{proof}[Proof of Theorem~\emph{\ref{contribucion}}] 
The easy part is for $j=1$, for which we will see that all contributions
to the term $v_1^d(0,0)$ on the right-hand side of (\ref{F4}) have
integer coefficients divisible by $2^e$. The contributions that come
from the second summation arise from indexes of the form $(r,0)$
with $r\geq0$ and the corresponding coefficient is divisible by $\binom{2^k}{
r+1}$. But $\nu\binom{2^k}{r+1} \geq e$ in view of Lemma~\ref{L1}.
Indeed, the relation $r+1 <2^{k-e+1} $ is assured by (\ref{F7}),
that in our case is translated into $r\leq d=k-e$. On the other
hand, the contributions that come from the first summation
in~(\ref{F4}) arise from indexes of the form $(0,s)$ with $s\geq 0$
and they have coefficient $\binom{2^e}{s+1} c_{0, s}$. In such
cases, making use of Proposition~\ref{P1}, it suffices to
verify the inequality $\nu\binom{2^e}{s+1} \geq e-s$. But the latter
relation is an easy consequence of Lemma~\ref{L1}.

\medskip
The crux of the matter is dealing with
the case $j=2$. For organizational purposes we start by settling 
a few preliminary steps. 

\medskip\noindent{\bf Step 1}. Let us focus attention on the
multiples of a basis element of the form $(\alpha,\beta)$ with
$\alpha +\beta=2d+3$. As we have noticed before, 
such a basis element arises on the right hand side of~(\ref{F4}) only
from the summand with index $(\alpha,\beta)$, in both summations of
(\ref{F4}). Thus, we obtain the relation
\begin{eqnarray}\label{FA}
0=2^ec_{\alpha,\beta}+2^kd_{\alpha,\beta}.
\end{eqnarray} But in view of Proposition~\ref{P1}, the coefficient
$c_{\alpha,\beta}$ in (\ref{FA}) takes the form $c_{\alpha,\beta} =
2^dc'_{\alpha,\beta}$ for some integer $c'_{\alpha,\beta}$. In these
terms, the only information we need
from (\ref{FA}) is given by the mod-2 congruences
\begin{equation}\label{FB}
\begin{array}{rcl}
c'_{1,2d+2}+ d_{1,2d+2} & \equiv & 0,\\
c'_{2,2d+1}+ d_{2,2d+1} & \equiv & 0.
\end{array}
\end{equation}

\medskip\noindent{\bf Step 2}. Let us now focus attention on the way 
a basis element $(\alpha,\beta)$ with $\alpha+\beta=2d+2$ arises
on the right hand side of~(\ref{F4}).
The total coefficient of (the $v_1$-multiple
of) $(\alpha, \beta)$ is
$$
0=2^ec_{\alpha,\beta}+ \binom{2^e}{2}c_{\alpha,\beta+1}+2^kd_{\alpha,\beta}
+\binom{2^k}{2}d_{\alpha+1,\beta}.
$$
The first summand on the right hand side
of this expression vanishes modulo $2^k$ provided $0\leq\alpha\leq2$ because,
if $e <k$, Proposition~\ref{P2} (with the triple $(1,\alpha,\beta)$)
produces $\nu(c_{\alpha,\beta}) \geq d$. We thus have the congruences modulo $2^k$
\begin{eqnarray}\label{FD}
0\equiv \binom{2^e}{2}c_{\alpha,\beta+1}+ \binom{2^k}{
2}d_{\alpha+1,\beta},\quad\mbox{for $0\leq\alpha\leq2$}.
\end{eqnarray} Making use of the fact that $\nu\binom{2^q}{2}=q-1$,
together with the expression $c_{\alpha,\beta+1}=
2^dc'_{\alpha,\beta+1}$ described above, we see that (\ref{FD}) is
equivalent to the following three congruences modulo 2:
\begin{eqnarray}\label{FZ}
0\equiv c'_{0,2d+3}+ d_{1,2d+2},\nonumber\\
0\equiv c'_{1,2d+2}+ d_{2,2d+1},\nonumber\\
0\equiv c'_{2,2d+1}+ d_{3,2d}.\quad\, \nonumber
\end{eqnarray} Together with the congruences in (\ref{FB}), this leads
to the mod~2 congruence
\begin{eqnarray}\label{FE}
0\equiv c'_{0,2d+3}+ d_{3,2d}.
\end{eqnarray} 

\medskip\noindent{\bf Step 3 (conclusion of the proof)}. 
Let us consider the way in which the basis element $(0,2d)$ arises on the right hand 
side of~(\ref{F4}). Notice that this basis element appears on the left hand
side of (\ref{F4}) only when $e=k$. So, the total coefficient
with which (the $v_1^3$-multiple of) $(0,2d)$ appears in (\ref{F4}) is
\begin{eqnarray}\label{FF}
\,\hspace{8mm}2^{e-2}\delta(e,k)=2^ec_{0,2d}+ \binom{2^e}{2}c_{0,2d+1}+
\binom{2^e}{3}c_{0,2d+2} + \binom{2^e}{4}c_{0,2d+3}\nonumber \\
\quad +{\,} 2^kd_{0,2d}+\binom{2^k}{2}d_{1,2d}+\binom{2^k}{
3}d_{2,2d}+\binom{2^k}{4}d_{3,2d}
\end{eqnarray}
where $\delta(e,k)$ is Kronecker's delta.
We next show that each of the terms on the right hand side of
(\ref{FF}) vanishes modulo $2^{k-1}$, except perhaps for the
second, fourth, and last terms.

\medskip
\noindent {\bf Case of $2^ec_{0,2d}$}. The claim is obvious for
$e+1\geq k$. When $k> e+1$, Proposition~\ref{P2} (with the triple
$(i,\alpha,\beta) = (2,0,2d)$) produces $\nu(c_{0,2d}) \geq d-1$, so
that $\nu(2^ec_{0,2d}) \geq e+d-1=k-1$.

\smallskip
\noindent {\bf Case of $\binom{2^e}{3}c_{0,2d+2}$}. On one hand, we have
$\nu\binom{2^e}{3}=e$, and on the other, Proposition~\ref{P1}
implies that $\nu(c_{0,2d+2})\geq d-1$, so that $\nu(\binom{2^e}{3}
c_{0,2d+2}) \geq e+d-1=k-1$.

\smallskip
\noindent {\bf Case of the fifth, sixth, and seventh terms on the
right hand side of (\ref{FF})}. The affirmation is obvious since
$\nu\binom{2^k}{2}=k-1$ and $\nu\binom{2^k}{3}=k$.

\medskip
Therefore, modulo $2^{k-1}$, (\ref{FF}) reduces to
\begin{eqnarray}\label{FG}
2^{e-2}\delta(e,k)\equiv \binom{2^e}{2}c_{0,2d+1} + \binom{2^e}{
4}c_{0,2d+3}+\binom{2^k}{4}d_{3,2d}.
\end{eqnarray} 
Now, when $e=k$, the first term on the right
hand side of
(\ref{FG}) is trivial, whereas the rest of the binomial coefficients
are exactly divisible by $2^{k-2}$, so that (\ref{FG}) simplifies
to the congruence
\begin{eqnarray}\label{FY}
1\equiv c_{0,2d+3} + d_{3,2d}\nonumber
\end{eqnarray} modulo 2, which contradicts (\ref{FE}) and, thus, 
completes the proof of
Theorem~\ref{contribucion} for $e=k$. On the other hand,
when $k> e$, equation~(\ref{FG}) takes the form
\begin{eqnarray}\label{FH}
0\equiv \binom{2^e}{2}c_{0,2d+1} + \binom{2^e}{
4}c_{0,2d+3}+\binom{2^k}{4}d_{3,2d}
\end{eqnarray} modulo $2^{k-1}$. But in terms of the notation
$c_{0,2d+3} = 2^{d} c'_{0,2d+3}$ and the corresponding fact that
$c_{0,2d+1} = 2^{d-1} c'_{0,2d+1}$ with
$c'_{0,2d+1}$ an odd integer (justified by Lemma~\ref{C1} below),~(\ref{FH}) 
translates as
$0\equiv c'_{0,2d+1} + c'_{0,2d+3}+d_{3,2d}\equiv 1 +
c'_{0,2d+3}+d_{3,2d}$ modulo 2 which, again, contradicts (\ref{FE}).
\end{proof}

It only remains to prove that $c_{0,2d+1} = 2^{d-1} c'_{0,2d+1}$ with
$c'_{0,2d+1}$ an odd integer. 
Note that the inequality $\nu(c_{0,2d+1}) \geq d-1$ is
guaranteed for $d\geq 2$ by Proposition~\ref{P2} with 
$(i,\alpha,\beta) = (1,0,2d+1)$. Then, our aim is  to prove
that such inequality can be refined to an equality for $d\geq1$.
We prove in fact:
\begin{lema}\label{C1}
In the case $j=2$ of~\emph{(\ref{F4})}, we have
$\nu(c_{0,2\ell+1})=\ell-1$ for $\ell=1,2,\dots,d$.
\end{lema}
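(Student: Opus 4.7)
The plan is to prove Lemma~\ref{C1} by upward induction on $\ell$, using two different coefficient comparisons inside the hypothetical relation~(\ref{F4}). The base case $\ell=1$ comes from the coefficient of the basis element $(0,0)$ in $F$, whose left-hand side contribution is $2^{e-2}$; the inductive step $\ell\mapsto\ell+1$ (for $\ell\le d-1$) comes from the coefficient of $(0,2\ell)$, which vanishes on the LHS of~(\ref{F4}).

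For the base case, extracting from~(\ref{F4}) the coefficient of $(0,0)$ and dividing by $v_1^{f(2)-1}$ yields the integer identity
\[
2^{e-2}=\sum_{q=0}^{2d+3}\binom{2^e}{q+1}c_{0,q}+\sum_{\alpha=0}^{2d+3}\binom{2^k}{\alpha+1}d_{\alpha,0}.
\]
Combining the Proposition~\ref{P1}/Proposition~\ref{P2} lower bounds on $\nu(c_{0,q})$ (in particular $\nu(c_{0,q})\ge\lfloor q/2\rfloor-1$ for $4\le q\le 2d+2$) with Lemma~\ref{L1} for $\nu\binom{2^e}{q+1}$, a routine check shows that every $c$-summand has $2$-adic valuation at least $e-1$ except for $\binom{2^e}{4}c_{0,3}=2^{e-2}u\,c_{0,3}$ with $u$ an odd integer. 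The admissibility $\alpha\le 2d+3$ together with $\nu\binom{2^k}{\alpha+1}\ge k-\lfloor\log_2(\alpha+1)\rfloor$ likewise forces every $d$-summand to have valuation at least $e-1$ whenever $d\ge 1$. Reducing the identity modulo $2^{e-1}$ then gives $1\equiv u\,c_{0,3}\pmod 2$, so $c_{0,3}$ is odd, i.e.~$\nu(c_{0,3})=0$.

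For the inductive step, suppose $c_{0,2\ell+1}=2^{\ell-1}x_\ell$ with $x_\ell$ odd, for some $\ell\in\{1,\dots,d-1\}$, and compare coefficients of the basis element $(0,2\ell)$. Proposition~\ref{P2} applied to $(i,\alpha,\beta)=(d-\ell,0,2\ell+3)$ gives $\nu(c_{0,2\ell+3})\ge\ell$, so we may write $c_{0,2\ell+3}=2^\ell y$ with $y\in\mathbb{Z}$. The same style of bookkeeping shows that modulo $2^{e+\ell-1}$ the only surviving $c$-contributions are $\binom{2^e}{2}c_{0,2\ell+1}$ and $\binom{2^e}{4}c_{0,2\ell+3}$; the admissibility $\alpha\le 2d-2\ell+3$ combined with the necessary condition $\alpha+1\ge 2^{d-\ell+2}$ for a $d$-term not to vanish rules out every $d$-summand precisely when $\ell\le d-1$. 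Dividing the resulting congruence by $2^{e+\ell-2}$ yields $x_\ell+y\equiv 0\pmod 2$; since $x_\ell$ is odd, so is $y$, whence $\nu(c_{0,2\ell+3})=\ell$, establishing Lemma~\ref{C1} at level $\ell+1$.

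The technical obstacle is the valuation bookkeeping that isolates $r\in\{1,3\}$ as the only indices for which $\binom{2^e}{r+1}c_{0,2\ell+r}$ can survive modulo $2^{e+\ell-1}$. This rests on a numerical coincidence: the minimum valuation $\nu\binom{2^e}{2^m}=e-m$ of a ``jump binomial,'' coupled with the Proposition~\ref{P2} lower bound $\nu(c_{0,2\ell+2^m-1})\ge\ell+2^{m-1}-2$, reaches the critical level $e+\ell-2$ only for $m\in\{1,2\}$; already $m=3$ yields total valuation $\ge e+\ell-1$. The parallel verification on the $d$-side rests on the elementary inequality $2^m>2m$ for $m\ge 3$ (equivalently $m=d-\ell+2\ge 3$), which ensures that the thresholds $\alpha+1\ge 2^{d-\ell+2}$ and $\alpha\le 2d-2\ell+3$ are incompatible precisely when $\ell\le d-1$, so the inductive step closes cleanly without interference from $d$-terms.
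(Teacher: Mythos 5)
Your proof is correct and takes essentially the same approach as the paper. The base case comes from the coefficient of $(0,0)$ in~(\ref{F4}) and the inductive step from the coefficient of $(0,2\ell)$ (the paper indexes the step at $\ell\geq 2$ using $(0,2\ell-2)$, which is the same argument after a shift of index), both isolating the jump-binomial contributions $\binom{2^e}{2}c_{0,2\ell+1}$ and $\binom{2^e}{4}c_{0,2\ell+3}$ as the only terms surviving modulo the critical $2$-power.
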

\begin{proof} We proceed by induction on $\ell$. For $\ell=1$, we start by
analyzing the coefficients of $v_1^{2d+3}(0,0)$ on both
sides of~(\ref{F4}). On the left hand side, the
coefficient is  $2^{e-2}$. On the right hand side, the
coefficients coming from the second summation 
arise from summands having index $(\alpha,0)$ with 
\begin{equation}\label{siempresi}
\alpha +1 \leq 2^k \mbox{ \ and \ } \alpha \leq 2d+3
\,\mbox{{}---the latter in view of (\ref{F7}). }
\end{equation}
The relevant coefficient, $\binom{2^k}{\alpha+1}$,
is divisible by $2^{e-1}$ in view 
of Lemma~\ref{L1} since the required hypotheses
$e-1 \leq k$ and $\alpha +1 < 2^{k-e+2}$ are satisfied---the
latter follows from the second inequality in~(\ref{siempresi}) and
the facts that $d\geq1$ and
\begin{equation}\label{F39}
2m < 2^m \quad \textrm{for all} \quad m \geq 3.
\end{equation}
On the other hand, the coefficients coming from the first
summation of (\ref{F4}) arise from summands having an index $(0,\beta)$
with
\begin{eqnarray}\label{F41}
\beta \leq 2d+3,\, \mbox{ \ \ and \ \ } \, \beta
+1 \leq 2^e.
\end{eqnarray}
For such summands, the relevant coefficient is
$c_{0,\beta}\binom{2^e}{\beta +1}$. We show in the next paragraph
\begin{equation}\label{F42}
\nu (c_{0,\beta})+ \nu\binom{2^e}{\beta+1} \geq e-1
\end{equation} for $\beta \neq 3$. Once this is done,
the remaining coefficient to analyze---the one with $\beta=3$---will be 
forced to be divisible by  $2^{e-2}$ but
not by $2^{e-1}$, so that $\nu(c_{0,3}) + \nu\binom{2^{e}}{4} =
e-2$, that is $\nu(c_{0,3}) =0$, completing the proof 
for $\ell=1$.

\medskip When $0 \leq \beta \leq 2$ we have
\begin{eqnarray*}
\nu\binom{2^{e}}{\beta + 1} =  \left\{\begin{array}{ll}
\nu\binom{2^{e}}{1}= e, & \textrm{if}\quad \beta = 0,\vspace{2mm}\\
\nu\binom{2^{e}}{2}= e-1, & \textrm{if}\quad \beta = 1,\vspace{2mm}\\
\nu\binom{2^{e}}{3}= e, & \textrm{if}\quad \beta = 2,
\end{array} \right.
\end{eqnarray*}
implying~(\ref{F42}). So, we can safely assume 
$\beta \geq 4$. If $\beta$ is even,~(\ref{F42})
is evident as $\nu \binom{2^{e}}{
\beta + 1} = e$. If $\beta = 2m+1$, then $m\geq 2$ and~(\ref{F41}) reads
\begin{eqnarray}
2m + 1 \leq 2d+3, \quad\textrm{i.e.} \quad m\leq d+1,\nonumber
\label{F43}
\end{eqnarray}\vspace{-0.65cm}
\begin{eqnarray}
2m+2 \leq 2^e, \quad\textrm{i.e.} \quad m < 2^{e-1}. \nonumber
\label{F44}
\end{eqnarray} We then consider two cases:

\medskip
\noindent {\bf Case $m=d+1$}. In view of Proposition~\ref{P1}, it suffices to verify the inequality $$2m+1-d-3+ \nu\binom{2^{e}}{
\beta + 1}\geq e-1,$$
that is
\begin{eqnarray}\label{F45}
\nu\binom{2^{e}}{\beta + 1}\geq e-d-1.\nonumber
\end{eqnarray}
But this is a consequence of Lemma~\ref{L1} since the
two required hypotheses $e-d- 1 \leq e$ and $\beta + 1 < 2^{d+2}$ 
hold---the latter follows from
(\ref{F39}) and the fact that $\beta +1 = 2d + 4$.

\medskip
\noindent {\bf Case $m\leq d$}. The three hypotheses in Proposition~\ref{P2}
for $i=d+1-m$ and $c_{0,\beta}$ are

\vspace{-5mm}
\begin{eqnarray*}
&d+4-d-1+m \leq \beta \leq 2d-2d-2+2m+4,&\\
&0\leq d+3-d-1+m,&\\
&1\leq d+1-m \leq d,&
\end{eqnarray*}
or, equivalently
\begin{eqnarray*}
&m+3 \leq \beta \leq 2m+2,&\\
&0\leq 2+m,&\\
&1\leq m\leq d,&
\end{eqnarray*}
all of which are obvious. Therefore it suffices to check that $$\beta
+d+1-m-d-3+\nu\binom{2^{e}}{\beta + 1}\geq e-1.$$ But since
$\beta=2m+1$, this simplifies to $\nu\binom{2^{e}}{\beta + 1}
\geq e-m$ which, in turn, 
follows from Lemma~\ref{L1} as the two required hypotheses
$e-m\leq e$ and $\beta+1 <2^{m+1}$ hold---the latter 
follows from (\ref{F39}) and $\beta=2m+1$. This completes the verification 
of~(\ref{F42}) and, thus, of the case $\ell=1$ of the lemma.

\medskip
For the inductive step of the proof ($\ell\geq2)$,
we analyze the coefficients for $v_1^{2d-2\ell+5} (0,2\ell-2)$ on both 
sides of~(\ref{F4}). The coefficient is zero on the left hand side. On
the right hand side, the coefficients coming from the second summation arise 
from summands having index $(\alpha,2\ell-2)$ with
\begin{eqnarray}
\alpha+1\leq 2^k \,\mbox{ \ \ and \ \ }\,
2\ell+\alpha -2 \leq 2d+3. \label{F56}
\end{eqnarray}
In view of Lemma~\ref{L1}, 
the coefficient for such a summand, $\binom{2^{k}}{\alpha + 1}$, is 
divisible by $2^{e+\ell-2}$
as $k \geq k-2=e+d-2\geq e+\ell-2$, $\,\alpha+1 \leq 2d+6-2\ell$ 
by~(\ref{F56}), and $2d+6-2\ell < 2^{d-\ell+3}
= 2^{k-e-\ell+3}$ by~(\ref{F39}). 
On the other hand, the coefficients coming from the first
summation in~(\ref{F4}) arise from summands having an index $(0,\beta)$
with
$$
2\ell - 2 \leq \beta, \quad \beta \leq 2d+3, \,\mbox{
\ \ and \ \ }\,\beta-2\ell+3\leq 2^e. \label{F59}
$$
We claim that the coefficient $c_{0,\beta}\binom{2^{e}}{
\beta-2\ell + 3}$ of such a summand is also divisible by $2^{e+\ell-2}$
provided $\beta\neq 2\ell+1, 2\ell-1$. Indeed:

\medskip
\noindent {\bf Case $\beta=2d+3$}. In view of Proposition~\ref{P1}, it
suffices to verify $$\beta-d-3 + \nu\binom{2^{e}}{
\beta-2\ell + 3} \geq e+\ell-2,$$ that is
$$
\nu\binom{2^{e}}{\beta -2\ell + 3} \geq e+\ell-d-2,
$$
which follows from Lemma~\ref{L1} as the two required hypotheses 
$\ell\leq d+2$  and $\beta-2\ell+3 = 2d-2\ell+6 < 2^{d+3-\ell}$ hold---the latter 
by virtue of (\ref{F39}) and since
$d+3-\ell \geq 3$.

\medskip\noindent {\bf Case $\beta=2\ell-2$}. We need to verify
$$\nu(c_{0,2\ell-2})+ \nu\binom{2^{e}}{1}\geq e+\ell-2,$$ that is
\begin{equation}\label{F62}
\nu(c_{0,2\ell-2})\geq\ell-2,
\end{equation}
and we can safely assume $\ell\geq 3$. The hypotheses of
Proposition~\ref{P2} for $c_{0,2\ell-2}$ with $i=d-\ell+3$ read as
\begin{eqnarray*}
&d+4-(d+3-\ell)\leq 2\ell-2 \leq 2d-2(d+3-\ell)+4,&\\
&0 \leq d+3-d+\ell-3,&\\
&1 \leq d-\ell+3 \leq d,&
\end{eqnarray*}
which are equivalent to
\begin{eqnarray*}
&1+\ell\leq 2\ell-2 \leq 2\ell-2,&\\
&0 \leq \ell,&\\
&3 \leq \ell\leq d+2,&
\end{eqnarray*} 
respectively, all of which are clearly satisfied.
Therefore~(\ref{F62}) follows from Proposition~\ref{P2}
in this case.

\medskip
\noindent {\bf Case $\beta=2\ell$}. We now need to verify
$\nu(c_{0,2\ell})+ \nu\binom{2^{e}}{3}\geq e+\ell-2,$ or alternatively
\begin{equation}\label{F69}
\nu(c_{0,2\ell})\geq \ell-2,
\end{equation}
which follows from Proposition~\ref{P2} for $c_{0,2\ell}$ with $i=d+1-\ell$.
This time the required hypotheses are
\begin{eqnarray*}
&d+4-(d+1-\ell)\leq 2\ell \leq 2d-2(d+1-\ell)+4,&\\
&0\leq d+3-(d+1-\ell),&\\
&1\leq d+1-\ell \leq d,&
\end{eqnarray*}
or equivalently
\begin{eqnarray*}
&3\leq \ell \quad \textrm{and} \quad 2\ell\leq 2\ell+2,&\\
&0 \leq \ell+2,&\\
&1 \leq \ell\leq d,&
\end{eqnarray*}
respectively, which clearly hold (except for
$\ell=2$, in which case (\ref{F69}) is obvious).

\medskip
\noindent {\bf Case where $\beta$ satisfies}
\begin{eqnarray}
2\ell +2\leq \beta \leq 2d+2 \quad \textrm{\bf and} \quad \beta -2\ell+3 \leq
2^e. \label{F76}
\end{eqnarray}
We have to verify the inequality
\begin{equation}\label{verificar}\nu(c_{0,\beta})+
\nu\binom{2^{e}}{\beta -2\ell + 3}\geq e+\ell-2.
\end{equation} 
Let $[r]$ denote the integral part of a real number $r$.
We claim that the three hypotheses
of Proposition~\ref{P2} for $c_{0,\beta}$ with 
$i=d+2-\left[\frac{\beta+1}{2}\right]$ hold, namely:
\begin{eqnarray}
&&d+4-d-2+\left[\frac{\beta+1}{2}\right]\leq \beta \leq
2d-2d-4+2\left[\frac{\beta+1}{2}\right]+4, \label{F78}\\
&&0 \leq d+3-d-2+\left[\frac{\beta+1}{2}\right], \label{F79}\\
&&1 \leq d+2-\left[\frac{\beta+1}{2}\right] \leq d. \label{F80}
\end{eqnarray}
The inequality~(\ref{F79}) is clear. The inequalities in
(\ref{F78}) are verified by a straightforward argument: If $\beta=2m$, then
(\ref{F78}) is equivalent to $2+m\leq 2m\leq 2m$, which is
true since, by (\ref {F76}), $m\geq 2$. If $\beta=2m+1$, then
(\ref {F78}) is equivalent to $2+m+1 \leq 2m+1\leq 2m+2$, which
again is valid because $m\geq 2$. Lastly, the second inequality in~(\ref{F80}) 
is evident, while the first one follows from~(\ref{F76}):
$$\left[\frac{\beta+1}{2}\right] \leq
\left[\frac{2d+3}{2}\right]=d+1.$$ Thus, the conclusion
in Proposition~\ref{P2} allows us to deduce~(\ref{verificar}) from
$$\beta+d+2- \left[\frac{\beta+1}{2}\right]-d-3+\nu\binom{2^{e}}{\beta
-2\ell + 3}\geq e+\ell-2,$$ that is
\begin{equation}\label{F81}
\nu\binom{2^{e}}{\beta -2\ell + 3}\geq e+\ell-\beta
+\left[\frac{\beta+1}{2}\right]-1.
\end{equation}
To prove (\ref{F81}), we check that the hypotheses in
Lemma~\ref{L1} are satisfied. The first one is $$e+\ell-\beta +
\left[\frac {\beta+1} {2} \right]-1\leq e,$$ or equivalently $$\ell +
\left[\frac {\beta+1} {2} \right] \leq \beta+1,$$ which is fulfilled
since, by (\ref {F76}), we have $\ell+1 
\leq \left[\frac{\beta}{2}\right]$, so that
$$\ell + \left[\frac {\beta+1} {2} \right] \leq \left[\frac{\beta}{2}
\right]-1 +\left[\frac {\beta+1} {2} \right] \leq \beta+1$$ 
---the last inequality can be verified in a simple direct way.
The second hypotheses in Lemma~\ref{L1} in
order to verify~(\ref{F81}) is that
\begin{equation}\label{F82}
\beta - 2\ell+3 < 2^{\beta+2-\ell- \left[\frac{\beta+1}{2}\right]}.
\end{equation}
If $\beta=2m+1$, (\ref{F82}) is equivalent to
$2m+1 - 2\ell+3 < 2^{2m+1-\ell-m-1+2}=2^{m-\ell+2}$,
which is true by virtue of (\ref{F39}), since $m-\ell+2\geq 3$ (by
(\ref{F76})). If $\beta=2m$, then (\ref{F82}) is equivalent to
$2m - 2\ell+3 < 2^{2m-\ell-m+2}=2^{m-\ell+2},$
which is true for the same reasons as in the case of an odd $\beta$.

\medskip
Therefore, except for the two cases not yet analyzed, namely
the terms in the first summation in~(\ref{F4}) with index 
$(0,2\ell-1)$ and $(0,2\ell+1)$, all the coefficients 
of contributions to the term $v_1^{2d-2\ell+5}(0,2\ell-2)$ have been
verified to be 
divisible by $2^{e+\ell-2}$. We consider now the case $\beta=2\ell-1$. The
greatest power of 2 that divides the corresponding contribution is
given by
$$
\nu(c_{0,2\ell-1}) + \nu\binom{2^{e}}{2} =  \nu(c_{0,2(\ell-1)+1})
+ e-1 = \ell-2+e-1=e+\ell-3,
$$
where the penultimate equality holds by induction.
Consequently, the coefficient coming from the case $\beta=2\ell+1$ must be
divisible by $2^{e+\ell-3}$, but not by $2^{e+\ell-2}$; that is
$$
e+\ell-3 = \nu(c_{0,2\ell+1}) + \nu\binom{2^{e}}{4}
=\nu(c_{0,2\ell+1})+e-2.\nonumber
$$
Thus $\nu(c_{0,2\ell+1}) = \ell-1$, completing the induction.
\end{proof}

\section{Proof of Theorem~\ref{CTC}}\label{lasecnueva}
The proof of Theorem~\ref{CTC} is based on Propositions~\ref{biequi} 
and~\ref{xnyn} below, the first
of which follows from Proposition~2.7 and Note 2.6
in~\cite{G}. The proof of Proposition~\ref{xnyn}  will be given 
at the end of this section,
after having deduced Theorem~\ref{CTC}.

\begin{proposition}\label{biequi}
There is a $\mathbb{Z}_{2^{e}}$-biequivariant map
$S^{2m+1}\times S^{2m+1}\to S^{2\ell+1}$ if and only if
there is a map $\beta:L^{2m+1}(2^{e})\times
L^{2m+1}(2^{e})\to L^{2\ell+1}(2^{e})$
such that the diagram

\vspace{-5mm}
$$
\xymatrix{ L^{\infty}(2^{e})\times L^{\infty}(2^{e}) \ar[r]^-\mu
& L^{\infty}(2^{e})  \\
L^{2m+1}(2^{e})\times L^{2m+1}(2^{e}) \ar[u]
\ar[r]^(0.66){\beta\;\,} & L^{2\ell+1}(2^{e})\ar[u] }
$$
is homotopy commutative, where
$\mu$ is the $H$-space product on
$L^{\infty}(2^{e})=K(\mathbb{Z}/2^{e},1)$, and both vertical maps are inclusions.
\end{proposition}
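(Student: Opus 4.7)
The plan is to translate biequivariance into an equivariance condition for a product group action, and then reinterpret that equivariance in terms of principal bundle theory, using the fact that $L^\infty(2^e)=K(\mathbb{Z}/2^e,1)=B\mathbb{Z}_{2^e}$ is the classifying space for principal $\mathbb{Z}_{2^e}$-bundles.

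First, I would observe that a $\mathbb{Z}_{2^e}$-biequivariant map $f\colon S^{2m+1}\times S^{2m+1}\to S^{2\ell+1}$ is precisely a $(\mathbb{Z}_{2^e}\times\mathbb{Z}_{2^e})$-equivariant map where the product group acts on $S^{2\ell+1}$ through the multiplication homomorphism $\mu(\zeta_1,\zeta_2)=\zeta_1\zeta_2$. Let $K=\ker\mu$ be the anti-diagonal copy of $\mathbb{Z}_{2^e}$; since $K$ acts trivially on the target, giving such an $f$ is the same as giving a $\mathbb{Z}_{2^e}$-equivariant map $\tilde f\colon (S^{2m+1}\times S^{2m+1})/K\to S^{2\ell+1}$, where the residual quotient $(\mathbb{Z}_{2^e}\times\mathbb{Z}_{2^e})/K\cong\mathbb{Z}_{2^e}$ acts on the $K$-quotient.

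Next I would identify $(S^{2m+1}\times S^{2m+1})/K$ as the total space of the principal $\mathbb{Z}_{2^e}$-bundle $\xi$ over $L^{2m+1}(2^e)\times L^{2m+1}(2^e)$ classified by $\mu\circ(\iota\times\iota)$ (this is the pullback of the universal bundle $E\mathbb{Z}_{2^e}\to B\mathbb{Z}_{2^e}$ along that composite, since pulling back along $\mu$ quotients the product $\mathbb{Z}_{2^e}\times\mathbb{Z}_{2^e}$-bundle by exactly $K$). Similarly $S^{2\ell+1}\to L^{2\ell+1}(2^e)$ is a principal $\mathbb{Z}_{2^e}$-bundle classified by $\iota$. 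Under these identifications, a $\mathbb{Z}_{2^e}$-equivariant $\tilde f$ covering a map $\beta$ of bases is the same as a principal bundle map $\xi\to (S^{2\ell+1}\to L^{2\ell+1}(2^e))$ over $\beta$. By the standard classification theorem, such a bundle map exists if and only if $\xi\cong\beta^{*}(S^{2\ell+1})$, which, since $B\mathbb{Z}_{2^e}$ classifies principal $\mathbb{Z}_{2^e}$-bundles, is equivalent to the homotopy $\mu\circ(\iota\times\iota)\simeq\iota\circ\beta$, i.e., to the homotopy commutativity of the displayed diagram.

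The forward direction is then straightforward: given $f$, pass to the $(\mathbb{Z}_{2^e}\times\mathbb{Z}_{2^e})$-orbit spaces to obtain $\beta$, and the required bundle isomorphism is witnessed by $f$ itself. The hard part will be the reverse direction, where one must upgrade a homotopy of classifying maps valued in the infinite-dimensional $L^\infty(2^e)$ to a concrete principal bundle isomorphism over the finite-dimensional base, and from there recover an actual biequivariant map on $S^{2m+1}\times S^{2m+1}$. This is handled by the covering homotopy property for principal bundles together with the observation that any $\mathbb{Z}_{2^e}$-equivariant map on $(S^{2m+1}\times S^{2m+1})/K$ lifts uniquely to a $(\mathbb{Z}_{2^e}\times\mathbb{Z}_{2^e})$-equivariant map on its $K$-cover $S^{2m+1}\times S^{2m+1}$, which by construction is $\mathbb{Z}_{2^e}$-biequivariant. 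Carrying out this lifting step carefully (as in Proposition~2.7 and Note~2.6 of~\cite{G}) completes the equivalence.
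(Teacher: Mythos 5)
Your argument is correct, and it supplies the bundle-theoretic proof that the paper itself delegates to Proposition~2.7 and Note~2.6 of~\cite{G} without reproducing. The reduction of biequivariance to $(\mathbb{Z}_{2^e}\times\mathbb{Z}_{2^e})$-equivariance via $\mu$, the identification of $(S^{2m+1}\times S^{2m+1})/K$ as the $\mathbb{Z}_{2^e}$-bundle classified by $\mu\circ(\iota\times\iota)$ (associated bundle construction through the surjection $\mu$), and the translation of the existence of a bundle map into a homotopy of classifying maps over the finite CW base are all sound. One small remark: in the reverse direction you do not need any covering-space lifting theorem. Given the $\mathbb{Z}_{2^e}$-equivariant bundle map $\tilde f\colon(S^{2m+1}\times S^{2m+1})/K\to S^{2\ell+1}$, the biequivariant map is simply $f=\tilde f\circ q$ where $q$ is the quotient projection; one checks directly that $q(\zeta_1x,\zeta_2y)=(\zeta_1\zeta_2)\cdot q(x,y)$ because $(\zeta_1,\zeta_2)\equiv(\zeta_1\zeta_2,1)\bmod K$, hence $f(\zeta_1x,\zeta_2y)=\zeta_1\zeta_2f(x,y)$. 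Calling this a ``lift'' is harmless but slightly misleading; it is precomposition with a quotient map, not an application of the homotopy lifting property.
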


\begin{proposition}\label{xnyn}
If $\alpha(n)\ge3$, the element $2^{\alpha(n)-3}v_1^{3}x^{n+1}y^{n+1}$ is non-zero
in the ring $R_n=ku_*[x,y]/([2^{\alpha(n)-1}]
(x),[2^{\alpha(n)-1}](y),x^{n+2},y^{n+2})$.
\end{proposition}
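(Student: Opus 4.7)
The plan is to reinterpret the non-vanishing statement in $R_n$ via Poincar\'e duality, and then reduce it to the $k=e$ case of Theorem~\ref{contribucion}, where $e:=\alpha(n)-1\geq 2$. Set $L=L^{2n+3}(2^e)$.

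First, identify $R_n$ with the reduced $ku$-cohomology ring $ku^*(L\times L)$ via the standard K\"unneth and Atiyah--Hirzebruch computation $ku^*(L^{2m+1}(2^e))=ku_*[x]/([2^e](x),x^{m+1})$. Under this identification, $x^{n+1}y^{n+1}$ lies in cohomological degree $4n+4$, exactly two below the top dimension of the orientable manifold $L\times L$.

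Next, invoke Poincar\'e duality: since lens spaces are $\mathrm{Spin}^{c}$, both $L$ and $L\times L$ are $ku$-orientable, and capping with the $ku$-fundamental class $[L\times L]\in ku_{4n+6}(L\times L)$ gives an isomorphism of $ku_{*}$-modules
\[
\Phi\colon ku^{4n-2}(L\times L)\xrightarrow{\ \sim\ }ku_{8}(L\times L)
\]
sending $v_1^3 x^{n+1}y^{n+1}$ to $v_1^3\widetilde\tau$, where $\widetilde\tau:=\Phi(x^{n+1}y^{n+1})\in ku_{2}(L\times L)$. A direct Atiyah--Hirzebruch computation (using $H^{4n+4}(L\times L;\zet)\cong\zet/2^e$) shows that $ku_{2}(L\times L)\cong\zet/2^e$, generated by the image of the bottom torus $S^1\times S^1\hookrightarrow L\times L$, and that $\widetilde\tau$ generates this cyclic group.

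Finally, apply the inclusion $i\colon L\times L\hookrightarrow B\zet_{2^e}\times B\zet_{2^e}$. By construction, $i_{*}\widetilde\tau=\tau_{e,e}$, since both classes arise from the same stable torus map $S^2\to B\zet_{2^e}\times B\zet_{2^e}$. Assuming for contradiction that $2^{e-2}v_1^3 x^{n+1}y^{n+1}=0$ in $R_n$, one gets $2^{e-2}v_1^3\widetilde\tau=0$ in $ku_{8}(L\times L)$, whence $2^{e-2}v_1^3\tau_{e,e}=i_{*}(2^{e-2}v_1^3\widetilde\tau)=0$ in $ku_{8}B(\zet_{2^e}\times\zet_{2^e})$, contradicting the $k=e$ instance of Theorem~\ref{contribucion}. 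The main obstacle in making this outline fully rigorous is to verify carefully that $\Phi(x^{n+1}y^{n+1})$ really is the torus-generator of $ku_{2}(L\times L)$ rather than another class in the same group; this requires tracking the $ku$-orientation and possible Atiyah--Hirzebruch extensions. An alternative, more self-contained, route would bypass duality and instead imitate the coefficient-tracking arguments of Section~\ref{seco} in the truncated ring $ku_{*}[x,y]/(x^{n+2},y^{n+2})$, expanding $2^{e-2}v_1^3 x^{n+1}y^{n+1}=p\cdot[2^e](x)+q\cdot[2^e](y)$ and contradicting the $2$-divisibility constraints forced by Lemma~\ref{L1} on the monomial coefficients.
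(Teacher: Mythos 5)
Your proposal reaches for the right endpoint—the $k=e$ instance of Theorem~\ref{contribucion} applied to $2^{e-2}v_1^3\tau_{e,e}$—but the route through geometric Poincar\'e duality has a gap that precedes the one you flag. The identification $R_n\cong ku^*(L\times L)$ for $L=L^{2n+3}(2^e)$ is not correct: $R_n$ is, by the reference [GN] the paper cites, the \emph{even-degree} $ku$-cohomology of $L^{2n+2}(2^e)\times L^{2n+2}(2^e)$, the product of the $(2n+2)$-\emph{skeleta}, not of the closed manifolds. The full lens space $L^{2n+3}(2^e)$ has a free $\zeta$-class in top cohomological degree $2n+3$, and the K\"unneth spectral sequence for the product then produces additional even-degree contributions (odd$\,\otimes\,$odd pieces and ${\rm Tor}$ terms) that the ring $R_n$ does not see. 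Your need for a closed manifold (to have a $ku$-fundamental class and a duality isomorphism $\Phi$) is therefore in tension with the clean presentation of $R_n$: one lives on $L^{2n+3}\times L^{2n+3}$, the other on $L^{2n+2}\times L^{2n+2}$, and there is no canonical map $R_n\to ku^*(L^{2n+3}\times L^{2n+3})$; the natural restriction goes $ku^{\mathrm{even}}(L^{2n+3}\times L^{2n+3})\to R_n$, so you would additionally have to establish injectivity of that map in the relevant degree—something your outline does not address and which is not obvious.

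The paper's own argument can be read as a purely algebraic, ``presentation-level'' Poincar\'e duality that avoids all of these issues: $R_n$ decomposes as a $ku_*$-module into a free piece plus a module $M_n$ with an explicit presentation by the $[2^e]$-series relations, and the reindexing $x^r y^s\mapsto(n+1-r,\,n+1-s)$ gives an isomorphism $M_n\cong M'_n$ sending $x^{n+1}y^{n+1}$ to $(0,0)$. After enlarging the presentation slightly and mapping into $ku_*(L^\infty(2^e))\otimes_{ku_*}ku_*(L^\infty(2^e))$, the image of $x^{n+1}y^{n+1}$ is exactly the toral class, and Theorem~\ref{contribucion} finishes. No orientations, no Atiyah--Hirzebruch extensions, no ambiguity about which generator of $ku_2(L\times L)$ one lands on (an issue you yourself flag as unresolved). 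If you want to pursue your alternative in full, the substantive tasks are: (a) relate $ku^{\mathrm{even}}(L^{2n+3}\times L^{2n+3})$ to $R_n$ precisely, including the extra K\"unneth/$\mathrm{Tor}$ classes; (b) verify that the $ku$-theoretic cap product $\Phi$ takes $x^{n+1}y^{n+1}$ to the bottom toral generator; and (c) check that the pushforward $i_*$ carries that generator to $\tau_{e,e}$ without sign or filtration ambiguity. Your ``alternative, more self-contained route'' at the end—tracking binomial divisibility of coefficients in an explicit expansion—is much closer in spirit to what actually happens: it is essentially Section~\ref{seco}'s coefficient chase, which the paper's Proposition~\ref{xnyn} recycles rather than reproves.
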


\begin{proof}[Proof of Theorem~\ref{CTC}]
We assume $b(n+1,\alpha(n)-1)<2n-2$ and derive a 
contradiction. Since the case $\alpha(n)=2$ follows from~(\ref{cota}),
we will assume throughout the section that $\alpha(n)\geq3$. By
Proposition~\ref{biequi}, the restriction to 
$L^{2n+3}(2^{\alpha(n)-1})\times L^{2n+3}(2^{\alpha(n)-1})$ of the
$H$-product in $L^\infty(2^{\alpha(n)-1})$ is homotopic to a map 
$\beta\colon L^{2n+3}(2^{\alpha(n)-1})\times L^{2n+3}(2^{\alpha(n)-1})\to
L^{4n-5}(2^{\alpha(n)-1})$. Apply $ku$-cohomology to 
the homotopy commutative diagram

\vspace{-4mm}
$$\xymatrix{
& & \mbox{\footnotesize $L^{2n+2}(2^{\alpha(n)-1}){\times} L^{2n+2}
(2^{\alpha(n)-1})$} \ar[dl]\ar[d]\\
& \mbox{\footnotesize $L^{\infty}(2^{\alpha(n)-1}){\times} L^{\infty}
(2^{\alpha(n)-1})$} \ar[dl]_{\pi\times\pi} \ar[d]^{\mu} &
\mbox{\footnotesize $\,L^{2n+3}(2^{\alpha(n)-1}){\times}
L^{2n+3}(2^{\alpha(n)-1})$}
 \ar[d]^{\beta} \ar[l]\\
\mbox{\footnotesize $\mathbb{C}P^{\infty}{\times} \mathbb{C}
P^{\infty}$}\ar[d]^{\mu'} & \mbox{\footnotesize $L^{\infty}
(2^{\alpha(n)-1})$}\ar[dl]^{\pi} & \mbox{\footnotesize $\,L^{4n-5}(
2^{\alpha(n)-1})$} \ar[l] \ar[dl]^{\pi}\\
\mbox{\footnotesize $\mathbb{C}P^{\infty}$} & \mbox{\footnotesize
$\,\,\mathbb{C}P^{2n-3}$}\ar[l] }$$

\noindent where unlabeled maps are inclusions, $\mu'$ is the
$H$-product of $\mathbb{C}P^{\infty}$, and $\pi\colon L^{2k+1}(
2^{\alpha(n)-1})\to\mathbb{C}P^k$ ($0\leq
k\leq\infty$) stands for the canonical projection (a morphism of
$H$-spaces for $k=\infty$). 

It is well known that the maps $\pi^*$ and $(\pi\times\pi)^*$ 
induced in $ku$-theory take respectively the forms
$ku_*[[x]]\to ku_*[[x]]/([2^{\alpha(n)-1}](x))$ and 
$ku_*[[x,y]]\to ku_*[[x,y]]/([2^{\alpha(n)-1}](x),
[2^{\alpha(n)-1}](y))$, where $x$ and $y$ stand for 
complex orientation elements of cohomological dimension 2 and, as usual
in this context, $ku_*$ is graded over the non-positive integers. 
Further, in even dimensions the ring $ku^{*}(L^{2n+2}(2^{\alpha(n)-1}) \times
L^{2n+2}(2^{\alpha(n)-1}))$ agrees with $R_n$, and 
the ``restriction'' map $ku^{\mathrm{even}}(L^{\infty}(2^{\alpha(n)-1}) \times
L^{\infty}(2^{\alpha(n)-1})) \to R_n$
is surjective with kernel generated by
$x^{n+2}$ and $y^{n+2}$ (cf.~Proposition~3.1 
in~\cite{GN}). Chase the element $x\in
ku^{*}(\mathbb{C}P^{\infty})$ across the diagram above, from
the lower left corner to the upper right corner, recalling that 
$(\mu')^{*}(x) =x+y+v_1xy$ is the multiplicative formal group.
Since $x\in ku^{*}(\mathbb{C}P^{2n-3})$ satisfies $x^{2n-2}=0$, 
we get the relation $(x+y+v_1xy)^{2n-2}=0$ in $R_n$. The proof will be
complete once we show that, up to units 
in this ring, $x(x+y+v_1xy)^{2n-2}=2^{\alpha(n)-3}
v_1^3x^{n+1}y^{n+1}$, the element in Proposition~\ref{xnyn}.

\smallskip
In view of the relations $x^{n+2}=y^{n+2}=0$, we have
\begin{eqnarray*}
\lefteqn{x(x+y+v_1xy)^{2n-2} = x(x+y)^{2n-2}+ 
\mbox{$\binom{2n-2}{1}$}(x+y)^{2n-3}v_1x^2y}\hspace{1cm}\;\\
& & +\mbox{$\binom{2n-2}{2}$}(x+y)^{2n-4}v_1^2x^3y^2
+\mbox{$\binom{2n-2}{3}$}(x+y)^{2n-5}v_1^3x^4y^3.
\end{eqnarray*}
These four summands reduce, respectively, to
\begin{eqnarray}
&&\;\;\;\mbox{$\binom{2n-2}{n}$}x^{n+1}y^{n-2} \hspace{.3mm}{+}\hspace{.3mm} 
\mbox{$\binom{2n-2}{n-1}$}
x^{n}y^{n-1} \hspace{.3mm}{+}\hspace{.3mm}\mbox{$\binom{2n-2}{n-2}$}x^{n-1}y^{n}
\hspace{.3mm}{+}\hspace{.3mm}
\mbox{$\binom{2n-2}{n-3}$}x^{n-2}y^{n+1};\label{cbin1}
\\&&\rule{0mm}{5mm}\;\;\;\mbox{$\binom{2n-2}{1}$}
\left[\mbox{$\binom{2n-3}{n-1}$}
x^{n-1}y^{n-2} + \mbox{$\binom{2n-3}{n-2}$}x^{n-2}y^{n-1} +
\mbox{$\binom{2n-3}{n-3}$}x^{n-3}y^{n}\right]v_1x^2y;\label{cbin2}
\\&&\rule{0mm}{5mm}\;\;\;\mbox{$\binom{2n-2}{2}$}\left[
\mbox{$\binom{2n-4}{n-2}$}x^{n-2}y^{n-2} +
\mbox{$\binom{2n-4}{n-3}$}x^{n-3}y^{n-1}\right]v_1^2x^3y^2;\label{cbin3}
\\&&\rule{0mm}{5mm}\;\;\;\mbox{$\binom{2n-2}{3}$}\left[
\mbox{$\binom{2n-5}{n-3}$}x^{n-3}y^{n-2}\right]v_1^3x^4y^3.\label{cbin4}
\end{eqnarray}
Next we analyze the divisibility by 2 of
these binomial coefficients assuming
\begin{eqnarray}\label{dimalfa}
\alpha(n+1)\geq \alpha(n)
\end{eqnarray}
(Theorem~\ref{CTC} follows from~(\ref{topcases}) for $\alpha(n)>\alpha(n+1)$).
In the following arguments we make free use of
the relations $\nu\binom{a}{b}=\alpha(b)+\alpha(a-b)-\alpha(a)$,
$\,\alpha(k-1)=\alpha(k)-1+\nu(k),\,$ and $\,2^iz^j=0$ for 
$i+j\geq n+\alpha(n)$ and
$z\in\{x,y\}$ (the latter was pointed out in~(\ref{ord})
to come from Corollary~2.6 in~\cite{cfimm}).

\begin{lema}\label{pbin1}
The first and last terms in~\emph{(\ref{cbin1})} vanish.
\end{lema}
\begin{proof}\ 

\vspace{-1.1cm}\begin{eqnarray*}
\nu\mbox{$\binom{2n-2}{n}$} & = &\alpha(n)+\alpha(n-2)- \alpha(n-1)\\
& = & \alpha(n)+ \alpha(n-1)-1+ \nu(n-1)- \alpha(n-1)\\
& = & \alpha(n)-1+ \nu(n-1)\geq \alpha(n)-1,
\end{eqnarray*} 
so that in fact $\binom{2n-2}{n}x^{n+1}=0$, which takes care of the 
first summand in~(\ref{cbin1}). 
Similarly, the last summand in~(\ref{cbin1}) is taken care of by
observing that $\nu\binom{2n-2}{n-3}\geq\alpha(n+1)-2+\nu(n-1)+\nu(n-2)
\geq\alpha(n)-1$, where the last inequality uses~(\ref{dimalfa}). 
\end{proof}

\noindent The rest of the coefficients in~(\ref{cbin1}) are
analyzed in the following result.

\begin{lema}\label{pbin2} Up to units in $R_n$ we have
\begin{itemize}
\item[]$\rule{0mm}{4mm}\,\hspace{-8.7mm}\bullet\;\binom{2n-2}{n-1}
x^{n}y^{n-1}=2^{\alpha(n)-3}v_1^3x^{n+1}y^{n+1}$
and $\binom{2n-2}{n-2}x^{n-1}y^{n}=0$, for $n$ odd;
\item[]$\rule{0mm}{4mm}\,\hspace{-.1mm}\hspace{-8mm}
\bullet\hspace{.6mm}\;\binom{2n-2}{n-1}
x^{n}y^{n-1}=0$ and $\binom{2n-2}{n-2}
x^{n-1}y^{n}=2^{\alpha(n)-3}v_1^3x^{n+1}y^{n+1}$, for $n$ even.
\end{itemize}
\end{lema}
\begin{proof}We only consider the case with $n$ odd; the situation for $n$ even
is similar and, thus, left to the reader. With $n=2k+1$ we have
$$
\nu\mbox{$\binom{2n-2}{n-1}$} = \alpha(n-1)+ \alpha(n-1)- \alpha(n-1)
=  \alpha(n)-1+ \nu(2k+1)= \alpha(n)-1.
$$
Therefore, up to a 2-local unit, the second summand in~(\ref{cbin1}) is
$2^{\alpha(n)-1}x^{n}y^{n-1}$. Now, by using the $2^{\alpha(n)-1}$-series twice,
we get
\begin{eqnarray*}
\lefteqn{2^{\alpha(n)-1}x^{n}y^{n-1}=2^{\alpha(n)-1}xx^{n-1}y^{n-1}}\\
&= &x^{n-1}y^{n-1}\left(-\mbox{$\binom{2^{\alpha(n)-1}}{2}$}v_1x^2
-\mbox{$\binom{2^{\alpha(n)-1}}{3}$}v_1^2x^3-\mbox{$\binom{2^{\alpha(n)-1}}{4}$}
v_1^3x^4-\cdots\right)\\&= &-\mbox{$\binom{2^{\alpha(n)-1}}{2}$}
v_1x^{n+1}y^{n-1}=-\mbox{$\binom{2^{\alpha(n)-1}}{2}$}v_1y^2x^{n+1}y^{n-3}\\&= &
x^{n+1}y^{n-3}\left(2^{\alpha(n)-1}y+\mbox{$\binom{2^{\alpha(n)-1}}{3}$}v_1^2y^3+
\mbox{$\binom{2^{\alpha(n)-1}}{4}$}v_1^3y^4 +\cdots\right)\nonumber\\
&= & 2^{\alpha(n)-1}x^{n+1}y^{n-2}+\mbox{$\binom{2^{\alpha(n)-1}}{3}$}
v_1^2x^{n+1}y^{n}+\mbox{$\binom{2^{\alpha(n)-1}}{4}$}v_1^3x^{n+1}y^{n+1}.
\end{eqnarray*} 
The first two terms in the last expression vanish as 
$2^{\alpha(n)-1}x^{n+1}=0$, 
while the fact that $\nu\binom{2^{\alpha(n)-1}}{4}=
\alpha(n)-3$ means that, up to a 2-local unit, the third term takes the form 
$2^{\alpha(n)-3}\hspace{.2mm}v_1^3x^{n+1}y^{n+1}$. On the other hand
\begin{eqnarray*}
\nu\mbox{$\binom{2n-2}{n-2}$} & = &\alpha(n-2)+ \alpha(n)- \alpha(n-1)\\
& = & \alpha(n-1)-1+ \nu(n-1)+ \alpha(n)- \alpha(n-1)\\
& = & \alpha(n)-1+ \nu(2k) \geq \alpha(n),
\end{eqnarray*} 
and the third summand in~(\ref{cbin1}) vanishes as $2^{\alpha(n)}y^n=0$.
\end{proof}

The proof of Theorem~\ref{CTC} is complete by noticing that every summand in~(\ref{cbin2})--(\ref{cbin4}) vanishes (such a verification is similar to the previous arithmetical manipulations and, therefore, left to the reader).
\end{proof}

\begin{proof}[Proof of Proposition~\ref{xnyn}]
As a $ku_*$-module, the ring $R_n$ decomposes as the direct sum
of a free $ku_{*}$-module generated by the unit $1$, and a $ku_{*}$-module 
$M_n$ generated by the elements $x^{r}y^{s}$ with $0\leq r,s\leq n+1, r+s>0$
subject only to relations of the form
\begin{itemize}
\item $a_0x^{r+1}y^{s}+a_1x^{r+2}y^{s}+a_2x^{r+3}y^{s}+\cdots$
\item $a_0x^{r}y^{s+1}+a_1x^{r}y^{s+2}+a_2x^{r}y^{s+3}+\cdots$
\end{itemize}
(for $r$ and $s$ as above) where $a_i=\binom{2^{\alpha(n)-1}}{i+1}v_1^i$. In turn, $M_n$ is isomorphic to
the $ku_*$-module $M'_n$ generated by elements
$(i,j)$ with $0\leq i,j\leq n+1$ and $i+j<2n+2$
subject only to the relations
\begin{itemize}
\item $a_0(i-1,j)+a_1(i-2,j)+\cdots$
\item $a_0(i,j-1)+a_1(i,j-2)+\cdots$
\end{itemize}
(for $r$ and $s$ as above) An isomorphism $M_n\cong M'_n$ 
identifies $x^{r}y^{s}$ with 
$(n+1-r,n+1-s)$---note that $ku_*$ recovers its non-negative grading in the 
$ku_*$-module structure of $M'_n$. Now, we have an obvious map
$M'_n\to M''_n$ where $M''_n$ is generated by elements 
$(i,j)$ with $0\leq i,j\leq n+1$ subject only to relations of the form
\begin{itemize}
\item $a_0(i,j)+a_1(i-1,j)+a_2(i-2,j)+\cdots$
\item $a_0(i,j)+a_1(i,j-1)+a_2(i,j-2)+\cdots$
\end{itemize}
(for $r$ and $s$ as above). Note that, in $M''_n$,
the generator $(n+1,n+1)$ has been included, as well as
a slightly larger set of relations has been imposed, namely, besides the
relations in $M'_n$, $M''_n$ also has the relations starting
as $a_0(n+1,n+1)+a_1(n+1,n)+\cdots$ and
$a_0(n+1,n+1)+a_1(n,n+1)+\cdots$. Finally, $M''_n$ maps canonically to 
$ku_{*}(L^{\infty}(2^{\alpha(n)-1})) \otimes ku_{*}(L^{\infty}(2^{\alpha(n)-1}))$---where
$i$ and $j$ vary over all non-negative integers. Since the 
composition $M_n\cong M'_n\to M''_n\to 
ku_{*}(L^{\infty}(2^{\alpha(n)-1})) \otimes ku_{*}(L^{\infty}(2^{\alpha(n)-1}))$
sends $x^{n+1}y^{n+1}$ to $(0,0)$, Proposition~\ref{xnyn}
follows from Theorem~\ref{contribucion} since,
as an element of 
$ku_{*}(L^{\infty}(2^{\alpha(n)-1})) \otimes ku_{*}(L^{\infty}(2^{\alpha(n)-1}))$,
$2^{\alpha(n)-3}v_1^{3}(0,0)\neq0$.
\end{proof}

\appendix
\section{Comparison with previous TC-results}\label{comprev}
The following general lower bound for the topological complexity of lens 
spaces was proved by Farber and Grant in~\cite {FG}:

\begin{theorem}[{\cite[Theorem~11]{FG}}]\label{fargra}
Let $k$ and $\ell$ be integers with $0\leq k,\ell\leq m$. If $t$ does not divide
$\binom{k+\ell}{k}$, then $\TC(L^{2m+1}(t))\geq2(k+\ell)+1$.
\end{theorem}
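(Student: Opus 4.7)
The plan is to apply the weighted zero-divisor cup-length bound for $\TC$, where the weights come from Rudyak--Strom category weight. Setting $L = L^{2m+1}(t)$, I would work in mod-$t$ ordinary cohomology: let $u \in H^{1}(L;\mathbb{Z}_t)$ be a generator and $v = \beta u \in H^{2}(L;\mathbb{Z}_t)$ its Bockstein, subject to $v^{m+1}=0$, so that for each $0 \leq k \leq m$ the class $uv^{k}$ spans $H^{2k+1}(L;\mathbb{Z}_t)$. In $H^{*}(L \times L;\mathbb{Z}_t)$, consider the two natural zero-divisors sitting in $\ker\Delta^{*}$:
$$
U \,:=\, u \otimes 1 - 1 \otimes u, \qquad V \,:=\, v \otimes 1 - 1 \otimes v.
$$

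Expanding $V^{k+\ell}$ by the binomial theorem, the component inside the K\"unneth summand $H^{2k}(L) \otimes H^{2\ell}(L)$ is $(-1)^{\ell}\binom{k+\ell}{k}(v^{k} \otimes v^{\ell})$, which is nonzero in $\mathbb{Z}_t$ by the hypothesis $t \nmid \binom{k+\ell}{k}$ and because $k,\ell \leq m$. Multiplying by $U$ produces a nonzero summand $\pm\binom{k+\ell}{k}(uv^{k} \otimes v^{\ell})$ in $H^{2k+1}(L) \otimes H^{2\ell}(L) \cong \mathbb{Z}_t$, so that $U \cdot V^{k+\ell} \neq 0$ in $H^{2(k+\ell)+1}(L \times L;\mathbb{Z}_t)$. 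The naive cup-length bound at this point yields only $\TC(L) \geq k+\ell+1$, one zero-divisor short of what we want.

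The upgrade to $2(k+\ell)+1$ is obtained by appealing to the Rudyak--Strom theory of category weight. The class $V$ is the pullback of the corresponding zero-divisor in $H^{2}(B\mathbb{Z}_t \times B\mathbb{Z}_t;\mathbb{Z}_t)$ along the classifying map $L \times L \to B\mathbb{Z}_t \times B\mathbb{Z}_t$, and since $B\mathbb{Z}_t$ is aspherical with purely torsion $H^{2}$ arising as a Bockstein from $H^{1}$, that class has category weight $\geq 2$; weight is preserved by pullback. The class $U$ carries the trivial weight $1$. Since weighted cup-length is a lower bound for $\TC$, one concludes
$$
\TC(L) \,\geq\, \mathrm{wgt}(U) + (k+\ell)\,\mathrm{wgt}(V) \,\geq\, 1 + 2(k+\ell).
$$

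The non-vanishing calculation and the weighted cup-length inequality itself are formal; the substantive step, and the main obstacle, is the weight-$2$ estimate for $V$. Proving it requires showing that $V$ restricts to zero on every CW-complex of LS category less than $2$, which in Farber--Grant is carried out via a skeletal/Postnikov filtration of $B\mathbb{Z}_t$ (or equivalently through a Serre spectral sequence argument for the universal bundle $E\mathbb{Z}_t \to B\mathbb{Z}_t$). This weight analysis is the technical heart of the argument.
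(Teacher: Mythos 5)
This theorem is cited from Farber--Grant \cite{FG} and the paper does not reproduce a proof, so there is no internal argument to compare against; the relevant comparison is with \cite{FG} itself. Your sketch correctly mirrors that source's strategy: set up the two zero-divisors $U = u\otimes 1 - 1\otimes u$ and $V = v\otimes 1 - 1\otimes v$ with $v=\beta u$, observe that the Künneth component of $U\,V^{k+\ell}$ in bidegree $(2k+1,2\ell)$ is $\pm\binom{k+\ell}{k}\,uv^k\otimes v^\ell$, nonzero precisely when $t\nmid\binom{k+\ell}{k}$, and then promote the naive cup-length bound $k+\ell+1$ to $2(k+\ell)+1$ by assigning $V$ weight $\ge 2$. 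Your computation is correct and you rightly flag the weight estimate as the technical heart.

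Two points in that last step are imprecise, though. First, the weight entering the $\TC$ bound is the sectional-category weight with respect to the path fibration $\pi_X\colon PX\to X\times X$, not Rudyak--Strom LS-category weight: the statement to verify is that $V$ restricts to zero over any open $A\subseteq X\times X$ on which $\pi_X$ admits a continuous section, not over complexes of low Lusternik--Schnirelmann category. Second, Farber--Grant do not obtain the weight-$2$ estimate by a skeletal or Postnikov filtration of $B\mathbb{Z}_t$; they prove a general result saying that for any stable cohomology operation $\theta$ of positive degree applied to a zero-divisor $\bar u$, one has $\mathrm{wgt}_\pi(\theta(\bar u))\ge 2$. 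The class $V=\beta U$ is then the instance of this with $\theta$ the mod-$t$ Bockstein. With those corrections your outline faithfully reflects the cited proof.
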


The main task in this section is to complete the details 
of~(\ref{previous2FG}) and, therefore, of the fact that, 
for lens spaces of the form $L^{2n+3}(2^{\alpha(n)-1})$, 
Corollary~\ref{cotainferior} improves on Theorem~\ref{fargra} 
by arbitrarily large amounts. 

\begin{proposition}\label{Compara}
For a positive integer $n$ with $\alpha(n)\geq2$ and 
$\alpha(n+1)\geq\alpha(n)$, $\max\left\{k+\ell \,:\, 0\leq k,\ell\leq n+1
\,\,\,\mathrm {and}\,\,\, 2^{\alpha(n)-1} \nmid \binom{k+\ell}{k}
\right\}=2n-2^{\nu(n)+1}-1$.
\end{proposition}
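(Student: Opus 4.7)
The plan is to translate the divisibility condition into a statement about binary-addition carries. By Lemma~\ref{L1}, $\nu\binom{k+\ell}{k}=\alpha(k)+\alpha(\ell)-\alpha(k+\ell)$, which (Kummer's theorem) equals the number of carries produced when $k+\ell$ is computed in base~$2$. Thus $2^{\alpha(n)-1}\nmid\binom{k+\ell}{k}$ is equivalent to this carry count being at most $\alpha(n)-2$, and the proposition becomes: as $(k,\ell)$ ranges over $[0,n+1]^2$ with carry count at most $\alpha(n)-2$, the maximum of $k+\ell$ equals $s^*:=2n-2^{\nu(n)+1}-1$. Throughout, set $\nu=\nu(n)$ and $n=2^\nu m$ with $m$ odd, and note that by the identity $\alpha(n+1)=\alpha(n)+1-\nu(n+1)$ already invoked in the proof of Theorem~\ref{CTC}, the hypothesis $\alpha(n+1)\geq\alpha(n)$ is equivalent to $\nu(n+1)\leq 1$.

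For the attainability of $s^*$, I would exhibit the pair $(k,\ell)=(n+1,\,n-2^{\nu+1}-2)$: the bound $\ell\geq 0$ follows from the hypotheses (which force $n\geq 2^{\nu+1}+2$), while $k,\ell\leq n+1$ is immediate. In the sub-case $\nu\geq 1$ the binary expansion of $n+1$ consists of a $1$ at position $0$ together with the bits of $m$ starting at position $\nu$; in the sub-case $\nu=0$ the hypothesis forces $n\equiv 1\pmod{4}$, making the low-order bits of $n+1$ explicit. Combined with the direct binary expansion of $n-2^{\nu+1}-2$, a short computation yields $\alpha(n+1)+\alpha(n-2^{\nu+1}-2)-\alpha(s^*)=\alpha(n)-2$, establishing the lower bound.

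For the upper bound---ruling out $s>s^*$---I would parameterize admissible decompositions as $k=n+1-a$ and $\ell=n+1-b$ with $a,b\geq 0$, so that $s>s^*$ becomes $r:=a+b\leq 2^{\nu+1}+2$. The goal is: every such $(a,b)$ forces at least $\alpha(n)-1$ carries, equivalently $\alpha(n+1-a)+\alpha(n+1-b)-\alpha(2n+2-r)\geq\alpha(n)-1$. I would work downward from $r=0$ (where the count equals $\alpha(n+1)\geq\alpha(n)$) through each $r\in[1,2^{\nu+1}+2]$: since $a$ and $b$ are small, the high-order bits of $n+1-a$ and $n+1-b$ match those of $n+1$ and contribute uniformly, collapsing the check to the low-order window of width $\nu+2$ bits whose structure is controlled by $\nu(n+1)\leq 1$. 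The principal obstacle is this case analysis, whose size grows with $\nu$; a clean organizational device is to express $\alpha(n+1-a)$ explicitly as a function of $\alpha(a)$ and the fixed low-order bits of $n+1$, reducing the whole verification to two uniform parity sub-cases (namely $n$ even versus $n\equiv 1\pmod{4}$) and a finite elementary check in each.
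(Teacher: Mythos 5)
Your translation to carry counting via Kummer's theorem is equivalent to the paper's use of the identity $\nu\binom{a}{b}=\alpha(b)+\alpha(a-b)-\alpha(a)$ (Lemma~\ref{L1}), and your attainability witness $(k,\ell)=(n+1,\,n-2^{\nu+1}-2)$, though different from the paper's choice $(n,\,n-2^{\nu+1}-1)$, is valid: the two binomials have the same $2$-adic valuation, since $\alpha(n+1)-\alpha(n)=1-\nu(n+1)$ and $\alpha(n-2^{\nu+1}-1)-\alpha(n-2^{\nu+1}-2)=1-\nu(n-2^{\nu+1}-1)$, and one checks $\nu(n+1)=\nu(n-2^{\nu+1}-1)$ in both parity subcases (both equal $0$ when $\nu\geq 1$, both equal $1$ when $\nu=0$ and $n\equiv 1\bmod 4$). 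So the lower bound of your plan is sound.

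The gap is in the upper bound. You claim to ``express $\alpha(n+1-a)$ explicitly as a function of $\alpha(a)$ and the fixed low-order bits of $n+1$,'' but $\alpha(n+1-a)$ is not a function of $\alpha(a)$: with $n=192$ (so $n+1=11000001_2$, $\nu=6$), one has $\alpha(n+1-2)=7$ while $\alpha(n+1-4)=6$, although $\alpha(2)=\alpha(4)=1$. The value depends on where the bits of $a$ sit relative to the low-order window of $n+1$, and on whether a borrow propagates past position $\nu+1$; since $a$ ranges up to $2^{\nu+1}+2$, the number of essentially distinct configurations is not finite uniformly in $\nu$, so ``a finite elementary check in each parity sub-case'' does not follow from what you have written. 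What actually makes the argument uniform in $\nu$ in the paper is a change of variables (passing from $t=k+\ell$, $k$ to truncations $t_0$, $k_0$ supported on low-order bits, so that the high-order bits of $n$ contribute exactly $\alpha(n)-1$ to each $\alpha$-term when there is no borrow) together with an auxiliary carry bound (Lemma~\ref{aux2}: for $0\leq j<i\leq 2^{\mu+1}-1$ one has $\nu\binom{i}{j}\leq \mu+1-\nu(i-j)$), which handles the borrow case. Your plan would need to surface some equivalent of that carry bound to close the growing case analysis; without it, the step ``every such $(a,b)$ forces at least $\alpha(n)-1$ carries'' is unjustified. You also do not treat the two regimes $\nu(n)=0$ and $\nu(n)>0$ separately, whereas the paper handles $\nu(n)=0$ by a direct inspection of the nine admissible binomials and reserves the machinery for $\nu(n)>0$.
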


\begin{nota}\label{understood}{\em
The relevance of the hypothesis $\alpha(n+1)\geq\alpha(n)$ comes 
from~\cite[Example~5.9]{symmotion}: the
topological complexity of $L^{2n+3}(2^{\alpha(n)-1})$ is 
completely understood for $\alpha(n +1) < 
\alpha(n)$---in which case Theorem~\ref{fargra} is 
optimal. On the other hand, note that the condition
$\alpha(n+1)\geq\alpha(n)$ is equivalent to $\nu(n+1)\leq 1$, which holds
except for $n\equiv3\bmod4$.
}\end{nota}

The proof of Proposition~\ref{Compara} for $\nu(n)=0$ reduces to a
simple checking:
using the formulas $\nu\binom{a}{b}=\alpha(a-b)+\alpha(b)-\alpha(a)$
and $\alpha(k-1)=\alpha(k)-1+\nu(k)$ it is easy to check that 
$\nu\binom{2n+2}{n+1}$, $\nu\binom{2n+1}{n+1}$, $\nu\binom{2n}{n+1}$,
$\nu\binom{2n}{n}$, $\nu\binom{2n-1}{n+1}$, $\nu\binom{2n-1}{n}$,
$\nu\binom{2n-2}{n+1}$, $\nu\binom{2n-2}{n}$, and $\nu\binom{2n-2}{n-1}$
are all greater than or equal to $\alpha(n)-1$, while 
\begin{eqnarray*}
\nu\binom{2n-3}{n} & \!\!\!= &\!\!\!\alpha(n)+ \alpha(n-3)- \alpha(2(n-1)-1)
\hspace{3.65cm}\\
& \!\!\!=& \!\!\!\alpha(n)+ \alpha(n-2)-1 +\nu(n-2)-\alpha(n-1)-\nu(n-1)\\
& \!\!\!=& \!\!\!\rule{0mm}{5.2mm}\alpha(n)-2+\nu(n-2)\;=\;\alpha(n)-2.
\end{eqnarray*}
However, the argument for $\nu(n)>0$ is arithmetically cumbersome and, for the reader's benefit, we split the required verifications into a few preliminary steps. Note that, for $\nu(n)>0$, the inequality $$\max\left\{k+\ell \,:\, 0\leq k,\ell\leq n+1 \,\,\,\mathrm {and}\,\,\, 2^{\alpha(n)-1} \nmid \binom{k+\ell}{k} \right\}\leq 2n-2^{\nu(n)+1}-1$$ in Proposition~\ref{Compara} is a consequence of the following result:

\begin{lema}\label{com-fg}
Let $k,n,$ and $t$ be non-negative integers with $\alpha(n)\geq2$. 
The inequality
$\nu\binom{t}{k}\geq \alpha(n)-1$ holds provided
\begin{eqnarray}
&&\!\!\!\!\!\!\!\!\!\!\!\!\!\!\!\!\!\!\!\!\!\!\!\!\nu(n)>0,\hspace{6.45cm}\label{bin1}\\
&&\!\!\!\!\!\!\!\!\!\!\!\!\!\!\!\!\!\!\!\!\!\!\!\!2n-2^{\nu(n)+1}\leq t\leq 2n+2, \mbox{\,\,\, and }\hspace{3.7cm}\label{bin2}\\
&&\!\!\!\!\!\!\!\!\!\!\!\!\!\!\!\!\!\!\!\!\!\!\!\!\left[\frac{t+1}{2}\right]\leq k\leq n+1,\hspace{5.85cm} \label{bin3}
\end{eqnarray} 
\end{lema}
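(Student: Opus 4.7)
The plan rests on Kummer's theorem in the form $\nu\binom{t}{k}=\alpha(k)+\alpha(t-k)-\alpha(t)$ (the number of carries in the base-$2$ addition $k+(t-k)=t$), together with a convenient reparametrization. Setting $a=n+1-k$ and $b=n+1-(t-k)$, the hypotheses~(\ref{bin2}) and~(\ref{bin3}) translate into $0\le a\le b$ and $a+b\le 2^{\nu(n)+1}+2$, while the target inequality becomes
\[
\alpha(n+1-a)+\alpha(n+1-b)\ \ge\ \alpha(2n+2-a-b)+\alpha(n)-1.
\]

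The decisive preliminary observation, made possible by~(\ref{bin1}), is that $n+1$ is odd. Writing $n=2^{\nu(n)}m$ with $m$ odd, one gets $\alpha(n+1)=\alpha(n)+1$, and the binary expansion of $n+1$ has a $1$ at position $0$, zeroes at positions $1,\dots,\nu(n)-1$, and above position $\nu(n)$ it reproduces the bits of $n$. Because $a+b\le 2^{\nu(n)+1}+2$, the subtractions $n+1-a$ and $n+1-b$ (and the sum $2n+2-a-b=2(n+1)-(a+b)$) only disturb the bottom $\nu(n)+2$ bits; all $\alpha(n)-1$ set bits coming from the high part of $m$ are preserved in each of the three numbers. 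This immediately accounts for $\alpha(n)-1$ matching set bits on both sides of the target inequality, reducing the problem to a strictly local comparison of the bottom few bits.

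Next, I would split into the cases (i) $a+b\le 2^{\nu(n)+1}$ and (ii) $2^{\nu(n)+1}<a+b\le 2^{\nu(n)+1}+2$, and within each subdivide by the sizes of $a$ and $b$ relative to $2^{\nu(n)}$. In each subcase, the bits of $n+1-a$, $n+1-b$, and $2n+2-a-b$ in the affected window can be read off using the elementary identities $\alpha(u-1)=\alpha(u)-1+\nu(u)$ and $\nu\binom{a}{b}=\alpha(b)+\alpha(a-b)-\alpha(a)$ already invoked freely in the paper; the local check then boils down to confirming that the borrow chain in $n+1-b$ produces enough extra ones to compensate for any additional ones appearing in $2n+2-a-b$. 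The pattern mirrors the explicit verification done for $\nu(n)=0$ just before the lemma's statement.

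The main obstacle is purely bookkeeping: when $b$ approaches its maximum $2^{\nu(n)+1}+2$, the borrow chain in $n+1-b$ propagates past the $2^{\nu(n)}$-bit, and one has to distinguish whether it halts within the window or pushes into the next set bit of $m$ (which would alter the conserved $\alpha(n)-1$ count). This is exactly the ``arithmetically cumbersome'' step flagged in the paper. Concretely, I expect the endgame to be a table of five or six subcases (parametrized by $\min(a,2^{\nu(n)})$, $\min(b,2^{\nu(n)})$, and whether $a+b$ exceeds $2^{\nu(n)+1}$), in each of which the inequality collapses to a verification involving only constants and the single parameter $\nu(n)$.
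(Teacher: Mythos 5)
Your reparametrization $a=n+1-k$, $b=n+1-(t-k)$ is a legitimate (and different from the paper's) way to set up the problem, and the translation of the hypotheses to $0\le a\le b$, $a+b\le 2^{\nu(n)+1}+2$ is correct. But the ``decisive preliminary observation''---that the subtractions $n+1-a$, $n+1-b$, $2n+2-a-b$ only disturb the bottom $\nu(n)+2$ bits, so the $\alpha(n)-1$ high set bits of $n+1$ are preserved in all three numbers---is false, and it is the load-bearing step. Write $n=2^{\nu_0}+2^{\nu_1}+\cdots$ with $\nu_0=\nu(n)<\nu_1<\cdots$, so that $n+1=1+2^{\nu_0}+2^{\nu_1}+\cdots$ has low part $1+2^{\nu_0}$. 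Since $a\le 2^{\nu_0}+1$ (from $2a\le a+b$), the subtraction $n+1-a$ indeed stays in the low part. But $b$ can be as large as $2^{\nu_0+1}+2>1+2^{\nu_0}$, so $n+1-b$ \emph{must} borrow from bit $\nu_1$, which need not lie in the window $\{0,\dots,\nu(n)+1\}$. Concretely, for $n=18$ ($\nu_0=1$, $\nu_1=4$, $n+1=10011_2$) and $b=6$, one gets $n+1-b=13=01101_2$: bits $3$ and $4$ are both altered. Your own last paragraph concedes this (``propagates past the $2^{\nu(n)}$-bit $\ldots$ pushes into the next set bit of $m$''), but that directly contradicts the earlier accounting, and you never carry out the case analysis that would repair it---the proposal stops at ``I expect the endgame to be a table of five or six subcases.''

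By contrast, the paper strips the genuinely untouched high bits ($2^{\nu_2}+\cdots$ and above) from both $t$ and $k$, defining $t_0$, $k_0$, and splits on the sign of $k_0-t_0$. When $k_0\le t_0$ the claim is immediate ($\nu\binom{t}{k}=\nu\binom{t_0}{k_0}+\alpha(n)-1$). When $k_0>t_0$---precisely the regime where the borrow from bit $\nu_1$ occurs---the paper packages the borrow via the identity $\alpha(2^{\nu_1}-1-x)=\nu_1-\alpha(x)$ and reduces the whole inequality to the clean auxiliary fact (Lemma~\ref{aux2}, itself a one-line consequence of Kummer's theorem) that $\mu+1\ge\nu(i-j)+\nu\binom{i}{j}$ when $0\le j<i\le 2^{\mu+1}-1$. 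If you want to finish your version, you would need an analogous device to control exactly how far the borrow chain in $n+1-b$ travels and how it trades off against $\alpha(2n+2-a-b)$; as written the argument has a genuine gap.
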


In preparation for the proof of Lemma~\ref{com-fg}, we
introduce some supplementary notation which will be in force through the 
rest of the section. Set $\nu=\nu(n)$ and let
$n=2^{\nu_0}+2^{\nu_1}+\cdots$ with $\nu=\nu_0<\nu_1<\cdots$ be the
binary expansion of $n$. From~(\ref{bin2}) we have
$$2n-2^{\nu+1}=2^{\nu_1+1}+2^{\nu_2+1}+\cdots\leq t \leq 2+
2^{\nu_0+1}+2^{\nu_1+1}+\cdots \nonumber$$ and, if we set 
$t_0=t-(2^{\nu_1+1}+2^{\nu_2+1}+\cdots)$, we have
\begin{eqnarray}\label{m5}
0\leq t_0 \leq 2+ 2^{\nu_0+1}.
\end{eqnarray} Further, since $\nu_0\geq1$, it is
clear that, in the binary expansion of $t_0$, only powers
$2^{j}$ with $j\leq\nu_0+1$ are involved. In particular, from $t=t_0+
2^{\nu_1+1}+2^{\nu_2+1}+\cdots$ we have
\begin{eqnarray}\label{m6}
\alpha(t)=\alpha(t_0)+\alpha(2^{\nu_1+1}+2^{\nu_2+1}+\cdots)=
\alpha(t_0)+\alpha(n)-1.
\end{eqnarray} Likewise, if we set
$k_0=k-(2^{\nu_1}+2^{\nu_2}+\cdots)$, from (\ref{bin3}) one has
{\setlength\arraycolsep{2pt}
\begin{eqnarray}
\left[\frac{t_0+1}{2}\right]+2^{\nu_1}+2^{\nu_2}+\cdots &= &
\left[\frac{t_0+1+2^{\nu_1+1}+{}\cdots}{2}\right] \nonumber \\
& = & \left[\frac{t+1}{2}\right]\leq
k\leq 1+2^{\nu_0}+2^{\nu_1}+ {}\cdots,\nonumber
\end{eqnarray}} that is
\begin{eqnarray}\label{m7}
\left[\frac{t_0+1}{2}\right]\leq k_0\leq 1+2^{\nu_0}.
\end{eqnarray} 
Recalling again the assumption
$\nu_0\geq1$, it is now clear that, in the binary expansion of $k_0$ only \
powers $2^{j}$ with $j\leq\nu_0$ are involved. In particular, from $k=k_0+
2^{\nu_1}+2^{\nu_2}+\cdots$ we obtain
\begin{eqnarray}\label{m8}
\alpha(k)=\alpha(k_0)+\alpha(2^{\nu_1}+2^{\nu_2}+\cdots)=
\alpha(k_0)+\alpha(n)-1.
\end{eqnarray} 
Lastly, we claim that
\begin{equation}\label{aux1}
t_0-k_0<2^{\nu_0+1}.
\end{equation}
For, otherwise, from (\ref{m5}) and (\ref{m7}) we would have
\begin{eqnarray}\label{m9}
2+2^{\nu_0+1}\geq t_0\geq
k_0+2^{\nu_0+1}\geq\left[\frac{t_0+1}{2}\right]+2^{\nu_0+1},
\end{eqnarray} 
so that $2\geq\left[\frac{t_0+1}{2}\right]$, i.e.~$t_0\leq4$.
But~(\ref{m9}) also gives $$t_0\geq k_0+2^{\nu_0+1}\geq
2^{\nu_0+1}\geq2^2=4,$$ implying in fact $t_0=4$. However,
the inequality $t_0\geq \left[\frac{t_0+1}
{2}\right]+2^{\nu_0+1}$ in~(\ref{m9}) would now say $4\geq2+2^{\nu_0+1}\geq2+4=6$,
which is a contradiction.

\begin{proof}[Proof of Lemma~\emph{\ref{com-fg}}] 
From (\ref{m6})~and~(\ref{m8}) we obtain
\begin{eqnarray*}
\nu\mbox{$\binom{t}{k}$} & = &\alpha(k)+ \alpha(t-k)- \alpha(t)
\hspace{5.55cm}\\
& =& \alpha(k_0)+ \alpha(n)-1 + \alpha(t-k)-\alpha(t_0)-\alpha(n)+1\\
&= & \alpha(k_0)+ \alpha(t-k)-\alpha(t_0).
\end{eqnarray*}
Hence, taking into account the definitions of $k_0$ and $t_0$,
we have
\begin{eqnarray}\label{m10a}
\nu\mbox{$\binom{t}{k}$} & = & \alpha(k_0)+ \alpha(t_0-k_0+2^{\nu_1}+
2^{\nu_2} +\cdots)-\alpha(t_0).\hspace{0.35cm}
\end{eqnarray} 
At this point we consider two possibilities.

\medskip\noindent{\bf Case $k_0\leq t_0$}. $t_0-k_0$ is a non negative
integer whose binary expansion involves exclusively powers $2^{j}$ with
$j\leq\nu_0$, by virtue of~(\ref{aux1}). Thus~(\ref{m10a}) transforms into
\begin{eqnarray}
\nu\mbox{$\binom{t}{k}$} & = & \alpha(k_0)+
\alpha(t_0-k_0)+\alpha(2^{\nu_1}+ 2^{\nu_2} +\cdots)-\alpha(t_0)
\hspace{1.45cm}\nonumber\\
& =& \alpha(k_0)+ \alpha(t_0-k_0)-\alpha(t_0)+ \alpha(n)-1 \nonumber\\
&= & \nu\mbox{$\binom{t_0}{k_0}$}+ \alpha(n)-1\geq \alpha(n)-1,\nonumber
\end{eqnarray} as asserted by Lemma~\ref{com-fg}.

\medskip\noindent {\bf Case $k_0> t_0$}. $k_0 - t_0$ is a
positive integer whose binary expansion involves exclusively powers $2^{j}$
with $j\leq\nu_0$, by virtue of~(\ref{m7}). Thus, the central term
on the right hand side of (\ref{m10a}) becomes

\vspace{-6mm}
\begin{eqnarray*}
\alpha(t_0-k_0+ 2^{\nu_1}+2^{\nu_2} \cdots)& \!\!=\!\!
& \alpha(2^{\nu_2}+\cdots)+
\alpha(2^{\nu_1}-(k_0-t_0))\\
&\!\!=\!\! & \alpha(n)-2+ \alpha(2^{\nu_1}-1-(k_0-t_0-1))\\
&\!\!=\!\! & \alpha(n)-2+ \nu_1-\alpha(k_0-t_0-1)\\
&\!\!=\!\! & \alpha(n)-2+ \nu_1-(\alpha(k_0-t_0)-1+\nu(k_0-t_0)),
\end{eqnarray*} 

\vspace{-4mm}\noindent
so that
\begin{eqnarray*}\label{m10}
\nu\binom{t}{k} & = & \alpha(k_0)- \alpha(t_0)+\alpha(n)-2+\nu_1-
\alpha(k_0-t_0)+1-\nu(k_0-t_0)\\
&=& \alpha(k_0)- \alpha(t_0)-
\alpha(k_0-t_0)+\alpha(n)-1+\nu_1-\nu(k_0-t_0)\\
&=& -\nu\binom{k_0}{t_0}+\alpha(n)-1+\nu_1-\nu(k_0-t_0).
\end{eqnarray*}
Thus, under the present hypothesis (summarized as
$0\leq t_0 < k_0 \leq 1+ 2^{\nu_0}$ with $0<\nu_0<\nu_1$),
the conclusion of Lemma~\ref{com-fg} is equivalent to
$\nu_1\geq \nu(k_0-t_0)+ \nu\binom{k_0}{t_0}$. The proof is then 
complete in view of Lemma~\ref{aux2} below.
\end{proof}

\begin{lema}\label{aux2}
For integers $i,j,$ and $\mu$ with $\mu>0$ and $0\leq j<i\leq 2^{\mu+1}-1$, 
we have $\mu+1\geq \nu(i-j)+\nu\binom{i}{j}$.
\end{lema}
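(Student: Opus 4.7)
The strategy is to interpret $\nu\binom{i}{j}$ via Kummer's theorem as the number of carries occurring when one adds $j$ and $d:=i-j$ in base~$2$, and then to bound that carry count using the fact that $i$ (and hence $j$ and $d$) fit into at most $\mu+1$ binary digits, together with the low-order vanishing of $d$.

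The steps I would carry out are as follows. Write the standard bit-by-bit addition with carry sequence $c_{0}=0$ and $c_{k+1}=\lfloor(j_{k}+d_{k}+c_{k})/2\rfloor$, where $j_{k},d_{k}$ are the binary digits of $j$ and $d$, so that by Kummer $\nu\binom{i}{j}$ equals the number of indices $k\geq 1$ with $c_{k}=1$. First, setting $r:=\nu(i-j)$, the vanishing $d_{0}=\cdots=d_{r-1}=0$ forces $c_{1}=\cdots=c_{r}=0$ by an immediate induction. Second, the hypothesis $i\leq 2^{\mu+1}-1$ gives $j,d\leq i<2^{\mu+1}$, so the sum $j+d=i$ has no bit at position $\mu+1$; hence $c_{\mu+1}=0$. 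Together these confine the possible carry positions to $\{r+1,\ldots,\mu\}$, so $\nu\binom{i}{j}\leq \mu-r$, and therefore $\nu(i-j)+\nu\binom{i}{j}\leq \mu\leq \mu+1$, as required.

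I do not expect a serious obstacle: the computation is essentially a careful accounting of which bit positions can carry. The only point requiring a moment's care is the vanishing $c_{\mu+1}=0$, which uses the strict inequality $i<2^{\mu+1}$ built into the hypothesis. If one prefers to stay entirely within the paper's own conventions, Kummer's theorem may be replaced by the equivalent identity $\alpha(j)+\alpha(d)-\alpha(i)=\#\{\text{carries in }j+d\}$, which is already implicit in the formula $\nu\binom{a}{b}=\alpha(b)+\alpha(a-b)-\alpha(a)$ used throughout the section. The argument in fact yields the slightly sharper bound $\mu\geq \nu(i-j)+\nu\binom{i}{j}$, leaving one unit of slack relative to what Lemma~\ref{aux2} demands.
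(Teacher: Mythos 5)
Your argument is correct and follows the same route as the paper's: apply Kummer's theorem to read $\nu\binom{i}{j}$ as the number of binary carries in $j+(i-j)$, then bound the possible carry positions using $\nu(i-j)$ from below and the bit-length bound from above. Your bookkeeping is a touch sharper than the paper's (by observing that $c_{\mu+1}=0$ you exclude a carry at position $\mu+1$, getting $\mu\geq\nu(i-j)+\nu\binom{i}{j}$ rather than $\mu+1\geq\cdots$), but this extra unit of slack is immaterial for the lemma as stated.
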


\begin{proof} The hypothesis means that the binary expansions of $i,j,$ and
$i-j$ involve exclusively powers $2^\ell$ with $\ell\leq\mu$, say
$$
\begin{matrix}
i-j \;=\; 2^{i_0}+2^{i_1}+\cdots+2^{i_a},\\
\;\;\;\;\;\,j \;=\; 2^{j_0}+2^{j_1}+\cdots+2^{j_b},
\end{matrix}
$$
with $i_0<i_1<\cdots<i_a\leq\mu$ and $j_0<j_1<\cdots<j_b\leq\mu$, where
$i_0 =\nu (i-j) $. By Kummer's
theorem~\cite{Kummer}, the number $C$ of binary carries in the sum of 
$i-j$ and $j\,$ is $C=\nu\binom{i}{j}$. But the condition
$i_a\leq\mu$ implies that the maximum possible of such carries is
$\mu-(i_0-1)$. Thus, $\nu\binom{i}{j}=C\leq\mu-i_0+1,$ as asserted.
\end{proof}

\begin{proof}[Proof of Proposition~\emph{\ref{Compara}}] It suffices to check that
$\nu\binom{2n-2^{\nu(n)+1}-1}{n}<\alpha(n)-1$ for $\nu(n) > 0$. Setting as 
above $\nu=\nu(n)$, we have
\begin{eqnarray*}
\nu\mbox{$\binom{2n-2^{\nu+1}-1}{n}$} & = &\alpha(n)+
\alpha(n-2^{\nu+1}-1)- \alpha(2(n-2^{\nu})-1)\hspace{5mm}\\
& =& \alpha(n)+ \alpha(n-2^{\nu+1})-1+\nu(n-2^{\nu+1})\\
& & {}-\left(\alpha(n-2^{\nu})-1+\nu(2(n-2^{\nu}))
\rule{0mm}{4mm}\right)\\
& =& \alpha(n)+ \alpha(n-2^{\nu+1})-1+\nu(n-2^{\nu+1})\\
& & {}- \,\alpha(n-2^{\nu})-\nu(n-2^{\nu}).
\end{eqnarray*} 
Since $n=2^{\nu}a$, where $a$ is an odd
integer greater than 1 (recall that $\alpha(n)\geq2$), we now have
\begin{eqnarray*}
\nu\mbox{$\binom{2n-2^{\nu+1}-1}{n}$} & \!=\! &\alpha(2^{\nu}\cdot a)+
\alpha(2^{\nu}(a-2))-1+\nu(2^{\nu}(a-2))\\
& & {}-\alpha(2^{\nu}(a-1))-\nu(2^{\nu}(a-1))\\
& \!=\!& \alpha(a)+\alpha(a-2)-1+ \nu -\alpha(a-1)-\nu-
\nu(a-1)\\
& \!=\!&  \alpha(a)+\alpha(a-1)-1+\nu(a-1)-1 -\alpha(a-1)-\nu(a-1)\\
& \!=\!& \alpha(a)-2= \alpha(n)-2< \alpha(n)-1,
\end{eqnarray*}
as asserted.\end{proof}

We close the paper with indications on how to solve a gap in~\cite{G}---so to justify our use of~(\ref{valCT}). In Lemma~4.1 of that paper it is claimed that there is no $\mathbb{Z}_t$-axial map
\begin{equation}\label{axialinexistente}
\beta\colon L^{2m+1}(t)\times L^{2m+1}(t)\to L^{2m+1}(t)
\end{equation}
when $t>2$---that is, a map $\beta$ for which the diagram in Proposition~\ref{biequi} (with $\ell=m$, and where $2^e$ is replaced by $t$) is homotopy commutative. Such an assertion (which is well known to fail for $t=1$) is not well argued for a general $t$ in the proof of~\cite[Lemma~4.1]{G}: although the argument correctly shows that a $\mathbb{Z}_t$-axial map~(\ref{axialinexistente}) can exist only when both $t$ and $m+1$ are prime powers\footnote{Although strictly a different property, the converse of this assertion would seem to be related to the stable parallelizability of lens spaces (a well-understood property by~\cite{litang}), and to the existence of $H$-space multiplications on localized lens spaces~(a well-understood property by~\cite{browder,harper}).}, the incompatibility with the relation ``$x^2=0$'' asserted at the end of that argument (and which really meant to use cohomology with $\mathbb{Z}_t$-coefficients) overlooked the graded commutativity in the cohomology ring $$H^*(L^{2m+1}(t)\times L^{2m+1}(t);\mathbb{Z}_t)\cong H^*(L^{2m+1}(t);\mathbb{Z}_t)\otimes H^*(L^{2m+1}(t);\mathbb{Z}_t).$$ In any case, what is important for our purposes is that the above problem hurts no other result in~\cite{G}. Indeed, Lemma~4.1 in that paper is used only in the proof of its main Theorem~2.9---stated here as the estimates for $\varepsilon_{n,t}$ in~(\ref{valCT}). Now, the easy proof of the inequality $\varepsilon_{n,t}\geq0$ is given in~\cite[Lemma~3.1 and Proposition~3.2]{G} independently of the problematic Lemma~4.1. It is the inequality $\varepsilon_{n,t}\leq1$, asserted for even $t$, the one that makes use of the potentially faulty Lemma~4.1. However, as noted in~\cite[Proposition~24]{GV}, the argument in the proof of~\cite[Lemma~17]{GV} shows that~\cite[Lemma 4.1]{G} is true even when $t$ is a 2-power.

\bigskip

{\sc Departamento de Matem\'aticas

Centro de Investigaci\'on y de Estudios Avanzados del IPN

M\'exico City 07000}

{\it E-mail address:} {\bf jesus@math.cinvestav.mx}
 
\bigskip\medskip
 
{\sc Colegio de Ciencia y Tecnolog\'{\i}a

Universidad Aut\'onoma de la Ciudad de M\'exico

Mexico City 09790}

{\it E-mail address:} {\bf maurilio.velasco.fuentes@uacm.edu.mx}
 
\bigskip\medskip
{\sc Mathematics Department

Johns Hopkins University

Baltimore, Maryland 21218}
 
{\it E-mail address}: {\bf wsw@math.jhu.edu}

\end{document}